\theoremstyle{plain}
\newtheorem{theorem}[subsection]{Theorem}
\newtheorem{lemma}[subsection]{Lemma}
\newtheorem{corollary}[subsection]{Corollary}
\theoremstyle{definition}
\newtheorem{definition}[subsection]{Definition}
\newtheorem{situation}[subsection]{Situation}
\theoremstyle{remark}
\newtheorem{remark}[subsection]{Remark}
\numberwithin{equation}{section}
\begin{document}

\title{Invariant rings through categories}

\author[Alper]{Jarod Alper}
\author[de Jong]{A. J. de Jong}

\address{Department of Mathematics\\
Columbia University\\
2990 Broadway\\
New York, NY 10027}
\email{jarod@math.columbia.edu}

\address{Department of Mathematics\\
Columbia University\\
2990 Broadway\\
New York, NY 10027}
\email{dejong@math.columbia.edu}

\maketitle

\begin{abstract}
We formulate a notion of ``geometric reductivity" in an abstract categorical setting which we refer to as adequacy.  The main theorem states that the adequacy condition implies that the ring of invariants is finitely generated.  This result applies to the category of modules over a bialgebra,
the category of comodules over a bialgebra, and the category
of quasi-coherent sheaves on a finite type algebraic stack over an
affine base. 
\end{abstract}

\setcounter{tocdepth}{1}
\tableofcontents

\section{Introduction}
\label{section-introduction}

\noindent
A fundamental theorem in invariant theory states that if a reductive group
$G$ over a field $k$ acts on a finitely generated $k$-algebra $A$, then the
ring of invariants $A^G$ is finitely generated over $k$
(see \cite[Appendix 1.C]{git3}).  Mumford's conjecture, proven by Haboush
in \cite{haboush}, states that reductive groups are geometrically
reductive; therefore this theorem is reduced to showing that the ring
of invariants under an action by a geometrically reductive group is
finitely generated, which was originally proved by Nagata in
\cite{nagata_invariants-affine}.  

\medskip\noindent
Nagata's theorem has been generalized to various settings.  Seshadri showed
an analogous result for an action of a ``geometrically reductive" group
scheme over a universally Japanese base scheme
(see \cite{seshadri_reductivity}).  In \cite{ferrer-santos}, the result
is generalized to an action of a ``geometrically reductive" commutative
Hopf algebra over a field on a coalgebra.  In \cite{kalniuk-tyc}, an
analogous result is proven for an action of  a ``geometrically reductive"
(non-commutative) Hopf algebra over a field on an algebra.  In
\cite{alper_good_arxiv} and \cite{alper_adequate}, analogous results are
 shown for the invariants of certain pre-equivalence relations; 
 moreover, \cite{alper_adequate} systematically develops the theory of
 adequacy for algebraic stacks.

\medskip\noindent
These settings share a central underlying ``adequacy'' property which we
formulate in an abstract categorical setting. Namely, consider a homomorphism
of commutative rings $R \to A$. Consider an $R$-linear $\otimes$-category
$\mathcal{C}$ with a faithful exact $R$-linear $\otimes$-functor
$$
F : \mathcal{C} \longrightarrow \text{Mod}_A
$$
such that $\mathcal{C}$ is endowed with a ring object
$\mathcal{O} \in \text{Ob}(\mathcal{C})$ which is a unit for
$\otimes$. For precise definitions, please see
Situation \ref{situation-category}.
One can then define
$$
\Gamma : \mathcal{C} \longrightarrow \text{Mod}_R,
\quad
\mathcal{F}
\longmapsto
\text{Mor}_{\mathcal{C}}(\mathcal{O}, \mathcal{F}).
$$
%$\Gamma(\mathcal{F}) = \text{Mor}_{\mathcal{C}}(\mathcal{O}, \mathcal{F})$.
Adequacy means (roughly) in this setting that $\Gamma$ satisfies: if
$\mathcal{A} \to \mathcal{B}$ is a surjection of commutative ring objects
and if $f \in \Gamma(\mathcal{B})$, then there exists
$g \in \Gamma(\mathcal{A})$ with $g \mapsto f^n$ for some $n > 0$.
The main theorem of this paper is Theorem \ref{theorem-finite-type} 
which states (roughly)
that if $\Gamma$ is adequate, then
\begin{enumerate}
\item $\Gamma(\mathcal{A})$ is of finite type over $R$
if $\mathcal{A}$ is of finite type, and
\item $\Gamma(\mathcal{F})$ is a finite type $\Gamma(\mathcal{A})$-module
if $\mathcal{F}$ is of finite type.
\end{enumerate}
Note that additional assumptions have to be imposed on
the categorical setting in order to even formulate the result.

\medskip\noindent
In the final sections of this paper, we show how the abstract categorical
setting applies to (a) the category of modules over a bialgebra,
(b) the category of comodules over a bialgebra, and (c) the category
of quasi-coherent sheaves on a finite type algebraic stack over an
affine base. Thus the main theorem above unifies 
and generalizes the results mentioned
above, which was the original motivation for this research.   

\medskip\noindent
What is lacking in this theory is a practical criterion for adequacy.
Thus we would like to ask the following questions: Is there is notion
of reductivity in the categorical setting? Is there an abstract
analogue of Haboush's theorem? We hope to return to these question in
future research.

\medskip\noindent
\subsection*{Conventions} Rings are associative with $1$.
Abelian categories are additive categories with kernels and cokernels
such that $\text{Im} \cong \text{Coim}$ for any morphism.

\section{Setup}
\label{section-setup}

\noindent
In this section, we introduce the types of structure we are going to work
with. We keep the list of basic properties to an absolute minimum, and later
we introduce additional axioms to impose.

\begin{situation}
\label{situation-category}
We consider the following systems of data:
\begin{enumerate}
\item $R \to A$ is a map of commutative rings,
\item $\mathcal{C}$ is an $R$-linear abelian category,
\item $\otimes : \mathcal{C} \times \mathcal{C} \to \mathcal{C}$
is an $R$-bilinear functor,
\item $F : \mathcal{C} \to \text{Mod}_A$ is a faithful exact $R$-linear
functor,
\item there is a given bifunctorial isomorphism
$$
\gamma_{\mathcal{F}, \mathcal{G}} :
F(\mathcal{F}) \otimes_A F(\mathcal{G})
\longrightarrow
F(\mathcal{F} \otimes \mathcal{G}),
$$
\item there exist functorial isomorphisms
$$
\tau_{\mathcal{F}, \mathcal{G}, \mathcal{H}} :
(\mathcal{F} \otimes \mathcal{G}) \otimes \mathcal{H}
\longrightarrow
\mathcal{F} \otimes (\mathcal{G} \otimes \mathcal{H})
$$
which are compatible with the usual associativity
of tensor products of $A$-modules via $\gamma$, and
\item there is an object $\mathcal{O}$ of $\mathcal{C}$
endowed with functorial isomorphisms
$\mu : \mathcal{O} \otimes \mathcal{F} \to \mathcal{F}$, and
$\mu : \mathcal{F} \otimes \mathcal{O} \to \mathcal{F}$
such that $F(\mathcal{O}) = A$ and the isomorphisms
correspond to the usual isomorphisms $A \otimes_A M = M \otimes_A A = M$
(via $\gamma$ above).
\end{enumerate}
\end{situation}

\noindent
If an associativity constraint $\tau$ as above exists,
then it is uniquely determined by the condition that it agrees with
the usual associativity constraint for $A$-modules (as $F$ is faithful).
Hence we often do not list it as part of the data, and we say
``Let $(R \to A, \mathcal{C}, \otimes, F, \gamma, \mathcal{O}, \mu)$
be as in Situation \ref{situation-category}''.

\medskip\noindent
Note that in particular $\mathcal{O} \otimes \mathcal{O} = \mathcal{O}$,
and hence that $\mathcal{O}$ is a ring object of $\mathcal{C}$ (see
Section \ref{section-ring}), and for
this ring structure every object of $\mathcal{C}$ is in a canonical way
an $\mathcal{O}$-module.

\begin{definition}
\label{definition-global-sections}
In the situation above we define the {\it global sections functor}
to be the functor
$$
\Gamma : \mathcal{C} \longrightarrow \text{Mod}_R,
\quad
\mathcal{F}
\longmapsto
\Gamma(\mathcal{F}) = \text{Mor}_{\mathcal{C}}(\mathcal{O}, \mathcal{F}).
$$
\end{definition}

\noindent
Note that $\Gamma(\mathcal{F}) \subset F(\mathcal{F})$
since the functor $F$ is faithful. There are canonical maps
$
\Gamma(\mathcal{F}) \otimes_R \Gamma(\mathcal{G})
\longrightarrow
\Gamma(\mathcal{F} \otimes \mathcal{G})
$
defined by mapping the pure tensor $f \otimes g$ to the map
$$
\mathcal{O} = \mathcal{O} \otimes \mathcal{O}
\xrightarrow{f \otimes g}
\mathcal{F} \otimes_R \mathcal{G}
$$
For any pair of objects $\mathcal{F}, \mathcal{G}$ of $\mathcal{C}$
there is a commutative diagram
$$
\xymatrix{
\Gamma(\mathcal{F}) \otimes_R \Gamma(\mathcal{G}) \ar[d] \ar[r] &
\Gamma(\mathcal{F} \otimes \mathcal{G}) \ar[d] \\
F(\mathcal{F}) \otimes_A F(\mathcal{G}) \ar@{=}[r] &
F(\mathcal{F} \otimes \mathcal{G})
}
$$
In particular, there is a natural $\Gamma(\mathcal{O})$-module
structure on $\Gamma(\mathcal{F})$ for every object $\mathcal{F}$
of $\mathcal{C}$.

\section{Axioms}
\label{section-axioms}

\noindent
The following axioms will be introduced throughout the text.  
For the convenience of the reader, we list them here.

\begin{definition}
\label{definition-axioms}
Let
$(R \to A, \mathcal{C}, \otimes, F, \gamma, \mathcal{O}, \mu)$
be as in Situation \ref{situation-category}. We introduce the following
axioms:
\begin{enumerate}
\item[(D)] The category $\mathcal{C}$ has arbitrary direct
summands, and $\otimes$, $F$, and $\Gamma$ commute with these.
\item[(C)] There exist functorial isomorphisms
$\sigma_{\mathcal{F}, \mathcal{G}} : \mathcal{F} \otimes \mathcal{G}
\longrightarrow \mathcal{G} \otimes \mathcal{F}$
such that $\sigma_{\mathcal{F}, \mathcal{G}}$ is via $F$ and $\gamma$
compatible with the usual commutativity constraint
$M \otimes_A N \cong N \otimes_A M$ on $A$-modules.
\item[(I)] The category $\mathcal{C}$ has arbitrary direct
products, and $F$ commutes with them.
\item[(S)] For every object $\mathcal{F}$ of $\mathcal{C}$ and any
$n \geq 1$ there exists a quotient
$$
\mathcal{F}^{\otimes n} \longrightarrow \text{Sym}^n_{\mathcal{C}}(\mathcal{F})
$$
such that the map of $A$-modules
$
F(\mathcal{F}^{\otimes n})
\longrightarrow 
F(\text{Sym}^n_{\mathcal{C}}(\mathcal{F}))
$
factors through the natural surjection
$F(\mathcal{F})^{\otimes n} \to \text{Sym}^n_A(F(\mathcal{F}))$, and
such that $\text{Sym}^n_{\mathcal{C}}(\mathcal{F})$ is universal with this
property.
\item[(L)] Every object $\mathcal{F}$ of $\mathcal{C}$ is a filtered
colimit $\mathcal{F} = \text{colim}\ \mathcal{F}_i$
of finite type objects $\mathcal{F}_i$ such that
$F(\mathcal{F}) = \text{colim}\ F(\mathcal{F}_i)$.
\item[(N)] The ring $A$ is Noetherian.
\item[(G)] The functor $\Gamma$ is exact.
\item[(A)] For every surjection of weakly commutative ring objects 
$\mathcal{A} \to \mathcal{B}$ in $\mathcal{C}$ with $\mathcal{A}$ locally
finite, and any
$f \in \Gamma(\mathcal{B})$, there exists an $n > 0$ and an element
$g \in \Gamma(\mathcal{A})$ such that $g \mapsto f^n$ in $\Gamma(\mathcal{B})$. \end{enumerate}
\end{definition}

\noindent
Terminology used above:
An object $\mathcal{F}$ of $\mathcal{C}$ is \emph{finite type}
if $F(\mathcal{F})$ is finite type, see Definition \ref{definition-properties}.
A ring object $\mathcal{A}$, see Definition \ref{definition-ring},
is \emph{weakly commutative} if $F(\mathcal{A})$ is commutative,
see Definition \ref{definition-weakly-commutative-ring}.
An object $\mathcal{F}$ of $\mathcal{C}$ is \emph{locally finite}
if it is a filtered colimit $\mathcal{F} = \text{colim}\ \mathcal{F}_i$
of finite type objects $\mathcal{F}_i$ such that also
$F(\mathcal{F}) = \text{colim}\ F(\mathcal{F}_i)$,
see Definition \ref{definition-locally-finite}.

\section{Direct summands}
\label{section-direct-summands}

\noindent
We cannot prove much without the following axiom.

\begin{definition}
\label{definition-direct-summands}
Let $(R \to A, \mathcal{C}, \otimes, F, \gamma, \mathcal{O}, \mu)$
be as in Situation \ref{situation-category}. We introduce the following
axiom:
\begin{enumerate}
\item[(D)] The category $\mathcal{C}$ has arbitrary direct
summands, and $\otimes$, $F$, and $\Gamma$ commute with these.
\end{enumerate}
\end{definition}

\noindent
This implies that $\mathcal{C}$ has colimits and that
$\otimes$, $F$ and $\Gamma$ commute with these.

\begin{lemma}
\label{lemma-adjoint}
Assume that we are in Situation \ref{situation-category} and that the axiom
(D) holds. Then $\Gamma$ has a left adjoint 
$$
\mathcal{O} \otimes_R - : 
\text{Mod}_R 
\longrightarrow
\mathcal{C}
$$ 
with $\mathcal{O} \otimes_R R \cong \mathcal{O}$, and
$F(\mathcal{O} \otimes_R M) = A \otimes_R M$. Moreover, for any object
$\mathcal{F}$ of $\mathcal{C}$ there is a canonical isomorphism
$\mathcal{F} \otimes (\mathcal{O} \otimes_R M) =
(\mathcal{O} \otimes_R M) \otimes \mathcal{F}$ which reduces to
the obvious isomorphism on applying $F$.
\end{lemma}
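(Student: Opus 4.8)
The plan is to build the functor $\mathcal{O} \otimes_R -$ by hand from presentations, verifying the adjunction by matching universal properties, so that functoriality in $M$ and independence of all choices follow formally from the Yoneda lemma. First I would treat free modules: for a set $I$ put $\mathcal{O} \otimes_R R^{(I)} := \mathcal{O}^{(I)}$, the direct sum of copies of $\mathcal{O}$ indexed by $I$, which exists by axiom (D). The universal property of the direct sum gives, for every object $\mathcal{F}$ of $\mathcal{C}$, a natural $R$-linear bijection
$$
\text{Mor}_{\mathcal{C}}(\mathcal{O}^{(I)}, \mathcal{F})
= \prod\nolimits_I \text{Mor}_{\mathcal{C}}(\mathcal{O}, \mathcal{F})
= \prod\nolimits_I \Gamma(\mathcal{F})
= \text{Hom}_R(R^{(I)}, \Gamma(\mathcal{F})),
$$
which is the desired adjunction for free modules. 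Since $F$ commutes with direct sums (axiom (D)) and $F(\mathcal{O}) = A$, we get $F(\mathcal{O}^{(I)}) = A^{(I)} = A \otimes_R R^{(I)}$, compatibly with $\gamma$; taking $I$ a one-element set gives $\mathcal{O} \otimes_R R \cong \mathcal{O}$.

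Next, for a general $R$-module $M$ I would choose a presentation $R^{(J)} \xrightarrow{\phi} R^{(I)} \to M \to 0$ and define $\mathcal{O} \otimes_R M$ to be the cokernel in $\mathcal{C}$ of the map $\mathcal{O}^{(J)} \to \mathcal{O}^{(I)}$ induced by $\phi$ (cokernels exist since $\mathcal{C}$ is abelian). Contravariant left exactness of $\text{Mor}_{\mathcal{C}}(-, \mathcal{F})$, the previous step, and left exactness of $\text{Hom}_R(-, \Gamma(\mathcal{F}))$ together identify $\text{Mor}_{\mathcal{C}}(\mathcal{O} \otimes_R M, \mathcal{F})$ with the kernel of $\text{Hom}_R(R^{(I)}, \Gamma(\mathcal{F})) \to \text{Hom}_R(R^{(J)}, \Gamma(\mathcal{F}))$, that is, with $\text{Hom}_R(M, \Gamma(\mathcal{F}))$, naturally in $\mathcal{F}$. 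Hence $\mathcal{O} \otimes_R M$ represents $\mathcal{F} \mapsto \text{Hom}_R(M, \Gamma(\mathcal{F}))$; by Yoneda it is independent of the presentation up to unique isomorphism, it is functorial in $M$, and the resulting functor $\text{Mod}_R \to \mathcal{C}$ is left adjoint to $\Gamma$. Applying the exact, direct-sum-preserving functor $F$ to the defining cokernel sequence yields $F(\mathcal{O} \otimes_R M) = \text{coker}(A^{(J)} \to A^{(I)}) = A \otimes_R M$.

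For the last assertion I would use that, by the remark following axiom (D), the functors $\mathcal{F} \otimes -$ and $- \otimes \mathcal{F}$ commute with all colimits, in particular with direct sums and cokernels. Combined with the unit isomorphisms $\mu$, which give $\mathcal{F} \otimes \mathcal{O} \cong \mathcal{F} \cong \mathcal{O} \otimes \mathcal{F}$, this produces canonical isomorphisms $\mathcal{F} \otimes \mathcal{O}^{(I)} \cong \mathcal{F}^{(I)} \cong \mathcal{O}^{(I)} \otimes \mathcal{F}$ that are natural in $I$; applying $\mathcal{F} \otimes -$ and $- \otimes \mathcal{F}$ to the cokernel presentation of $\mathcal{O} \otimes_R M$ then gives the canonical isomorphism $\mathcal{F} \otimes (\mathcal{O} \otimes_R M) = (\mathcal{O} \otimes_R M) \otimes \mathcal{F}$. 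Applying $F$ throughout, using $\gamma$ and the compatibility of $\mu$ with the standard unit isomorphisms $A \otimes_A N = N \otimes_A A = N$, shows that this isomorphism reduces to the obvious one on $A$-modules.

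I expect the only genuine content to be the representability argument in the second paragraph; everything else is bookkeeping. The point that needs the most care is the compatibility of all constructions with $F$, $\gamma$, and $\mu$ — in particular, in the last step, checking that the isomorphisms $\mathcal{F} \otimes \mathcal{O}^{(I)} \cong \mathcal{F}^{(I)} \cong \mathcal{O}^{(I)} \otimes \mathcal{F}$ are compatible with the transition maps induced by $\phi$, so that they descend to the cokernels.
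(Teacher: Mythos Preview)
Your proposal is correct and follows essentially the same approach as the paper: define $\mathcal{O} \otimes_R M$ as the cokernel of $\mathcal{O}^{(J)} \to \mathcal{O}^{(I)}$ coming from a presentation of $M$, read off the adjunction from the universal properties of cokernels and direct sums, and compute $F(\mathcal{O} \otimes_R M)$ by exactness and commutation with direct sums. The paper's version is simply terser---it omits the separate treatment of the free case and leaves both the verification of the adjunction and the final commutativity statement to the reader---so your write-up is a faithful expansion of what the authors had in mind.
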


\begin{proof}
For any $R$-module $M$ choose a presentation
$\bigoplus_{j \in J} R \to \bigoplus_{i \in I} R \to M \to 0$
and define
$$
\mathcal{O} \otimes_R M =
\text{Coker}(\bigoplus\nolimits_{j \in J} \mathcal{O} 
\longrightarrow
\bigoplus\nolimits_{i \in I} \mathcal{O})
$$
where the arrow is given by the same matrix as the matrix used in the
presentation for $M$. With this definition it is clear that
$F(\mathcal{O} \otimes_R M) = A \otimes_R M$. Moreover, since there
is an exact sequence
$$
\bigoplus\nolimits_{j \in J} \mathcal{O} 
\longrightarrow
\bigoplus\nolimits_{i \in I} \mathcal{O}
\longrightarrow
\mathcal{O} \otimes_R M \longrightarrow 0
$$
it is straightforward to verify that
$\text{Mor}_\mathcal{C}(\mathcal{O} \otimes_R M, \mathcal{F})
= \text{Mor}_R(M, \Gamma(\mathcal{F}))$. We leave the
proof of the last statement to the reader.
\end{proof}

\noindent
In the situation of the lemma we will write $M \otimes_R \mathcal{F}$
instead of the more clumsy notation
$M \otimes_R \mathcal{O} \otimes \mathcal{F}$.

\begin{remark}
\label{remark-commutes}
Let $(R \to A, \mathcal{C}, \otimes, F, \gamma, \mathcal{O}, \mu)$
be as in Situation \ref{situation-category}, and further
assume (D) holds. By Lemma \ref{lemma-adjoint} above,
we have a diagram of functors
$$
\xymatrix{
\text{Mod}_{R} 
\ar@/^/[rr]^-{\mathcal{O} \otimes_{R} -}
\ar[rrd]_{A \otimes_R -}
& &
\mathcal{C}
\ar[d]^-F
\ar@/^/[ll]^-\Gamma \\
& & \text{Mod}_A
}
$$
where $F \circ (\mathcal{O} \otimes_{R} -) = (A \otimes_{R} -)$, and
$\mathcal{O} \otimes_{R} -$ is a left adjoint to $\Gamma$.
\end{remark}

\section{Commutativity}
\label{section-commutative}

\begin{definition}
\label{definition-commutative}
Let $(R \to A, \mathcal{C}, \otimes, F, \gamma, \mathcal{O}, \mu)$
be as in Situation \ref{situation-category}. We introduce the following
axiom:
\begin{enumerate}
\item[(C)] There exist functorial isomorphisms
$\sigma_{\mathcal{F}, \mathcal{G}} : \mathcal{F} \otimes \mathcal{G}
\longrightarrow \mathcal{G} \otimes \mathcal{F}$
such that $\sigma_{\mathcal{F}, \mathcal{G}}$ is via $F$ and $\gamma$
compatible with the usual commutativity constraint
$M \otimes_A N \cong N \otimes_A M$ on $A$-modules.
\end{enumerate}
\end{definition}

\noindent
As in the case of the associativity constraint, if such maps
$\sigma_{\mathcal{F}, \mathcal{G}}$ exist, then they are unique.

\section{Direct products}
\label{section-direct-products}

\begin{definition}
\label{definition-direct-products}
Let $(R \to A, \mathcal{C}, \otimes, F, \gamma, \mathcal{O}, \mu)$
be as in Situation \ref{situation-category}. We introduce the following
axiom:
\begin{enumerate}
\item[(I)] The category $\mathcal{C}$ has arbitrary direct
products, and $F$ commutes with them.
\end{enumerate}
\end{definition}

\noindent
If this is the case, then the category $\mathcal{C}$ has inverse limits
and the functor $F$ commutes with them, which is why we use the letter (I)
to indicate this axiom.

\medskip\noindent
In the following lemma and its proof we will use the following
abuse of notation. Suppose that $\mathcal{F}$, $\mathcal{G}$ are
two objects of $\mathcal{C}$, and that
$\alpha : F(\mathcal{F}) \to F(\mathcal{G})$ is an $A$-module map.
We say that $\alpha$ is a {\it morphism of $\mathcal{C}$} if
there exists a morphism $a : \mathcal{F} \to \mathcal{G}$ in $\mathcal{C}$
such that $F(a) = \alpha$. Note that if $a$ exists it is unique.

\begin{lemma}
\label{lemma-quotient}
Assume we are in Situation \ref{situation-category} and that (I) holds.
Let $\mathcal{F}$, $\mathcal{G}$ be two objects of $\mathcal{C}$.
Let $\alpha : F(\mathcal{F}) \to F(\mathcal{G})$ be an $A$-module
map. The functor
$$
\mathcal{C} \longrightarrow \textit{Sets},\quad
\mathcal{H} \longmapsto
\{\varphi \in \text{Mor}_\mathcal{C}(\mathcal{G}, \mathcal{H})
\mid
F(\varphi) \circ \alpha\text{ is a morphism of }\mathcal{C}\}
$$
is representable. The universal object $\mathcal{G} \to \mathcal{G}'$
is a surjection.
\end{lemma}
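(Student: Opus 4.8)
The plan is to construct $\mathcal{G}'$ as a suitable subobject (the largest one on which $\alpha$ lands) via a direct-product construction, using axiom (I) to build it. First I would consider the collection of all pairs $(\mathcal{H}, \varphi)$ with $\varphi : \mathcal{G} \to \mathcal{H}$ a surjection such that $F(\varphi) \circ \alpha$ is a morphism of $\mathcal{C}$; up to isomorphism there is only a set of such quotients, since each is determined by the $A$-submodule $\ker(F(\varphi)) \subset F(\mathcal{G})$. Let $\{(\mathcal{H}_t, \varphi_t)\}_{t \in T}$ be a set of representatives. Form $\mathcal{P} = \prod_{t \in T} \mathcal{H}_t$, which exists by (I), and note $F(\mathcal{P}) = \prod_t F(\mathcal{H}_t)$. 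The maps $\varphi_t$ assemble into $\varphi = (\varphi_t) : \mathcal{G} \to \mathcal{P}$, and I would take $\mathcal{G}'$ to be the image of $\varphi$ in the abelian category $\mathcal{C}$, with $q : \mathcal{G} \to \mathcal{G}'$ the induced surjection. Since $F$ is exact, $F(\mathcal{G}') = \operatorname{Im}(F(\varphi)) \subset \prod_t F(\mathcal{H}_t)$, and because $F(\varphi) \circ \alpha = (F(\varphi_t) \circ \alpha)_t$ is a morphism of $\mathcal{C}$ componentwise (each factors as $F$ of an actual map into $\mathcal{H}_t$), the product map $\prod_t (\text{that map}) : \mathcal{F} \to \mathcal{P}$ witnesses that $F(\varphi) \circ \alpha$ is a morphism of $\mathcal{C}$; and it factors through $\mathcal{G}' \hookrightarrow \mathcal{P}$ at the level of $A$-modules, hence — since $\mathcal{G}' \to \mathcal{P}$ is a monomorphism in $\mathcal{C}$ and $F$ is faithful exact — it is a morphism of $\mathcal{C}$ landing in $\mathcal{G}'$. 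Thus $F(q) \circ \alpha$ is a morphism of $\mathcal{C}$, so $(\mathcal{G}', q)$ lies in the functor's value at $\mathcal{G}'$.

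Next I would check the universal property. Given any $\mathcal{H}$ and any $\psi : \mathcal{G} \to \mathcal{H}$ in $\mathcal{C}$ with $F(\psi) \circ \alpha$ a morphism of $\mathcal{C}$, I want a unique $\bar\psi : \mathcal{G}' \to \mathcal{H}$ with $\bar\psi \circ q = \psi$. Uniqueness is immediate since $q$ is an epimorphism. For existence, first replace $\mathcal{H}$ by the image of $\psi$: if $\psi' : \mathcal{G} \to \mathcal{H}'$ is the surjection onto $\operatorname{Im}(\psi)$ and $\iota : \mathcal{H}' \hookrightarrow \mathcal{H}$ the monomorphism, then $F(\psi') \circ \alpha$ is still a morphism of $\mathcal{C}$ (it equals $F(\psi) \circ \alpha$ composed, on $A$-modules, through the injection $F(\iota)$; since $F(\iota)$ identifies $F(\mathcal{H}')$ with a submodule and the map $\mathcal{F} \to \mathcal{H}$ realizing $F(\psi)\circ\alpha$ has $F$-image inside it, faithful exactness gives a map $\mathcal{F} \to \mathcal{H}'$). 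So $(\mathcal{H}', \psi')$ is isomorphic to one of the chosen representatives $(\mathcal{H}_{t_0}, \varphi_{t_0})$, and hence the projection $\operatorname{pr}_{t_0} : \mathcal{P} \to \mathcal{H}_{t_0}$ restricted to $\mathcal{G}' \subset \mathcal{P}$ gives, after composing with the isomorphism and $\iota$, the desired $\bar\psi$; that $\operatorname{pr}_{t_0}|_{\mathcal{G}'} \circ q = \varphi_{t_0}$ holds because it does after applying $F$ and $F$ is faithful. Finally $q$ is a surjection by construction (it is an image map onto $\mathcal{G}'$, hence an epimorphism, and "surjection" in this abelian category means epimorphism).

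The main obstacle I anticipate is the bookkeeping around the phrase "is a morphism of $\mathcal{C}$": one must repeatedly pass between an $A$-module map $F(\mathcal{F}) \to F(\text{something})$ and an actual arrow in $\mathcal{C}$, and check that these passages are compatible with composition, with the monomorphisms coming from images, and with products. The key technical facts I will lean on are that $F$ is faithful and exact (so images in $\mathcal{C}$ map to images of $A$-modules, and a map whose $F$-image factors through a subobject lifts to a map into that subobject), and that (I) gives the product $\mathcal{P}$ with $F(\mathcal{P}) = \prod_t F(\mathcal{H}_t)$. A secondary subtlety is the set-theoretic point that the isomorphism classes of relevant quotients of $\mathcal{G}$ form a set — this is fine because any such quotient is, via $F$, determined by an $A$-submodule of the fixed set $F(\mathcal{G})$, and there is only a set of those.
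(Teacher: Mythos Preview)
Your proposal is correct and follows essentially the same approach as the paper: reduce to surjective quotients, take the product over (a set of representatives of) all such quotients using axiom (I), and define $\mathcal{G}'$ as the image of $\mathcal{G}$ in that product. The paper's proof is considerably terser---it simply writes down $\mathcal{G}' = \text{Im}(\mathcal{G} \to \prod_{\pi \in T} \mathcal{H}_\pi)$ and says ``the rest is clear''---whereas you have carefully spelled out the set-theoretic point, the verification that $F(q)\circ\alpha$ is a morphism of $\mathcal{C}$, and the universal property, all of which are indeed routine given the faithful exactness of $F$.
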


\begin{proof}
Since $\mathcal{C}$ is abelian, any morphism
$\pi : \mathcal{G} \to \mathcal{H}$ factors uniquely as
$\mathcal{G} \to \mathcal{H}' \to \mathcal{H}$ where
the first map $\pi'$ is a surjection and the second is
an injection. If $F(\pi) \circ \alpha = F(a)$ is a morphism of
$\mathcal{C}$, then $a$ factors through $\mathcal{H}'$ and
we see that $F(\pi') \circ \alpha$ is a morphism of $\mathcal{C}$.
Hence it suffices to consider surjections. Consider the set
$T = \{\pi : \mathcal{G} \to \mathcal{H}_\pi\}$ of surjections
$\pi$ such that $F(\pi) \circ \alpha$ is a morphism of
$\mathcal{C}$. Set
$$
\mathcal{G}'
=
\text{Im}(\mathcal{G} \longrightarrow
\prod\nolimits_{\pi \in T} \mathcal{H}_\pi).
$$
The rest is clear.
\end{proof}

\section{Symmetric products}
\noindent
We introduce the axiom (S) and show that either axiom (I) or (C) implies (S).

\begin{definition}
\label{definition-internal-symmetric}
Let $(R \to A, \mathcal{C}, \otimes, F, \gamma, \mathcal{O}, \mu)$
be as in Situation \ref{situation-category}. We introduce the following
axiom:
\begin{enumerate}
\item[(S)] For every object $\mathcal{F}$ of $\mathcal{C}$ and any
$n \geq 1$ there exists a quotient
$$
\mathcal{F}^{\otimes n} \longrightarrow \text{Sym}^n_{\mathcal{C}}(\mathcal{F})
$$
such that the map of $A$-modules
$
F(\mathcal{F}^{\otimes n})
\longrightarrow 
F(\text{Sym}^n_{\mathcal{C}}(\mathcal{F}))
$
factors through the natural surjection
$F(\mathcal{F})^{\otimes n} \to \text{Sym}^n_A(F(\mathcal{F}))$, and
such that $\text{Sym}^n_{\mathcal{C}}(\mathcal{F})$ is universal with this
property.
\end{enumerate}
\end{definition}

\noindent
Note that if axiom (S) holds, then the universality implies the rule
$\mathcal{F} \leadsto \text{Sym}^n_{\mathcal{C}}(\mathcal{F})$ is a functor.
Moreover, for every $n, m \geq 0$ there are canonical maps
$$
\text{Sym}^n_{\mathcal{C}}(\mathcal{F})
\otimes
\text{Sym}^m_{\mathcal{C}}(\mathcal{F})
\longrightarrow
\text{Sym}^{n + m}_{\mathcal{C}}(\mathcal{F}).
$$
If axiom (D) holds as well, then this will turn
$\bigoplus_{n \geq 0} \text{Sym}^n_{\mathcal{C}}(\mathcal{F})$ into
a weakly commutative ring object of $\mathcal{C}$ (see Definitions
\ref{definition-ring} and \ref{definition-weakly-commutative-ring} below).

\begin{lemma}
\label{lemma-symmetric}
In Situation \ref{situation-category}, if either axiom (C) or (I) holds, then axiom (S) holds.
\end{lemma}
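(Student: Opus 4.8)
The plan is to give two constructions, one using axiom (C) and one using axiom (I); in each case I produce $\text{Sym}^n_{\mathcal{C}}(\mathcal{F})$ as an explicit quotient of $\mathcal{F}^{\otimes n}$ and verify the two required properties using that $F$ is exact and faithful. Write $\pi : F(\mathcal{F}^{\otimes n}) = F(\mathcal{F})^{\otimes n} \to \text{Sym}^n_A(F(\mathcal{F}))$ for the canonical surjection; the content is to build the universal quotient $q : \mathcal{F}^{\otimes n} \to \mathcal{Q}$ among those for which $F(q)$ factors through $\pi$.

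Assume (C). Using the constraints $\tau$ and $\sigma$, each permutation $w \in S_n$ gives rise to an automorphism $\rho_w$ of $\mathcal{F}^{\otimes n}$ in $\mathcal{C}$: built as a composite of instances of $\tau^{\pm 1}$ and $\sigma^{\pm 1}$, its image $F(\rho_w)$ is the corresponding tensor-factor permutation automorphism $\rho_w^{\mathrm{std}}$ of $F(\mathcal{F})^{\otimes n}$ (here one uses that $\tau$ and $\sigma$ are compatible with the standard constraints on $A$-modules), and $\rho_w$ is the unique such morphism of $\mathcal{C}$ because $F$ is faithful. Since $\mathcal{C}$ is abelian and $S_n$ is finite, I may form
$$
\text{Sym}^n_{\mathcal{C}}(\mathcal{F}) \;=\; \text{Coker}\Bigl( \bigoplus\nolimits_{w \in S_n} \mathcal{F}^{\otimes n} \xrightarrow{\ (\rho_w - \text{id})_w\ } \mathcal{F}^{\otimes n} \Bigr).
$$
Exactness of $F$ identifies $F$ of the quotient map with the cokernel of $(\rho_w^{\mathrm{std}} - \text{id})_w$ on $F(\mathcal{F})^{\otimes n}$, which is precisely $\pi$; in particular $F$ of the quotient map factors through $\pi$. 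For universality, if $q : \mathcal{F}^{\otimes n} \to \mathcal{Q}$ is any quotient with $F(q)$ factoring through $\pi$, then $F(q) \circ (F(\rho_w) - \text{id}) = 0$ for all $w$, hence $q \circ (\rho_w - \text{id}) = 0$ by faithfulness, hence $q$ factors through $\text{Sym}^n_{\mathcal{C}}(\mathcal{F})$, uniquely since $\mathcal{F}^{\otimes n} \to \text{Sym}^n_{\mathcal{C}}(\mathcal{F})$ is an epimorphism.

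Assume (I). I mimic the proof of Lemma \ref{lemma-quotient}. Let $T$ be the set of isomorphism classes of surjections $q : \mathcal{F}^{\otimes n} \to \mathcal{Q}_q$ in $\mathcal{C}$ for which $F(q)$ factors through $\pi$ (this is a set, since surjective quotients of a fixed object form a set), and put
$$
\text{Sym}^n_{\mathcal{C}}(\mathcal{F}) \;=\; \text{Im}\Bigl( \mathcal{F}^{\otimes n} \longrightarrow \prod\nolimits_{q \in T} \mathcal{Q}_q \Bigr),
$$
using axiom (I). Since $F$ is exact and commutes with products, $F$ of this is the image of $F(\mathcal{F})^{\otimes n}$ in $\prod_q F(\mathcal{Q}_q)$, and since each $F(q)$ factors through $\pi$ so does this map, giving the first property. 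Every $q' \in T$ is dominated by $\text{Sym}^n_{\mathcal{C}}(\mathcal{F})$ via the relevant projection (restricted to the image), and an arbitrary quotient with $F$ factoring through $\pi$ reduces to an element of $T$ after passing to its surjective part; hence $\text{Sym}^n_{\mathcal{C}}(\mathcal{F})$ is universal.

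The verifications I expect to be routine are the coherence bookkeeping that produces the $\rho_w$ in case (C) — all of it forced by the corresponding facts for $A$-modules via faithfulness of $F$ — and the set-theoretic point in case (I). If there is a main obstacle it is the minor one of checking, in case (C), that $F$ of the quotient map is literally the standard symmetrization map $\pi$ rather than merely a map factoring through it; but this follows at once from $F(\rho_w) = \rho_w^{\mathrm{std}}$ together with exactness of $F$.
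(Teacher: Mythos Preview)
Your proof is correct and follows essentially the same approach as the paper. In case (C) your cokernel construction is identical to the paper's; in case (I) the paper cites Lemma~\ref{lemma-quotient}, whereas you reproduce that lemma's image-in-the-product construction directly for the specific condition ``$F(q)$ factors through $\pi$'', which is arguably cleaner since that condition is not literally of the form treated in Lemma~\ref{lemma-quotient}.
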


\begin{proof}
Suppose (C) holds.  If $\mathcal{F}$ is an object of $\mathcal{C}$, using the 
maps $\sigma_{\mathcal{F}, \mathcal{F}}$ we get an action of
the symmetric group $S_n$ on $n$ letters on $\mathcal{F}^{\otimes n}$
(to see that it is an action of $S_n$ apply the faithful functor $F$).
Thus, $\text{Sym}_{\mathcal{C}}^n(\mathcal{F})$ can be defined as the cokernel of
a map
$$
\bigoplus\nolimits_{\tau \in S_n} \mathcal{F}^{\otimes n}
\longrightarrow
\mathcal{F}^{\otimes n}
$$
where in the summand corresponding to $\tau$ we use the difference
of the identity and the map corresponding to $\tau$.

\medskip \noindent
Suppose (I) holds.  Let $\mathcal{F}$ be an object of $\mathcal{C}$. The quotient
$\mathcal{F}^{\otimes n} \to \text{Sym}^n_{\mathcal{C}}(\mathcal{F})$
is characterized by the property that if
$a : \mathcal{F}^{\otimes n} \to \mathcal{G}$ is a map such that $F(a)$
factors through
$F(\mathcal{F})^{\otimes n} \to \text{Sym}^n_A(F(\mathcal{F}))$
then $a$ factors in $\mathcal{C}$ through the map to
$\text{Sym}^n_{\mathcal{C}}(\mathcal{F})$. To prove such a quotient exists
apply Lemma \ref{lemma-quotient} to the map
$$
\bigoplus\nolimits_{\tau \in S_n} F(\mathcal{F})^{\otimes n}
\longrightarrow
F(\mathcal{F})^{\otimes n}
$$
mentioned above.
\end{proof}

\section{Ring objects}
\label{section-ring}

\begin{definition}
\label{definition-ring}
Let $(R \to A, \mathcal{C}, \otimes, F, \gamma, \mathcal{O}, \mu)$
be as in Situation \ref{situation-category}.
\begin{enumerate}
\item A {\it ring object} $\mathcal{A}$ in $\mathcal{C}$ consists
of an object $\mathcal{A}$ of $\mathcal{C}$ endowed with maps
$\mathcal{O} \to \mathcal{A}$ and
$\mu_\mathcal{A} : \mathcal{A} \otimes \mathcal{A} \to \mathcal{A}$
which on applying $F$ induce an $A$-algebra structure on $F(\mathcal{A})$.
\item If $\mathcal{A}$ is a ring object of $\mathcal{C}$, then a
{\it (left) module object} over $\mathcal{A}$ is an object $\mathcal{F}$
endowed with a morphism
$\mu_\mathcal{F} : \mathcal{A} \otimes \mathcal{F} \to \mathcal{F}$
such that $F(\mathcal{A}) \otimes_A F(\mathcal{F}) \to F(\mathcal{F})$
induces an $F(\mathcal{A})$-module structure on $F(\mathcal{F})$.
\end{enumerate}
\end{definition}

\noindent
If $\mathcal{A}$ is a ring object of $\mathcal{C}$, then $\Gamma(\mathcal{A})$
inherits an $R$-algebra structure in a natural manner. In other words,
we have the following diagram of rings
$$
\xymatrix{
R \ar[rr] \ar[d] & & A \ar[d] \\
\Gamma(\mathcal{O}) \ar[r] &
\Gamma(\mathcal{A}) \ar[r] &
F(\mathcal{A})
}
$$
In the same vein, given a $\mathcal{A}$-module $\mathcal{F}$ the global
sections $\Gamma(\mathcal{F})$ are a $\Gamma(\mathcal{A})$-module in a
natural way.
Let $\text{Mod}_\mathcal{A}$ denote the category of $\mathcal{A}$-modules.

\begin{lemma} 
\label{lemma-module-objects-abelian}
Let $(R \to A, \mathcal{C}, \otimes, F, \gamma, \mathcal{O}, \mu)$
be as in Situation \ref{situation-category}.
If $\mathcal{A}$ is a ring object in $\mathcal{C}$, then the category 
$\text{Mod}_{\mathcal{A}}$ is abelian.
\end{lemma}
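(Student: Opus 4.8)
The plan is to reduce everything to the corresponding statement for modules over the ring $F(\mathcal{A})$ via the faithful exact functor $F$. Concretely, the category $\text{Mod}_{\mathcal{A}}$ is by construction a (non-full) subcategory of $\mathcal{C}$, and composing the forgetful functor $\text{Mod}_{\mathcal{A}} \to \mathcal{C}$ with $F$ lands in $\text{Mod}_{F(\mathcal{A})} \subset \text{Mod}_A$, since each $\mathcal{A}$-module object $\mathcal{F}$ has $F(\mathcal{F})$ an $F(\mathcal{A})$-module by definition, and morphisms of module objects go (via $F$) to $F(\mathcal{A})$-linear maps. So I get a faithful functor $G : \text{Mod}_{\mathcal{A}} \to \text{Mod}_{F(\mathcal{A})}$. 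First I would check that $\text{Mod}_{\mathcal{A}}$ is an additive category: the zero object $\mathcal{O} \to \mathcal{A}$-acting-trivially, and finite direct sums, are inherited from $\mathcal{C}$ because $F$ carries the module structure maps to the evident module structure maps on direct sums in $\text{Mod}_A$, and one checks the module axioms after applying the faithful $F$.

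Next I would construct kernels and cokernels. Given a morphism $a : \mathcal{F} \to \mathcal{G}$ in $\text{Mod}_{\mathcal{A}}$, form $\mathcal{K} = \ker(a)$ and $\mathcal{Q} = \operatorname{coker}(a)$ in the abelian category $\mathcal{C}$. I claim each carries a unique $\mathcal{A}$-module structure making the canonical maps $\mathcal{K} \to \mathcal{F}$ and $\mathcal{G} \to \mathcal{Q}$ into morphisms of $\text{Mod}_{\mathcal{A}}$. For the kernel: the composite $\mathcal{A} \otimes \mathcal{K} \to \mathcal{A} \otimes \mathcal{F} \xrightarrow{\mu_{\mathcal{F}}} \mathcal{F}$ becomes, after applying $F$ (which is exact, so $F(\mathcal{A} \otimes \mathcal{K}) = F(\mathcal{A}) \otimes_A F(\mathcal{K})$ and $F(\mathcal{K}) = \ker F(a)$), the restriction of the $F(\mathcal{A})$-action to the submodule $\ker F(a)$, which lands in $\ker F(a)$; hence the map factors through $\mathcal{K} \to \mathcal{F}$ in $\mathcal{C}$ because $F$ is faithful and exact (the factorization exists on the level of $A$-modules and is pulled back to $\mathcal{C}$ since $\mathcal{K} = \ker(a)$ is a subobject and the composite $\mathcal{A}\otimes\mathcal{K}\to\mathcal{F}\to\mathcal{G}$ is zero, as one checks by applying $F$). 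This gives $\mu_{\mathcal{K}} : \mathcal{A} \otimes \mathcal{K} \to \mathcal{K}$, and the module axioms hold because they hold after applying the faithful $F$. The cokernel case is dual: $\mathcal{A} \otimes \mathcal{G} \xrightarrow{\mu_{\mathcal{G}}} \mathcal{G} \to \mathcal{Q}$ kills the image of $\mathcal{A} \otimes \mathcal{F}$ (check via $F$), so it factors through $\mathcal{A} \otimes \mathcal{G} \to \mathcal{A} \otimes \mathcal{Q}$ — here I use right-exactness of $\mathcal{A} \otimes -$, which follows from exactness of $F$ and the isomorphism $\gamma$ — giving $\mu_{\mathcal{Q}}$.

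Finally I would verify that a morphism $a$ in $\text{Mod}_{\mathcal{A}}$ has $\operatorname{Im} \cong \operatorname{Coim}$. Both the image and coimage are computed in $\mathcal{C}$, where the isomorphism $\operatorname{Coim}(a) \xrightarrow{\sim} \operatorname{Im}(a)$ already exists since $\mathcal{C}$ is abelian; by the kernel/cokernel constructions above these objects carry canonical $\mathcal{A}$-module structures, and the canonical iso between them is automatically $\mathcal{A}$-linear, again by faithfulness of $F$ (its image under $F$ is the identity-type iso of $F(\mathcal{A})$-modules). The main obstacle — the only place requiring genuine care rather than routine diagram-chasing — is the construction of the module structures on kernels and cokernels, specifically producing the factorizations $\mathcal{A} \otimes \mathcal{K} \to \mathcal{K}$ and $\mathcal{A} \otimes \mathcal{G} \to \mathcal{A} \otimes \mathcal{Q}$ inside $\mathcal{C}$: this needs both that $F$ is exact (to identify $F(\mathcal{A} \otimes -)$ with $F(\mathcal{A}) \otimes_A F(-)$ via $\gamma$ and to see the relevant composites vanish) and that $F$ is faithful (to descend the $A$-module factorization back to $\mathcal{C}$). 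Once those structure maps exist, all remaining verifications — associativity and unitality of the actions, linearity of the structural morphisms, and $\operatorname{Im}\cong\operatorname{Coim}$ — reduce to identities that hold after applying the faithful functor $F$ and so hold in $\mathcal{C}$.
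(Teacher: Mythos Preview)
Your proposal is correct and follows essentially the same approach as the paper: take kernels and cokernels in $\mathcal{C}$, equip them with $\mathcal{A}$-module structures by factoring the action maps (using that the composite $\mathcal{A}\otimes\mathcal{K}\to\mathcal{G}$ vanishes for the kernel, and right-exactness of $\mathcal{A}\otimes -$ deduced via $F$ and $\gamma$ for the cokernel), and check the remaining axioms by faithfulness of $F$. You are in fact a bit more thorough than the paper, which omits the verification that these really are the kernel and cokernel in $\text{Mod}_{\mathcal{A}}$ and does not explicitly address $\operatorname{Im}\cong\operatorname{Coim}$.
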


\begin{proof}
Let $\varphi : \mathcal{F} \to \mathcal{G}$ be a map of $\mathcal{A}$-modules.
Set $\mathcal{K} = \text{Ker}(\varphi)$ and
$\mathcal{Q} = \text{Coker}(\varphi)$ in $\mathcal{C}$.
We claim that both $\mathcal{K}$
and $\mathcal{Q}$ have natural $\mathcal{A}$-module structure that turn
them into the kernel and cokernel of $\varphi$ in $\text{Mod}_\mathcal{A}$.
To see this for $\mathcal{K}$ consider the map
$$
\mathcal{A} \otimes \mathcal{K} \to
\mathcal{A} \otimes \mathcal{F} \to
\mathcal{F}
$$
Its composition with the map to $\mathcal{G}$ is zero as $\varphi$ is a
map of $\mathcal{A}$-modules. Hence we see that it factors into a map
$\mathcal{A} \otimes \mathcal{K} \to \mathcal{K}$. To get the module structure
for $\mathcal{Q}$, note that the sequence
$$
\mathcal{A} \otimes \mathcal{F} \to
\mathcal{A} \otimes \mathcal{G} \to
\mathcal{A} \otimes \mathcal{Q} \to 0
$$
is exact, because it is exact on applying $F$. Hence the module structure
on $\mathcal{G}$ induces one on $\mathcal{Q}$. We omit checking that these
structures do indeed give the kernel and cokernel of $\varphi$
in $\text{Mod}_\mathcal{A}$.
\end{proof}

\noindent
Let us use $\text{Hom}_\mathcal{A}(-, -)$ for the morphisms in the 
category $\text{Mod}_{\mathcal{A}}$. Note that
$$
\Gamma(\mathcal{F})
=
\text{Mor}_\mathcal{C}(\mathcal{O}, \mathcal{F})
=
\text{Hom}_\mathcal{A}(\mathcal{A}, \mathcal{F})
$$
for $\mathcal{F} \in \text{Mod}_\mathcal{A}$. The map from the left to
the right associates to $f : \mathcal{O} \to \mathcal{F}$ the map
$$
\mathcal{A} = \mathcal{A} \otimes \mathcal{O}
\xrightarrow{1 \otimes f} \mathcal{A} \otimes \mathcal{F}
\xrightarrow{\mu_\mathcal{F}} \mathcal{F}.
$$

\begin{lemma}
\label{lemma-adjoint-ring}
In Situation \ref{situation-category} assume axiom (D) and let
$\mathcal{A}$ be a ring object in $\mathcal{C}$. Then the functor
$$
\Gamma :
\text{Mod}_\mathcal{A}
\longrightarrow
\text{Mod}_{\Gamma(\mathcal{A})}
$$
has a right adjoint
$$
\mathcal{A} \otimes_{\Gamma(\mathcal{A})} - :
\text{Mod}_{\Gamma(\mathcal{A})}
\longrightarrow
\text{Mod}_\mathcal{A}.
$$
We have
$\mathcal{A} \otimes_{\Gamma(\mathcal{A})} \Gamma(\mathcal{A}) = \mathcal{A}$
and
$F(\mathcal{A} \otimes_{\Gamma(\mathcal{A})} M) =
F(\mathcal{A}) \otimes_{\Gamma(\mathcal{A})} M$.
\end{lemma}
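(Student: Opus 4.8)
The plan is to imitate the construction in Lemma~\ref{lemma-adjoint}, but carried out one level up, inside the abelian category $\text{Mod}_\mathcal{A}$ (Lemma~\ref{lemma-module-objects-abelian}) instead of inside $\text{Mod}_R$. First I would record that under axiom (D) the category $\text{Mod}_\mathcal{A}$ has arbitrary direct sums computed in $\mathcal{C}$: for $\mathcal{A}$-modules $\mathcal{F}_i$ the object $\bigoplus \mathcal{F}_i$ of $\mathcal{C}$ carries an $\mathcal{A}$-module structure via $\mathcal{A}\otimes\bigoplus\mathcal{F}_i = \bigoplus(\mathcal{A}\otimes\mathcal{F}_i)\to\bigoplus\mathcal{F}_i$, using that $\otimes$ commutes with direct sums; together with cokernels this gives all colimits in $\text{Mod}_\mathcal{A}$. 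I would also note that each $a\in\Gamma(\mathcal{A}) = \text{Hom}_\mathcal{A}(\mathcal{A},\mathcal{A})$ (the identification recalled just before the lemma) is by construction an $\mathcal{A}$-linear endomorphism of $\mathcal{A}$, so a matrix with entries in $\Gamma(\mathcal{A})$ defines a morphism in $\text{Mod}_\mathcal{A}$ between direct sums of copies of $\mathcal{A}$.

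Then I would define the functor. For a $\Gamma(\mathcal{A})$-module $M$ choose a presentation $\bigoplus_{j\in J}\Gamma(\mathcal{A}) \xrightarrow{\Phi} \bigoplus_{i\in I}\Gamma(\mathcal{A}) \to M \to 0$ and set
$$
\mathcal{A}\otimes_{\Gamma(\mathcal{A})} M = \text{Coker}\Bigl(\bigoplus\nolimits_{j\in J}\mathcal{A} \xrightarrow{\Phi} \bigoplus\nolimits_{i\in I}\mathcal{A}\Bigr)
$$
in $\text{Mod}_\mathcal{A}$, where $\Phi$ now denotes the morphism given by the same matrix. Applying $F$, which is exact and (by (D)) commutes with direct sums, to this right exact sequence immediately yields $F(\mathcal{A}\otimes_{\Gamma(\mathcal{A})} M) = F(\mathcal{A})\otimes_{\Gamma(\mathcal{A})} M$, since the analogous presentation computes the ordinary tensor product of modules.

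The heart of the matter is the adjunction. For $\mathcal{F}\in\text{Mod}_\mathcal{A}$ apply the left exact functor $\text{Hom}_\mathcal{A}(-,\mathcal{F})$ to the defining presentation; using that it turns direct sums into products and that $\text{Hom}_\mathcal{A}(\mathcal{A},\mathcal{F}) = \Gamma(\mathcal{F})$, one gets an exact sequence
$$
0 \to \text{Hom}_\mathcal{A}(\mathcal{A}\otimes_{\Gamma(\mathcal{A})} M,\mathcal{F}) \to \prod\nolimits_{i\in I}\Gamma(\mathcal{F}) \xrightarrow{\Phi} \prod\nolimits_{j\in J}\Gamma(\mathcal{F}).
$$
Applying $\text{Hom}_{\Gamma(\mathcal{A})}(-,\Gamma(\mathcal{F}))$ to the same presentation of $M$ produces the identical left exact sequence, this time with kernel $\text{Hom}_{\Gamma(\mathcal{A})}(M,\Gamma(\mathcal{F}))$. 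Hence $\mathcal{A}\otimes_{\Gamma(\mathcal{A})} M$ represents the functor $\mathcal{F}\mapsto\text{Hom}_{\Gamma(\mathcal{A})}(M,\Gamma(\mathcal{F}))$ on $\text{Mod}_\mathcal{A}$; by Yoneda this makes the construction independent of the chosen presentation, functorial in $M$, and gives the canonical adjunction isomorphism
$$
\text{Hom}_\mathcal{A}(\mathcal{A}\otimes_{\Gamma(\mathcal{A})} M,\mathcal{F}) \cong \text{Hom}_{\Gamma(\mathcal{A})}(M,\Gamma(\mathcal{F})).
$$
Taking $M = \Gamma(\mathcal{A})$ the represented functor becomes $\Gamma = \text{Hom}_\mathcal{A}(\mathcal{A},-)$, so $\mathcal{A}\otimes_{\Gamma(\mathcal{A})}\Gamma(\mathcal{A}) = \mathcal{A}$ by Yoneda again, and $F(\mathcal{A}\otimes_{\Gamma(\mathcal{A})} M) = F(\mathcal{A})\otimes_{\Gamma(\mathcal{A})} M$ was shown above.

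The main obstacle is bookkeeping rather than anything conceptual: one must confirm that direct sums in $\text{Mod}_\mathcal{A}$ behave as claimed and that a matrix over $\Gamma(\mathcal{A})$ really gives an $\mathcal{A}$-linear map (both reduce to applying the faithful exact functor $F$), and one must check that the two left exact sequences above are identified compatibly, i.e. that the $\Gamma(\mathcal{A})$-module structure on $\Gamma(\mathcal{F})$ appearing on the one side is exactly the one through which $\Phi$ acts on the other — this is the module structure described just before the lemma, so once it is spelled out the identification is essentially tautological.
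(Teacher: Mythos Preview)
Your proposal is correct and is exactly the approach the paper intends: its proof simply says ``identical to the argument of Lemma~\ref{lemma-adjoint} using that $\Gamma(\mathcal{F}) = \text{Hom}_\mathcal{A}(\mathcal{A},\mathcal{F})$,'' and you have spelled out precisely that argument in $\text{Mod}_\mathcal{A}$ rather than $\mathcal{C}$. (Incidentally, the adjunction you derive shows $\mathcal{A}\otimes_{\Gamma(\mathcal{A})}-$ is \emph{left} adjoint to $\Gamma$, matching Lemma~\ref{lemma-adjoint}; the word ``right'' in the statement is a slip in the paper.)
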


\begin{proof}
The proof is identical to the argument of Lemma \ref{lemma-adjoint} using that
$\Gamma(\mathcal{F}) = \text{Hom}_\mathcal{A}(\mathcal{A}, \mathcal{F})$
for any $\mathcal{A}$-module $\mathcal{F}$.
\end{proof}

\begin{remark}
\label{remark-polynomial-rings}
Let $(R \to A, \mathcal{C}, \otimes, F, \gamma, \mathcal{O}, \mu)$
be as in Situation \ref{situation-category}. Assume axiom (D).
Let $\mathcal{A}$ be a ring object, and let $S$ be a set.
We can define the {\it polynomial algebra over $\mathcal{A}$}
as the ring object
$$
\mathcal{A}[x_s; s \in S] = 
\mathcal{A} \otimes_{\Gamma(\mathcal{A})} (\Gamma(\mathcal{A})[x_s; s \in S])
$$
Explicitly $\mathcal{A}[x_s; s \in S] =  \bigoplus\nolimits_I \mathcal{A}x^I$
where $I$ runs over all functions $I : S \to \mathbf{Z}_{\geq 0}$
with finite support. The symbol $x^I = \prod x_s^{I(s)}$ indicates
the corresponding monomial. The multiplication on
$\mathcal{A}[x_s; s \in S]$ is defined by requiring the ``elements''
of $\mathcal{A}$ to commute with the variables $x_s$. 

A homomorphism
$\mathcal{A}[x_s; s \in S] \to \mathcal{B}$ of ring objects is
given by a homomorphism $\mathcal{A} \to \mathcal{B}$ of ring objects
together with some elements $y_s \in \Gamma(\mathcal{B})$ which commute with
all elements in the image of $F(\mathcal{A}) \to F(\mathcal{B})$.
%Some details omitted.
\end{remark}

\section{Commutative ring objects and modules}
\label{section-commutative-ring}

\noindent
%Here is the naive definition.

\begin{definition}
\label{definition-weakly-commutative-ring}
Let $(R \to A, \mathcal{C}, \otimes, F, \gamma, \mathcal{O}, \mu)$
be as in Situation \ref{situation-category}.
A ring object $\mathcal{A}$ is called {\it weakly commutative} if
$F(\mathcal{A})$ is commutative.
\end{definition}

\begin{lemma}
\label{lemma-quotient-weakly-commutative}
In Situation \ref{situation-category}.
If $\mathcal{A}$ is a weakly commutative ring
and $\mathcal{I} \subset \mathcal{A}$ is a left ideal,
then $\mathcal{I}$ is a two-sided ideal and $\mathcal{A}/\mathcal{I}$
is a weakly commutative ring.
\end{lemma}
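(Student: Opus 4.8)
The plan is to exploit that $F$ is faithful and exact, so that the kernels, cokernels, and factorizations we need in $\mathcal{C}$ can all be verified after applying $F$, where everything reduces to elementary statements about ideals in the commutative ring $F(\mathcal{A})$. Two consequences of the hypotheses on $F$ will be used repeatedly: (i) a morphism $g$ of $\mathcal{C}$ with $F(g) = 0$ is itself zero, so a composite in $\mathcal{C}$ vanishes as soon as it vanishes after applying $F$; and (ii) a faithful exact functor reflects isomorphisms, since if $F(g)$ is an isomorphism then $F(\text{Ker}(g)) = \text{Ker}(F(g)) = 0$ and $F(\text{Coker}(g)) = 0$, while an object $\mathcal{Z}$ with $F(\mathcal{Z}) = 0$ has $\text{id}_\mathcal{Z} = 0$ by faithfulness, hence $\mathcal{Z} = 0$.

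First I would set up notation: let $\iota : \mathcal{I} \to \mathcal{A}$ be the inclusion, so $\mathcal{I} = \text{Ker}(\pi)$ for $\pi : \mathcal{A} \to \mathcal{A}/\mathcal{I}$, and let $\mu_\mathcal{I} : \mathcal{A} \otimes \mathcal{I} \to \mathcal{I}$ be the left module structure, which by the definition of a left ideal satisfies $\iota \circ \mu_\mathcal{I} = \mu_\mathcal{A} \circ (\text{id} \otimes \iota)$. Since $\mathcal{I}$ is a left $\mathcal{A}$-module and $F$ a functor, $F(\mathcal{I})$ is an $F(\mathcal{A})$-submodule of $F(\mathcal{A})$, hence an ideal of the commutative ring $F(\mathcal{A})$. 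To produce a right action, consider the composite $\mathcal{I} \otimes \mathcal{A} \xrightarrow{\iota \otimes \text{id}} \mathcal{A} \otimes \mathcal{A} \xrightarrow{\mu_\mathcal{A}} \mathcal{A} \xrightarrow{\pi} \mathcal{A}/\mathcal{I}$; after applying $F$ its image lies in the ideal $F(\mathcal{I}) \cdot F(\mathcal{A}) \subseteq F(\mathcal{I})$, so by (i) the composite is zero, and since $\mathcal{I} = \text{Ker}(\pi)$ the morphism $\mu_\mathcal{A} \circ (\iota \otimes \text{id})$ factors as $\iota \circ \rho$ for a unique $\rho : \mathcal{I} \otimes \mathcal{A} \to \mathcal{I}$. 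Applying $F$, the map $F(\rho)$ is (right) multiplication restricted to $F(\mathcal{I})$, so the right-module and bimodule axioms for $\rho$ hold after $F$ and therefore in $\mathcal{C}$ by (i); thus $\mathcal{I}$ is a two-sided ideal.

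It remains to make $\mathcal{A}/\mathcal{I}$ a weakly commutative ring object. The key point is that $(\mathcal{A}/\mathcal{I}) \otimes (\mathcal{A}/\mathcal{I})$ is the cokernel in $\mathcal{C}$ of $(\text{id}\otimes\iota,\ \iota\otimes\text{id}) : (\mathcal{A}\otimes\mathcal{I}) \oplus (\mathcal{I}\otimes\mathcal{A}) \to \mathcal{A}\otimes\mathcal{A}$: since $\otimes$ is $R$-bilinear, $\pi\otimes\pi$ kills this map, giving a canonical comparison morphism from the cokernel computed in $\mathcal{C}$ to $(\mathcal{A}/\mathcal{I})\otimes(\mathcal{A}/\mathcal{I})$, and applying $F$ turns it into the comparison map $\text{Coker}\bigl(F(\mathcal{A})\otimes_A F(\mathcal{I}) \oplus F(\mathcal{I})\otimes_A F(\mathcal{A}) \to F(\mathcal{A})\otimes_A F(\mathcal{A})\bigr) \to F(\mathcal{A}/\mathcal{I})\otimes_A F(\mathcal{A}/\mathcal{I})$, which is an isomorphism by right-exactness of $\otimes_A$, hence the comparison morphism in $\mathcal{C}$ is an isomorphism by (ii). Now $\pi\circ\mu_\mathcal{A}$ vanishes on $\mathcal{A}\otimes\mathcal{I}$ (because $\mu_\mathcal{A}\circ(\text{id}\otimes\iota) = \iota\circ\mu_\mathcal{I}$ factors through $\mathcal{I}$) and on $\mathcal{I}\otimes\mathcal{A}$ (by the previous paragraph), so it factors through a morphism $\bar\mu : (\mathcal{A}/\mathcal{I})\otimes(\mathcal{A}/\mathcal{I}) \to \mathcal{A}/\mathcal{I}$. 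Together with the unit $\mathcal{O}\to\mathcal{A}\xrightarrow{\pi}\mathcal{A}/\mathcal{I}$, the pair $(\mathcal{A}/\mathcal{I},\bar\mu)$ satisfies the ring-object axioms, because applying $F$ turns them into the statement that $F(\mathcal{A})/F(\mathcal{I})$ is a ring and $F$ is faithful; as $F(\mathcal{A}/\mathcal{I}) = F(\mathcal{A})/F(\mathcal{I})$ is commutative, $\mathcal{A}/\mathcal{I}$ is weakly commutative. The only step that is not a routine transport of structure along $F$ is this identification of $(\mathcal{A}/\mathcal{I})\otimes(\mathcal{A}/\mathcal{I})$ as a cokernel in $\mathcal{C}$: since $\otimes$ is not assumed exact one cannot argue directly, but must construct the comparison map and invoke that a faithful exact functor reflects isomorphisms; I expect this to be the main (mild) obstacle.
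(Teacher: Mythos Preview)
Your proof is correct and follows essentially the same approach as the paper: everything is checked after applying the faithful exact functor $F$, where it reduces to elementary facts about the commutative ring $F(\mathcal{A})$ and its ideal $F(\mathcal{I})$. The paper's proof is a two-line version of yours (it computes the image of the right-multiplication map and observes $F(\mathcal{I}') = F(\mathcal{I})$, then declares the quotient-ring structure ``clear''), whereas you spell out the cokernel identification of $(\mathcal{A}/\mathcal{I})\otimes(\mathcal{A}/\mathcal{I})$ that the paper leaves implicit; this extra care is exactly the kind of argument used elsewhere in the paper (e.g.\ in the proof that $\text{Mod}_\mathcal{A}$ is abelian).
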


\begin{proof}
Consider the image $\mathcal{I}'$ of the multiplication
$\mathcal{A} \otimes \mathcal{I} \to \mathcal{A}$.
By assumption $F(\mathcal{I}') = F(\mathcal{I})$, hence
we have equality. The final assertion is clear.
\end{proof}

\noindent
In order to define the tensor product of two modules over a ring
object we use the notion of commutative modules.

\begin{definition}
\label{definition-commutative-ring}
Let $(R \to A, \mathcal{C}, \otimes, F, \gamma, \mathcal{O}, \mu)$
be as in Situation \ref{situation-category}.
\begin{enumerate}
\item A ring object $\mathcal{A}$ is called {\it commutative} if
there exists an isomorphism
$\sigma : \mathcal{A} \otimes \mathcal{A} \to \mathcal{A} \otimes \mathcal{A}$
which under $F$ gives the usual flip isomorphism and which is compatible
with the multiplication (so in particular $\mathcal{A}$ is weakly
commutative).
\item A module object $\mathcal{F}$ over a ring object
$\mathcal{A}$ is said to be {\it commutative} if there exists an isomorphism
$\sigma : \mathcal{F} \otimes \mathcal{A} \to \mathcal{A} \otimes \mathcal{F}$
which on applying $F$ gives the usual flip isomorphism.
\end{enumerate}
\end{definition}

\noindent
It is clear that if axiom (C) holds, then any weakly commutative ring
object is commutative and all module objects are
automatically commutative.
Let us denote $\text{Mod}^c_\mathcal{A}$
the category of all commutative $\mathcal{A}$-modules. This category
always has cokernels, but not necessarily kernels.

\begin{lemma}
\label{lemma-exact}
Let $(R \to A, \mathcal{C}, \otimes, F, \gamma, \mathcal{O}, \mu)$
be as in Situation \ref{situation-category}. Let $\mathcal{A}$ be a
commutative ring object of $\mathcal{C}$. The category
$\text{Mod}^c_\mathcal{A}$ is abelian in each of the following cases:
\begin{enumerate}
\item axiom (C) holds, or
\item the ring map
$F(\mathcal{A}) \to F(\mathcal{A}) \otimes_A F(\mathcal{A})$
is flat.
\end{enumerate}
The second condition holds for example if $A \to F(\mathcal{A})$ is
either flat or surjective.
\end{lemma}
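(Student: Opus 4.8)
The plan is to handle the two cases separately, the first being immediate. If axiom (C) holds, then (as noted right after Definition \ref{definition-commutative-ring}) every weakly commutative ring object is commutative and every module object is commutative, so $\text{Mod}^c_{\mathcal{A}} = \text{Mod}_{\mathcal{A}}$ and the statement is Lemma \ref{lemma-module-objects-abelian}. So assume instead that, writing $B = F(\mathcal{A})$, the map $B \to B \otimes_A B$ is flat. I will show that $\text{Mod}^c_{\mathcal{A}}$ is a full additive subcategory of the abelian category $\text{Mod}_{\mathcal{A}}$ (it contains the zero object and is closed under finite direct sums, since $\otimes$ is additive — $F$ is and reflects isomorphisms — and the flip of a direct sum is the direct sum of the flips) which is moreover closed under forming kernels and cokernels inside $\text{Mod}_{\mathcal{A}}$. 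Granting this, the images and coimages of morphisms in $\text{Mod}^c_{\mathcal{A}}$, being kernels and cokernels of morphisms between objects of $\text{Mod}^c_{\mathcal{A}}$, again lie in $\text{Mod}^c_{\mathcal{A}}$, and the isomorphism $\text{Im} \cong \text{Coim}$ is inherited from $\text{Mod}_{\mathcal{A}}$; hence $\text{Mod}^c_{\mathcal{A}}$ is abelian.

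Two preliminary facts will be used. First, since $F$ is faithful and exact it reflects exactness, and since $\gamma$ identifies $F(- \otimes \mathcal{G})$ with $F(-) \otimes_A F(\mathcal{G})$, the functors $- \otimes \mathcal{G}$ and $\mathcal{G} \otimes -$ on $\mathcal{C}$ are right exact. Second, the flip $\sigma_{\mathcal{F}}$ of a commutative $\mathcal{A}$-module $\mathcal{F}$ is unique (it is determined by $F(\sigma_{\mathcal{F}})$ being the usual flip, and $F$ is faithful), so for any morphism $\varphi : \mathcal{F} \to \mathcal{G}$ of commutative $\mathcal{A}$-modules the square with edges $\varphi \otimes 1_{\mathcal{A}}$, $1_{\mathcal{A}} \otimes \varphi$, $\sigma_{\mathcal{F}}$, $\sigma_{\mathcal{G}}$ commutes: apply $F$ and use naturality of the usual flip together with faithfulness of $F$.

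For cokernels, let $\varphi : \mathcal{F} \to \mathcal{G}$ be a morphism in $\text{Mod}^c_{\mathcal{A}}$ and let $\mathcal{Q}$ be its cokernel in $\text{Mod}_{\mathcal{A}}$ (equivalently in $\mathcal{C}$); right exactness of $\otimes$ and reflection of exactness show that $\mathcal{Q} \otimes \mathcal{A}$ and $\mathcal{A} \otimes \mathcal{Q}$ are the cokernels of $\varphi \otimes 1_{\mathcal{A}}$ and $1_{\mathcal{A}} \otimes \varphi$, so the commuting square descends to an isomorphism $\sigma_{\mathcal{Q}} : \mathcal{Q} \otimes \mathcal{A} \to \mathcal{A} \otimes \mathcal{Q}$ which under $F$ is the usual flip; thus $\mathcal{Q} \in \text{Mod}^c_{\mathcal{A}}$ (this uses no flatness, and recovers the assertion preceding the lemma that $\text{Mod}^c_{\mathcal{A}}$ always has cokernels). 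For kernels, let $\mathcal{K}$ be the kernel of $\varphi$ in $\text{Mod}_{\mathcal{A}}$, so $0 \to F(\mathcal{K}) \to F(\mathcal{F}) \to F(\mathcal{G})$ is an exact sequence of $B$-modules. For a $B$-module $M$ one has $M \otimes_A B = M \otimes_B (B \otimes_A B)$ where $B \otimes_A B$ carries the $B$-module structure from its first tensor factor, and this is $B$-flat by hypothesis; hence $- \otimes_A B$ is exact on $B$-modules, and since $B$ is commutative the flip of $B \otimes_A B$ identifies its two $B$-module structures, so $B \otimes_A -$ is exact on $B$-modules as well. Applying these, together with $\gamma$, to the displayed sequence, the sequences $0 \to F(\mathcal{K} \otimes \mathcal{A}) \to F(\mathcal{F} \otimes \mathcal{A}) \to F(\mathcal{G} \otimes \mathcal{A})$ and $0 \to F(\mathcal{A} \otimes \mathcal{K}) \to F(\mathcal{A} \otimes \mathcal{F}) \to F(\mathcal{A} \otimes \mathcal{G})$ are exact; since $F$ reflects exactness, $\mathcal{K} \otimes \mathcal{A}$ and $\mathcal{A} \otimes \mathcal{K}$ are the kernels of $\varphi \otimes 1_{\mathcal{A}}$ and $1_{\mathcal{A}} \otimes \varphi$, and the commuting square therefore induces an isomorphism $\sigma_{\mathcal{K}} : \mathcal{K} \otimes \mathcal{A} \to \mathcal{A} \otimes \mathcal{K}$ which under $F$ is the usual flip. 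Hence $\mathcal{K} \in \text{Mod}^c_{\mathcal{A}}$, which completes the proof of the closure properties, and hence the result.

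Finally, for the stated examples: if $A \to B$ is flat then, viewing a short exact sequence of $B$-modules as a sequence of $A$-modules, $- \otimes_A B$ is exact on $B$-modules, so $B \otimes_A B$ is $B$-flat; and if $A \to B$ is surjective then $B \otimes_A B = B$ and the map $B \to B \otimes_A B$ is an isomorphism, in particular flat. The main obstacle throughout is the kernel step: one has to convert the flatness hypothesis, which is stated purely in terms of $F(\mathcal{A})$, into left exactness of $- \otimes \mathcal{A}$ and $\mathcal{A} \otimes -$ on the relevant objects of $\mathcal{C}$, and the device for doing so is to pass to $F(\mathcal{A})$-modules, factor the base change $- \otimes_A F(\mathcal{A})$ through $- \otimes_{F(\mathcal{A})} (F(\mathcal{A}) \otimes_A F(\mathcal{A}))$, and transport the resulting exactness back into $\mathcal{C}$ via the faithful exact functor $F$.
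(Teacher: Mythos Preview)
Your proof is correct and follows essentially the same approach as the paper: reduce case (1) to Lemma \ref{lemma-module-objects-abelian}, and in case (2) show that kernels and cokernels in $\text{Mod}_{\mathcal{A}}$ of maps between commutative modules remain commutative, with the key step for kernels being the identity $M \otimes_A B = M \otimes_B (B \otimes_A B)$ combined with the flatness hypothesis. You are more explicit than the paper in a few places (treating both $\mathcal{K} \otimes \mathcal{A}$ and $\mathcal{A} \otimes \mathcal{K}$, spelling out why closure under kernels and cokernels yields an abelian subcategory, and verifying the ``for example'' claims), but the substance is the same.
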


\begin{proof}
In case (1) we have $\text{Mod}_\mathcal{A} = \text{Mod}^c_\mathcal{A}$ 
so the statement follows from Lemma \ref{lemma-module-objects-abelian}.  
For case (2), let $\varphi : \mathcal{F} \to \mathcal{G}$ be a map
of commutative $\mathcal{A}$-modules.  We set
$\mathcal{K} = \text{Ker}(\varphi)$ and
$\mathcal{Q} = \text{Coker}(\varphi)$ in $\mathcal{C}$, and we know
that these are kernels and cokernels in $\text{Mod}_\mathcal{A}$.
The diagram with exact rows 
$$
\xymatrix{
\mathcal{F} \otimes \mathcal{A} \ar[r] \ar[d]^\sigma &
\mathcal{G} \otimes \mathcal{A} \ar[r] \ar[d]^\sigma &
\mathcal{Q} \otimes \mathcal{A} \ar[r] \ar@{..>}[d] &
0 \\
\mathcal{A} \otimes \mathcal{F} \ar[r] &
\mathcal{A} \otimes \mathcal{G} \ar[r] &
\mathcal{A} \otimes \mathcal{Q} \ar[r] &
0
}
$$
defines the commutativity map $\sigma$ for $\mathcal{Q}$.
But in general we do not know that the map
$\mathcal{K} \otimes \mathcal{A} \to \mathcal{F} \otimes \mathcal{A}$
is injective. After applying $F$ this becomes the map
$$
F(\mathcal{K}) \otimes_A F(\mathcal{A})
\to
F(\mathcal{F}) \otimes_A F(\mathcal{A})
$$
By our discussion in Section \ref{section-ring} we know that
$B = F(\mathcal{A})$ is a commutative $A$-algebra, and
$F(\mathcal{K}) \subset F(\mathcal{F})$ is an inclusion of
$B$-modules. Note that for a $B$-module $M$ we have
$M \otimes_A B = M \otimes_B (B \otimes_A B)$. Hence the injectivity
of the last displayed map is clear if property (2) holds, and in this
case we get the commutativity restraint for $\mathcal{K}$ also.
\end{proof}

\medskip\noindent
If $\mathcal{A}$ is a commutative ring object of $\mathcal{C}$ and
$\mathcal{F}$, $\mathcal{G}$ are module objects over
$\mathcal{A}$, and $\mathcal{F}$ is commutative then we
define
$$
\mathcal{F} \otimes_{\mathcal{A}} \mathcal{G}
:=
\begin{matrix}
\text{Coequalizer of}\\
\text{going around}\\
\text{both ways}
\end{matrix}
\left(
\vcenter{
\xymatrix{
\mathcal{A} \otimes \mathcal{F} \otimes \mathcal{G}
\ar[r]^{\sigma \otimes 1} \ar[d]^{\mu \otimes 1} &
\mathcal{F} \otimes \mathcal{A} \otimes \mathcal{G} \ar[d]^{1 \otimes \mu} \\
\mathcal{F} \otimes \mathcal{G} \ar[r]^1 &
\mathcal{F} \otimes \mathcal{G}
}
}
\right)
$$
Then it is clear that there is a canonical isomorphism
$$
\gamma_\mathcal{A} : 
F(\mathcal{F}) \otimes_{F(\mathcal{A})} F(\mathcal{G})
\longrightarrow
F(\mathcal{F} \otimes_\mathcal{A} \mathcal{G})
$$
which is functorial in the pair $(\mathcal{F}, \mathcal{G})$.
In particular, it is clear that there are functorial isomorphisms
$$
\mu_\mathcal{A} :
\mathcal{A} \otimes_\mathcal{A} \mathcal{F}
\longrightarrow
\mathcal{F},\quad
\mu_\mathcal{A} :
\mathcal{F} \otimes_\mathcal{A} \mathcal{A}
\longrightarrow
\mathcal{F}
$$
for any commutative $\mathcal{A}$-module $\mathcal{F}$ (via $\sigma$ and
the multiplication map for $\mathcal{F}$).

\begin{lemma}
\label{lemma-recover-sitation} \label{lemma-direct-summands-inherited}
Let $(R \to A, \mathcal{C}, \otimes, F, \gamma, \mathcal{O}, \mu)$
be as in Situation \ref{situation-category}. Let $\mathcal{A}$ be a
commutative ring object of $\mathcal{C}$. Assume the category
$\text{Mod}^c_\mathcal{A}$ is abelian. Then
$$
(R \to F(\mathcal{A}),
\text{Mod}^c_\mathcal{A},
\otimes_\mathcal{A},
F, \gamma_\mathcal{A}, \mathcal{A}, \mu_\mathcal{A})
$$
is another set of data as in Situation \ref{situation-category}. 
 Furthermore, if axiom (D) is satisfied for $(R \to A, \mathcal{C}, 
 \otimes, F, \gamma, \mathcal{O}, \mu)$, then it is also satisfied for 
$
(R \to F(\mathcal{A}),
\text{Mod}^c_\mathcal{A},
\otimes_\mathcal{A},
F, \gamma_\mathcal{A}, \mathcal{A}, \mu_\mathcal{A})
$.
\end{lemma}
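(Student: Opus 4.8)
\emph{Plan.} The idea is to run through the seven items of Situation \ref{situation-category} for the tuple $(R \to F(\mathcal{A}), \text{Mod}^c_\mathcal{A}, \otimes_\mathcal{A}, F, \gamma_\mathcal{A}, \mathcal{A}, \mu_\mathcal{A})$ and then to verify axiom (D). The organizing observation is that the new incarnation of $F$, namely $F \colon \text{Mod}^c_\mathcal{A} \to \text{Mod}_{F(\mathcal{A})}$, is again faithful and exact: faithful because $F$ on $\mathcal{C}$ is and morphisms of commutative $\mathcal{A}$-modules are in particular morphisms of $\mathcal{C}$; exact because kernels and cokernels in $\text{Mod}^c_\mathcal{A}$ are those computed in $\mathcal{C}$ (cf.\ the proofs of Lemmas \ref{lemma-module-objects-abelian} and \ref{lemma-exact}), where $F$ is exact. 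Since a faithful exact functor between abelian categories reflects exactness and isomorphisms, each structural isomorphism we must produce ($\gamma_\mathcal{A}$, the unit maps $\mu_\mathcal{A}$, the associativity $\tau_\mathcal{A}$, and the commutativity constraint appearing below) is uniquely pinned down by the demand that it map under $F$ to the standard isomorphism of $F(\mathcal{A})$-modules, so for each one \emph{only existence} has to be checked. With this in hand, items (1), (2), (4), (5), (7) are short: $F(\mathcal{A})$ is a commutative $R$-algebra since $\mathcal{A}$ is (weakly) commutative; $\text{Mod}^c_\mathcal{A}$ is abelian by hypothesis and $R$-linear because $\text{Hom}_\mathcal{A}(\mathcal{F},\mathcal{G}) \subset \text{Mor}_\mathcal{C}(\mathcal{F},\mathcal{G})$ is an $R$-submodule with $R$-bilinear composition; and $\gamma_\mathcal{A}$ together with the unit isomorphisms $\mu_\mathcal{A}$ (with $F(\mathcal{A})$ playing the role of ``$A$'') were constructed in the discussion immediately preceding the lemma.

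\emph{Item (3).} Here I must check that $\otimes_\mathcal{A}$ is a functor landing inside $\text{Mod}^c_\mathcal{A}$ (a priori $\mathcal{F} \otimes_\mathcal{A} \mathcal{G}$ is only an object of $\text{Mod}_\mathcal{A}$) and that it is $R$-bilinear. Rewriting the defining coequalizer as a cokernel exhibits $\mathcal{F} \otimes_\mathcal{A} \mathcal{G}$ as a quotient of $\mathcal{F} \otimes \mathcal{G}$ in $\mathcal{C}$. The functors $- \otimes \mathcal{A}$ and $\mathcal{A} \otimes -$ are right exact: after applying $F$ they become $- \otimes_A F(\mathcal{A})$ and $F(\mathcal{A}) \otimes_A -$, and $F$ reflects exactness. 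Hence $(\mathcal{F} \otimes_\mathcal{A} \mathcal{G}) \otimes \mathcal{A}$ and $\mathcal{A} \otimes (\mathcal{F} \otimes_\mathcal{A} \mathcal{G})$ are quotients of $\mathcal{F} \otimes \mathcal{G} \otimes \mathcal{A}$ and $\mathcal{A} \otimes \mathcal{F} \otimes \mathcal{G}$. The composite $(\sigma_{\mathcal{F}, \mathcal{A}} \otimes 1) \circ (1 \otimes \sigma_{\mathcal{G}, \mathcal{A}})$ on $\mathcal{F} \otimes \mathcal{G} \otimes \mathcal{A}$ descends to these quotients (checked after $F$, where it is the flip; faithfulness then forces the relevant composite with the kernel inclusion to vanish), producing the commutativity constraint $\sigma$ for $\mathcal{F} \otimes_\mathcal{A} \mathcal{G}$, which is an isomorphism because $F$ of it is. Functoriality and $R$-bilinearity of $\otimes_\mathcal{A}$ then follow formally from the universal property of the coequalizer and the same properties of $\otimes$.

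\emph{Item (6) --- the main obstacle.} By repeated use of right exactness of the tensor functors together with the associativity $\tau$ of $\otimes$, both $(\mathcal{F} \otimes_\mathcal{A} \mathcal{G}) \otimes_\mathcal{A} \mathcal{H}$ and $\mathcal{F} \otimes_\mathcal{A} (\mathcal{G} \otimes_\mathcal{A} \mathcal{H})$ can be realized as quotients of $\mathcal{F} \otimes \mathcal{G} \otimes \mathcal{H}$. Applying $F$ and using $\gamma$ and $\gamma_\mathcal{A}$, one verifies that the two resulting quotient maps out of $F(\mathcal{F}) \otimes_A F(\mathcal{G}) \otimes_A F(\mathcal{H})$ have the same kernel --- both cut out $F(\mathcal{F}) \otimes_{F(\mathcal{A})} F(\mathcal{G}) \otimes_{F(\mathcal{A})} F(\mathcal{H})$ up to the standard associativity --- so by faithful exactness the two triple tensor products in $\text{Mod}^c_\mathcal{A}$ are canonically isomorphic; this isomorphism is the sought $\tau_\mathcal{A}$, it reduces to the standard associativity under $F$, and the pentagon identity and bifunctoriality are automatic once tested after $F$. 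Bookkeeping of the various coequalizer presentations (and of the way $- \otimes \mathcal{A}$, $\mathcal{A} \otimes -$ interact with them) is the only genuinely laborious part; everything else is a transport of structure along the faithful exact functor $F$.

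\emph{Axiom (D).} Assume (D) holds for the original data, so $\mathcal{C}$ has arbitrary direct sums and $\otimes$, $F$, $\Gamma$ commute with them. Given a family $\{\mathcal{F}_i\}$ in $\text{Mod}^c_\mathcal{A}$, form $\bigoplus \mathcal{F}_i$ in $\mathcal{C}$; the module structures assemble to an $\mathcal{A}$-module structure on it, and, because $\otimes$ commutes with direct sums, the commutativity constraints $\sigma_{\mathcal{F}_i, \mathcal{A}}$ assemble as well, so $\bigoplus \mathcal{F}_i$ lies in $\text{Mod}^c_\mathcal{A}$ and is the direct sum there. Then $F$ commutes with this direct sum since it does in $\mathcal{C}$; the global sections functor of the new situation is $\mathcal{F} \mapsto \text{Hom}_\mathcal{A}(\mathcal{A}, \mathcal{F}) = \text{Mor}_\mathcal{C}(\mathcal{O}, \mathcal{F}) = \Gamma(\mathcal{F})$, so it too commutes; and $\otimes_\mathcal{A}$ commutes with direct sums because it is built out of $\otimes$ and cokernels, each of which does. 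Hence (D) holds for the new data, which completes the plan.
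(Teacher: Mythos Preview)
Your proposal is correct and is precisely the kind of verification the paper has in mind: the paper's own proof is the single sentence ``This is clear from the discussion above,'' referring to the constructions of $\otimes_\mathcal{A}$, $\gamma_\mathcal{A}$, and $\mu_\mathcal{A}$ just preceding the lemma. You have simply unpacked that sentence by walking through the seven items of Situation~\ref{situation-category} and then axiom (D), using throughout the same governing principle the paper relies on implicitly --- that $F$ is faithful exact, so every structural identity can be checked after applying $F$.
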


\begin{proof}
This is clear from the discussion above.
\end{proof}

\noindent
In the situation of the lemma we have the global sections functor
$$
\Gamma_{\mathcal{A}} : 
\text{Mod}_\mathcal{A}
 \longrightarrow 
 \text{Mod}_R,
\quad
\mathcal{F}
\longmapsto
 \text{Hom}_\mathcal{A}(\mathcal{A}, \mathcal{F}).
$$
We have seen in Section \ref{section-ring} that for an object $\mathcal{F} \in
\text{Mod}_\mathcal{A}$ we have
$\Gamma_{\mathcal{A}}(\mathcal{F}) = \Gamma(\mathcal{F})$ as 
$R$-modules. We will often abuse notation by writing 
$\Gamma = \Gamma_{\mathcal{A}}$.

%\begin{remark}
%\label{remark-bialgebra}
%Can one define a notion of a bialgebra in $\mathcal{C}$ and
%repeat the above constructions in this setting?
%s\end{remark}

\section{Finiteness conditions}
\label{section-finite}

\noindent
Here are some finiteness conditions we can impose.

\begin{definition}
\label{definition-properties}
Let
$(R \to A, \mathcal{C}, \otimes, F, \gamma, \mathcal{O}, \mu)$
be as in Situation \ref{situation-category}.
\begin{enumerate}
\item An object $\mathcal{F}$ of $\mathcal{C}$ is said to be
{\it of finite type} if $F(\mathcal{F})$ is a finitely generated $A$-module.
%\item An object $\mathcal{F}$ of $\mathcal{C}$ is said to be
%{\it of finite presentation} if $F(\mathcal{F})$ is an $A$-module
%of finite presentation.
%\item An object $\mathcal{F}$ of $\mathcal{C}$ if said to be
%{\it flat over $R$} if $F(\mathcal{F})$ is a flat $R$-module.
\item An ring object $\mathcal{A}$ of $\mathcal{C}$ is said to be
{\it of finite type} if $F(\mathcal{A})$ is a finitely generated
$A$-algebra.
%\item An ring object $\mathcal{A}$ of $\mathcal{C}$ is said to be
%{\it of finite presentation} if $F(\mathcal{A})$ is a finitely
%presented $A$-algebra.
\item A module object $\mathcal{F}$ over a ring object $\mathcal{A}$
of $\mathcal{C}$ is said to be {\it of finite type} if
$F(\mathcal{F})$ is of finite type over $F(\mathcal{A})$.
%\item A module object $\mathcal{F}$ over a ring object $\mathcal{A}$
%of $\mathcal{C}$ is said to be {\it of finite presentation} if
%$F(\mathcal{F})$ is a module of finite presentation over $F(\mathcal{A})$.
\end{enumerate}
\end{definition}

\noindent
Note that the ring objects in this definition need not be commutative.
A noncommutative algebra $S$ over $A$ is finitely generated %(resp.\ presented)
if it is isomorphic to a quotient of the free algebra
$A\langle x_1, \ldots, x_n\rangle$ for some $n$%(resp. 
%quotient $A\langle x_1, \ldots, x_n\rangle/I$ for some finitely generated two
%sided ideal $I$)
.

\section{Adequacy}
\label{section-adequacy}

\noindent
The notion of {\it adequacy}, which is our analogue of geometric reductivity,
can be formulated in a variety of different ways.

\begin{definition}
\label{definition-noetherian}
Let
$(R \to A, \mathcal{C}, \otimes, F, \gamma, \mathcal{O}, \mu)$
be as in Situation \ref{situation-category}. We introduce the following
axiom:
\begin{enumerate}
\item[(N)] The ring $A$ is Noetherian.
\end{enumerate}
\end{definition}

\begin{definition}
\label{definition-locally-finite}
Let $(R \to A, \mathcal{C}, \otimes, F, \gamma, \mathcal{O}, \mu)$
be as in Situation \ref{situation-category}. An object $\mathcal{F}$
of $\mathcal{C}$ is called {\it locally finite} if it is a filtered
colimit $\mathcal{F} = \text{colim}\ \mathcal{F}_i$
of finite type objects $\mathcal{F}_i$ such that also
$F(\mathcal{F}) = \text{colim}\ F(\mathcal{F}_i)$.
\end{definition}

\begin{definition}
\label{definition-locally-finite-axiom}
Let
$(R \to A, \mathcal{C}, \otimes, F, \gamma, \mathcal{O}, \mu)$
be as in Situation \ref{situation-category}. We introduce the axiom:
\begin{enumerate}
\item[(L)] Every object $\mathcal{F}$ of $\mathcal{C}$ is locally finite.
\end{enumerate}
\end{definition}

\begin{lemma}
\label{lemma-locally-finite-abelian}
Let $(R \to A, \mathcal{C}, \otimes, F, \gamma, \mathcal{O}, \mu)$
be as in Situation \ref{situation-category}.
A quotient of a locally finite object of $\mathcal{C}$ is locally finite.
If axioms (N) and (D) hold, then a subobject of a locally finite object
is locally finite and the subcategory of locally finite objects is abelian.
\end{lemma}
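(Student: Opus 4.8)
\emph{Setup and standing facts.} The plan is to prove the three assertions in turn, bootstrapping each from the previous. Two facts will be used throughout: first, since $F$ is faithful and exact it reflects isomorphisms (if $F(g)$ is an isomorphism then $F(\text{Ker}(g)) = F(\text{Coker}(g)) = 0$, so $\text{Ker}(g)$ and $\text{Coker}(g)$ are zero objects by faithfulness, hence $g$ is an isomorphism as $\mathcal{C}$ is abelian); second, filtered colimits of $A$-modules are exact. Recall also that, by the remark following Definition \ref{definition-direct-summands}, axiom (D) gives $\mathcal{C}$ all colimits, with $F$ commuting with them.

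\emph{Quotients (no extra axioms).} Let $\mathcal{F} = \text{colim}_i\ \mathcal{F}_i$ be locally finite, with $\mathcal{F}_i$ finite type and $F(\mathcal{F}) = \text{colim}_i\ F(\mathcal{F}_i)$, and let $q : \mathcal{F} \to \mathcal{Q}$ be a surjection. Set $\mathcal{Q}_i = \text{Im}(\mathcal{F}_i \to \mathcal{Q})$; these form a filtered system and are finite type, as $F(\mathcal{Q}_i)$ is a quotient of the finitely generated module $F(\mathcal{F}_i)$. Applying $F$ and using exactness of filtered colimits, $\text{colim}_i\ F(\mathcal{Q}_i) = \text{Im}(\text{colim}_i\ F(\mathcal{F}_i) \to F(\mathcal{Q})) = F(\mathcal{Q})$. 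It remains to show $\mathcal{Q} = \text{colim}_i\ \mathcal{Q}_i$ in $\mathcal{C}$: given a compatible family $\psi_i : \mathcal{Q}_i \to \mathcal{G}$, the composites $\mathcal{F}_i \to \mathcal{Q}_i \xrightarrow{\psi_i} \mathcal{G}$ are compatible, hence induce $\phi : \mathcal{F} \to \mathcal{G}$, and $F(\phi)$ annihilates $F(\text{Ker}(q))$ by an element chase (an element of $F(\text{Ker}(q))$ lifts to some $x_i \in F(\mathcal{F}_i)$ mapping to $0$ in $F(\mathcal{Q})$, so $x_i \in \text{Ker}(F(\mathcal{F}_i) \to F(\mathcal{Q}_i))$ and $F(\psi_i)(x_i) = 0$); by faithfulness $\phi$ kills $\text{Ker}(q)$ and so descends to $\bar\phi : \mathcal{Q} \to \mathcal{G}$, which is the unique such map because $F(\mathcal{Q}) = \bigcup_i F(\mathcal{Q}_i)$.

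\emph{Subobjects, and abelianness, assuming (N) and (D).} Let $\mathcal{F} = \text{colim}_i\ \mathcal{F}_i$ be locally finite as above and $\mathcal{K} \subseteq \mathcal{F}$ a subobject. Put $\mathcal{K}_i = \text{Ker}(\mathcal{F}_i \to \mathcal{F} \to \mathcal{F}/\mathcal{K})$; the transition maps of the $\mathcal{F}_i$ induce transition maps on the $\mathcal{K}_i$, and each $\mathcal{K}_i$ maps canonically to $\mathcal{K}$. Here (N) is used: $F(\mathcal{F}_i)$ is finitely generated over the Noetherian ring $A$, hence Noetherian, so its submodule $F(\mathcal{K}_i) = \text{Ker}(F(\mathcal{F}_i) \to F(\mathcal{F}/\mathcal{K}))$ is finitely generated and $\mathcal{K}_i$ is finite type. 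By exactness of filtered colimits, $\text{colim}_i\ F(\mathcal{K}_i) = \text{Ker}(F(\mathcal{F}) \to F(\mathcal{F}/\mathcal{K})) = F(\mathcal{K})$. By (D) the colimit $\text{colim}_i\ \mathcal{K}_i$ exists in $\mathcal{C}$ and is preserved by $F$, so the canonical map $\text{colim}_i\ \mathcal{K}_i \to \mathcal{K}$ becomes an isomorphism after applying $F$, hence is an isomorphism; thus $\mathcal{K}$ is locally finite. Finally, the full subcategory $\mathcal{C}^{\mathrm{lf}}$ of locally finite objects contains $0$, is closed under finite direct sums (index the colimit over the product category and use that $F$ commutes with $\oplus$ and that $A$ is Noetherian to keep the pieces finite type), and — applying the two parts just proved to $\text{Ker}(\varphi) \subseteq \mathcal{F}$, to $\text{Coker}(\varphi)$ a quotient of $\mathcal{G}$, and to $\text{Im}(\varphi)$ — is closed under kernels and cokernels formed in $\mathcal{C}$; hence $\mathcal{C}^{\mathrm{lf}}$ is abelian (with kernels, cokernels, and $\text{Im} \cong \text{Coim}$ computed in $\mathcal{C}$) and the inclusion is exact.

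\emph{Main obstacle.} The module-level colimit bookkeeping is routine; the real content is twofold. First, one must know the evident diagrams have the expected colimit in $\mathcal{C}$ itself, not merely after applying $F$ — this rests on faithful exactness of $F$ (reflection of isomorphisms) together with axiom (D) (existence and $F$-preservation of colimits). Second, the lone genuinely new input is Noetherianity of $A$: without it the submodule $F(\mathcal{K}_i)$ of the finite type module $F(\mathcal{F}_i)$ need not be finitely generated, and the argument for subobjects — and with it the abelianness of $\mathcal{C}^{\mathrm{lf}}$ — collapses.
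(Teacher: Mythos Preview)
Your proof is correct and follows essentially the same approach as the paper's: images $\mathcal{Q}_i = \text{Im}(\mathcal{F}_i \to \mathcal{Q})$ for the quotient part, pullbacks $\mathcal{K}_i$ for the subobject part (the paper first replaces the $\mathcal{F}_i$ by their images in $\mathcal{F}$ and then intersects, but your direct use of $\text{Ker}(\mathcal{F}_i \to \mathcal{F}/\mathcal{K})$ is equivalent and slightly more streamlined), and in both cases the comparison with the putative colimit is settled by faithful exactness of $F$ together with (D). Your explicit check of closure under finite direct sums is a small completeness gain over the paper, though Noetherianity is not actually needed there.
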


\begin{proof}
Suppose that $\mathcal{F} \to \mathcal{Q}$
is surjective and that $\mathcal{F}$ is locally finite.
Write $\mathcal{F} = \text{colim}\ \mathcal{F}_i$
of finite type objects $\mathcal{F}_i$ such that also
$F(\mathcal{F}) = \text{colim}\ F(\mathcal{F}_i)$. Set 
$\mathcal{Q}_i = \text{Im}(\mathcal{F}_i \to \mathcal{Q})$.
We claim that $\mathcal{Q} = \text{colim}_i\ \mathcal{Q}_i$
and that $F(\mathcal{Q}) = \text{colim}\ F(\mathcal{Q}_i)$.
The last statement follows from exactness of $F$ and the
fact that colimits commute with images in $\text{Mod}_A$.
If $\beta_i : \mathcal{Q}_i \to \mathcal{G}$ is a compatible system
of maps to an object of $\mathcal{C}$, then composing with
the surjections $\mathcal{F}_i \to \mathcal{Q}_i$ gives
a compatible system of maps also, whence a morphism
$\beta : \mathcal{F} \to \mathcal{G}$. But $F(\beta)$ factors
through $F(\mathcal{F}) \to F(\mathcal{Q})$ and hence is zero
on $F(\text{Ker}(\mathcal{F} \to \mathcal{Q})$. Because $F$
is faithful and exact we see that $\beta$ factors as
$\mathcal{Q} \to \mathcal{G}$ as desired.

\medskip\noindent
Suppose that $\mathcal{J} \to \mathcal{F}$
is injective, that $\mathcal{F}$ is locally finite and that (N) and (D) hold.
Write $\mathcal{F} = \text{colim}\ \mathcal{F}_i$
of finite type objects $\mathcal{F}_i$ such that also
$F(\mathcal{F}) = \text{colim}\ F(\mathcal{F}_i)$. By the argument
of the preceding paragraph applied to
$\text{id}_\mathcal{F} : \mathcal{F} \to \mathcal{F}$ we may
assume $\mathcal{F}_i \subset \mathcal{F}_i$ for each $i$.
Set $\mathcal{J}_i = \mathcal{F}_i \cap \mathcal{J}$.
Since axiom (N) holds we see that each $\mathcal{J}_i$ is of finite type.
As $F$ is exact we see that
$\text{colim}\ F(\mathcal{J}_i) = F(\mathcal{J})$.
As axiom (D) holds we know that
$\mathcal{J}' = \text{colim} \mathcal{J}_i$ exists
and $\text{colim}\ F(\mathcal{J}_i) = F(\mathcal{J}')$.
Hence we get a canonical map $\mathcal{J}' \to \mathcal{J}$
which has to be an isomorphism as $F$ is exact and faithful.
This proves that $\mathcal{J}$ is locally finite.

\medskip\noindent
Assume (N) and (D).
Let $\alpha : \mathcal{F} \to \mathcal{G}$
be a morphism of locally finite objects.
We have to show that the kernel and cokernel of $\alpha$ are
locally finite. This is clear by the results of the preceding
two paragraphs.
\end{proof}

\begin{lemma}
\label{lemma-tensor-locally-finite}
Let $(R \to A, \mathcal{C}, \otimes, F, \gamma, \mathcal{O}, \mu)$
be as in Situation \ref{situation-category}. Assume axiom (D) holds.
The tensor product of locally finite objects is locally finite.
For any $R$-module $M$ the object $M \otimes_R \mathcal{O}$ is locally finite.
If $\mathcal{A}$ is a locally finite ring object, then
$\mathcal{A} \otimes_{\Gamma(\mathcal{A})} M$ is locally finite
for any $\Gamma(\mathcal{A})$-module $M$.
\end{lemma}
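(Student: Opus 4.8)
The plan is to treat the three statements in turn, after first recording two stability properties of the class of locally finite objects that axiom (D) makes available. For the first, \emph{an arbitrary direct sum of locally finite objects is locally finite}: writing each summand $\mathcal{F}_i = \text{colim}_{j \in J_i}\ \mathcal{F}_{ij}$ with $\mathcal{F}_{ij}$ of finite type and $F(\mathcal{F}_i) = \text{colim}_j\ F(\mathcal{F}_{ij})$, I would exhibit $\bigoplus_i \mathcal{F}_i$ as the colimit of the $\bigoplus_{i \in S} \mathcal{F}_{i, j_i}$ over the filtered category of pairs $(S, (j_i)_{i \in S})$ with $S$ a finite subset of the index set and $j_i \in J_i$; each term is a finite direct sum of finite type objects, hence of finite type (as $F$ is additive and a finite direct sum of finitely generated $A$-modules is finitely generated), and $F$ commutes with the direct sums (axiom (D)) and with this filtered colimit. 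For the second, \emph{a filtered colimit of locally finite objects is locally finite}: $\text{colim}_i\ \mathcal{F}_i$ is the cokernel of the usual map $\bigoplus_{\phi : i \to i'} \mathcal{F}_i \to \bigoplus_i \mathcal{F}_i$, and both sides are locally finite by the first property, so Lemma \ref{lemma-locally-finite-abelian} (a quotient of a locally finite object is locally finite) applies.

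For statement (1), I would write $\mathcal{F} = \text{colim}_i\ \mathcal{F}_i$ and $\mathcal{G} = \text{colim}_j\ \mathcal{G}_j$ as filtered colimits of finite type objects with $F(\mathcal{F}) = \text{colim}_i\ F(\mathcal{F}_i)$ and $F(\mathcal{G}) = \text{colim}_j\ F(\mathcal{G}_j)$. Since axiom (D) makes $\otimes$ commute with colimits in each variable, $\mathcal{F} \otimes \mathcal{G} = \text{colim}_{(i,j) \in I \times J}\ \mathcal{F}_i \otimes \mathcal{G}_j$ over the filtered category $I \times J$. Each $\mathcal{F}_i \otimes \mathcal{G}_j$ is of finite type because $F(\mathcal{F}_i \otimes \mathcal{G}_j) \cong F(\mathcal{F}_i) \otimes_A F(\mathcal{G}_j)$ is a finitely generated $A$-module, and $F(\mathcal{F} \otimes \mathcal{G}) \cong F(\mathcal{F}) \otimes_A F(\mathcal{G}) = \text{colim}_{(i,j)}\ F(\mathcal{F}_i) \otimes_A F(\mathcal{G}_j) = \text{colim}_{(i,j)}\ F(\mathcal{F}_i \otimes \mathcal{G}_j)$. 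Hence $\mathcal{F} \otimes \mathcal{G}$ is locally finite.

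For statements (2) and (3) I would argue in parallel. Write the $R$-module (resp. $\Gamma(\mathcal{A})$-module) $M$ as the filtered colimit $M = \text{colim}_i\ M_i$ of its finitely generated submodules. The functor $\mathcal{O} \otimes_R -$ is a left adjoint by Lemma \ref{lemma-adjoint}, and $\mathcal{A} \otimes_{\Gamma(\mathcal{A})} -$ is, by its construction via presentations in Lemma \ref{lemma-adjoint-ring}, right exact and compatible with direct sums; in either case it commutes with the filtered colimit $\text{colim}_i\ M_i$, compatibly with $F$ via $F(\mathcal{O} \otimes_R -) = A \otimes_R -$ (resp. $F(\mathcal{A} \otimes_{\Gamma(\mathcal{A})} -) = F(\mathcal{A}) \otimes_{\Gamma(\mathcal{A})} -$). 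For (2), $\mathcal{O} \otimes_R M_i$ is of finite type since $F(\mathcal{O} \otimes_R M_i) = A \otimes_R M_i$ is a finitely generated $A$-module, so $\mathcal{O} \otimes_R M = \text{colim}_i\ \mathcal{O} \otimes_R M_i$ is locally finite directly. For (3), choose a surjection $\Gamma(\mathcal{A})^{n_i} \to M_i$; applying $\mathcal{A} \otimes_{\Gamma(\mathcal{A})} -$ and using $\mathcal{A} \otimes_{\Gamma(\mathcal{A})} \Gamma(\mathcal{A}) = \mathcal{A}$ gives a surjection $\mathcal{A}^{n_i} \to \mathcal{A} \otimes_{\Gamma(\mathcal{A})} M_i$; since $\mathcal{A}$ is locally finite, $\mathcal{A}^{n_i}$ is locally finite by the first preliminary, hence so is its quotient $\mathcal{A} \otimes_{\Gamma(\mathcal{A})} M_i$ by Lemma \ref{lemma-locally-finite-abelian}; finally $\mathcal{A} \otimes_{\Gamma(\mathcal{A})} M = \text{colim}_i\ \mathcal{A} \otimes_{\Gamma(\mathcal{A})} M_i$ is locally finite by the second preliminary.

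The hardest part is statement (3), and the sticking point is that "finite type" is measured against $A$ rather than against $F(\mathcal{A})$: a finitely generated $F(\mathcal{A})$-module is generally not a finite type object of $\mathcal{C}$, so one cannot simply say $\mathcal{A} \otimes_{\Gamma(\mathcal{A})} M$ is of finite type when $M$ is. Getting around this is what forces the detour through quotients of powers of $\mathcal{A}$ and, with it, the two stability properties of local finiteness under direct sums and filtered colimits. Those are routine once the filtered index categories are set up correctly, and that bookkeeping is where the (modest) care is needed; everything else is formal.
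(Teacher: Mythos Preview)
Your proposal is correct and follows the same underlying idea as the paper, which dispatches the lemma in a single sentence (``This is clear since in the presence of (D), the tensor product commutes with colimits''). You have simply unpacked that sentence, and in particular you correctly isolate the only genuinely nontrivial point: in (3) the object $\mathcal{A}\otimes_{\Gamma(\mathcal{A})} M_i$ need not itself be of finite type, so one must pass through $\mathcal{A}^{n_i}$ and invoke closure of locally finite objects under quotients and filtered colimits. The paper leaves all of this implicit.
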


\begin{proof}
This is clear since in the presence of (D), the tensor product commutes
with colimits.
\end{proof}

\begin{lemma}
\label{lemma-adequate-equivalences}
Let $(R \to A, \mathcal{C}, \otimes, F, \gamma, \mathcal{O}, \mu)$
be as in Situation \ref{situation-category}. Assume axiom (S)
holds. Consider the following conditions
\begin{enumerate}
\item For every surjection of finite type objects
$\mathcal{G} \to \mathcal{F}$ and $f \in \Gamma(\mathcal{F})$ there
exists an $n > 0$ and a $g \in \Gamma(\text{Sym}^n_\mathcal{C}(\mathcal{G}))$
which maps to $f^n$ in $\Gamma(\text{Sym}^n_\mathcal{C}(\mathcal{F}))$.
\item For every surjection $\mathcal{G} \to \mathcal{O}$ with
$\mathcal{G}$ of finite type and $f \in \Gamma(\mathcal{O})$ there
exists an $n > 0$ and a $g \in \Gamma(\text{Sym}^n_\mathcal{C}(\mathcal{G}))$
which maps to $f^n$ in $\Gamma(\mathcal{O})$.
\item For every surjection of weakly commutative ring objects
$\mathcal{A} \to \mathcal{B}$ in $\mathcal{C}$ with $\mathcal{A}$ locally
finite, and any
$f \in \Gamma(\mathcal{B})$, there exists an $n > 0$ and an element
$g \in \Gamma(\mathcal{A})$ such that $g \mapsto f^n$ in $\Gamma(\mathcal{B})$.
\end{enumerate}
We always have $(1) \Rightarrow (2)$ and $(1) \Rightarrow (3)$.
If axiom (N) holds, then $(2) \Rightarrow (1)$.
If axiom (D) holds, then $(3) \Rightarrow (1)$.  Furthermore, consider the following variations
\begin{enumerate}
\item[(1')] For every surjection of objects
$\mathcal{G} \to \mathcal{F}$ and $f \in \Gamma(\mathcal{F})$ there
exists an $n > 0$ and a $g \in \Gamma(\text{Sym}^n_\mathcal{C}(\mathcal{G}))$
which maps to $f^n$ in $\Gamma(\text{Sym}^n_\mathcal{C}(\mathcal{F}))$.
\item[(2')] For every surjection
 $\mathcal{G} \to \mathcal{O}$ and $f \in \Gamma(\mathcal{O})$ there
exists an $n > 0$ and a $g \in \Gamma(\text{Sym}^n_\mathcal{C}(\mathcal{G}))$
which maps to $f^n$ in $\Gamma(\mathcal{O})$.
\item[(3')] For every surjection of weakly commutative ring objects
$\mathcal{A} \to \mathcal{B}$ in $\mathcal{C}$, and any
$f \in \Gamma(\mathcal{B})$, there exists an $n > 0$ and an element
$g \in \Gamma(\mathcal{A})$ such that $g \mapsto f^n$ in $\Gamma(\mathcal{B})$.
\end{enumerate}
If axiom (L) holds, then $(1) \Leftrightarrow (1')$, 
$(2) \Leftrightarrow (2')$, and $(3) \Leftrightarrow (3')$.
\end{lemma}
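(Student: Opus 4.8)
The plan is to treat the three equivalences separately, noting at the outset that in each case the implication from the primed statement to the unprimed one is immediate: the unprimed statement is simply the primed one restricted to a smaller class of objects in the hypothesis (finite type objects in $(1)$ and $(2)$, locally finite ring objects in $(3)$). So in each case only the forward implication needs an argument. The equivalence $(3) \Leftrightarrow (3')$ then requires no work at all: the only difference between $(3)$ and $(3')$ is the clause ``with $\mathcal{A}$ locally finite'', and axiom (L) says that every object of $\mathcal{C}$ is locally finite, so under (L) the statements $(3)$ and $(3')$ coincide verbatim.

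For $(1) \Rightarrow (1')$ I would argue as follows. Let $\mathcal{G} \to \mathcal{F}$ be a surjection and $f \in \Gamma(\mathcal{F})$, viewed as a morphism $f : \mathcal{O} \to \mathcal{F}$. Using (L), write $\mathcal{F}$ — rather, $\mathcal{G}$ — as a filtered colimit $\mathcal{G} = \text{colim}\ \mathcal{G}_i$ of finite type objects with $F(\mathcal{G}) = \text{colim}\ F(\mathcal{G}_i)$, and set $\mathcal{F}_i = \text{Im}(\mathcal{G}_i \to \mathcal{G} \to \mathcal{F})$. Each $\mathcal{F}_i$ is of finite type (a quotient of $\mathcal{G}_i$), the $\mathcal{F}_i$ form a filtered system of subobjects of $\mathcal{F}$, and since $F$ is exact, commutes with the colimit, and carries the surjection $\mathcal{G} \to \mathcal{F}$ to a surjection, one gets $F(\mathcal{F}) = \bigcup_i F(\mathcal{F}_i)$. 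As $\text{Im}(F(f))$ is the cyclic $A$-submodule of $F(\mathcal{F})$ generated by $F(f)(1)$, its generator lies in some $F(\mathcal{F}_i)$, hence $\text{Im}(F(f)) \subseteq F(\mathcal{F}_i)$ for that $i$. Forming the pullback of $f : \mathcal{O} \to \mathcal{F}$ along the monomorphism $\mathcal{F}_i \to \mathcal{F}$ inside $\mathcal{C}$ — a limit preserved by $F$, as $F$ is exact — the projection $\mathcal{O} \times_\mathcal{F} \mathcal{F}_i \to \mathcal{O}$ is a monomorphism which becomes an isomorphism after applying $F$, hence is an isomorphism; thus $f$ factors through a morphism $\mathcal{O} \to \mathcal{F}_i$, i.e.\ $f \in \Gamma(\mathcal{F}_i)$. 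Now apply the hypothesis $(1)$ to the surjection of finite type objects $\mathcal{G}_i \to \mathcal{F}_i$ and to $f \in \Gamma(\mathcal{F}_i)$: there exist $n > 0$ and $g_i \in \Gamma(\text{Sym}^n_\mathcal{C}(\mathcal{G}_i))$ mapping to $f^n$ in $\Gamma(\text{Sym}^n_\mathcal{C}(\mathcal{F}_i))$. Let $g \in \Gamma(\text{Sym}^n_\mathcal{C}(\mathcal{G}))$ be the image of $g_i$ under the map induced by $\mathcal{G}_i \to \mathcal{G}$. Since the two composites $\mathcal{G}_i \to \mathcal{G} \to \mathcal{F}$ and $\mathcal{G}_i \to \mathcal{F}_i \hookrightarrow \mathcal{F}$ agree, functoriality of $\text{Sym}^n_\mathcal{C}$ gives a commuting diagram on global sections; combined with the fact that $\text{Sym}^n_\mathcal{C}(\mathcal{F}_i) \to \text{Sym}^n_\mathcal{C}(\mathcal{F})$ sends $f^n$ to $f^n$ (functoriality applied to $\mathcal{O} \to \mathcal{F}_i \hookrightarrow \mathcal{F}$, which is $f$), this shows $g$ maps to $f^n$ in $\Gamma(\text{Sym}^n_\mathcal{C}(\mathcal{F}))$, as required.

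For $(2) \Rightarrow (2')$ I would run the same argument with $\mathcal{F} = \mathcal{O}$. Given a surjection $\mathcal{G} \to \mathcal{O}$ and $f \in \Gamma(\mathcal{O})$, write $\mathcal{G} = \text{colim}\ \mathcal{G}_i$ as above and put $\mathcal{F}_i = \text{Im}(\mathcal{G}_i \to \mathcal{O})$; the submodules $F(\mathcal{F}_i) \subseteq A$ form a directed family with union $A$, so $F(\mathcal{F}_i) = A$ for some $i$, whence the monomorphism $\mathcal{F}_i \to \mathcal{O}$ is an isomorphism (it is an isomorphism after $F$, and $F$ is faithful and exact) and $\mathcal{G}_i \to \mathcal{O}$ is a surjection. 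Applying $(2)$ to $\mathcal{G}_i \to \mathcal{O}$ and $f$, and pushing the resulting element forward along $\text{Sym}^n_\mathcal{C}(\mathcal{G}_i) \to \text{Sym}^n_\mathcal{C}(\mathcal{G})$ exactly as in the previous paragraph, produces the desired $g \in \Gamma(\text{Sym}^n_\mathcal{C}(\mathcal{G}))$ mapping to $f^n$ in $\Gamma(\mathcal{O})$. (The reverse implications $(1') \Rightarrow (1)$, $(2') \Rightarrow (2)$, $(3') \Rightarrow (3)$ are all trivial, as noted.)

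The one step that requires real care — and which I regard as the crux — is the reduction in $(1) \Rightarrow (1')$ showing that $f$ genuinely factors through one of the finite type subquotients $\mathcal{F}_i$ of $\mathcal{F}$ arising from the chosen presentation of $\mathcal{G}$. This is where one must combine the fact that $\text{Im}(F(f))$ is finitely generated (indeed cyclic) over $A$ with axiom (L) for $\mathcal{G}$ and with exactness of $F$, in order to promote an inclusion of $A$-modules to a bona fide factorization in $\mathcal{C}$. Everything else is formal diagram-chasing with the functor $\text{Sym}^n_\mathcal{C}$, and I do not expect to need axioms (D) or (N) anywhere; only (S), which is in force throughout the lemma, and (L).
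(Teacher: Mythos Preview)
Your proposal addresses only the final sentence of the lemma --- the equivalences $(1)\Leftrightarrow(1')$, $(2)\Leftrightarrow(2')$, $(3)\Leftrightarrow(3')$ under axiom (L) --- and says nothing about the main body: that $(1)\Rightarrow(2)$ and $(1)\Rightarrow(3)$ always hold, that $(2)\Rightarrow(1)$ under (N), and that $(3)\Rightarrow(1)$ under (D). These are not trivialities. For $(1)\Rightarrow(3)$ one has to pass from an arbitrary locally finite weakly commutative $\mathcal{A}$ to a finite type subobject carrying a preimage of $f$, and then use weak commutativity to factor the multiplication $\mathcal{G}^{\otimes n}\to\mathcal{A}$ through $\text{Sym}^n_{\mathcal{C}}(\mathcal{G})$. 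For $(2)\Rightarrow(1)$ under (N) one replaces $\mathcal{G}\to\mathcal{F}$ by the fiber product $\mathcal{G}\times_{\mathcal{F}}\mathcal{O}\to\mathcal{O}$, using (N) to see this is still of finite type. For $(3)\Rightarrow(1)$ under (D) one forms the surjection of symmetric algebras $\text{Sym}^*_{\mathcal{C}}(\mathcal{G})\to\text{Sym}^*_{\mathcal{C}}(\mathcal{F})$. None of this appears in your write-up, so as a proof of the lemma it is incomplete.

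For the part you \emph{do} treat, your argument is correct and in fact considerably more careful than the paper, which disposes of the entire final statement with the words ``is clear''. Your reduction in $(1)\Rightarrow(1')$ --- writing $\mathcal{G}=\text{colim}\,\mathcal{G}_i$, forming $\mathcal{F}_i=\text{Im}(\mathcal{G}_i\to\mathcal{F})$, and using the pullback $\mathcal{O}\times_{\mathcal{F}}\mathcal{F}_i$ together with faithfulness and exactness of $F$ to show $f$ factors through some $\mathcal{F}_i$ --- is exactly the right mechanism and is essentially the same idea the paper uses (in its proof of $(1)\Rightarrow(3)$) to descend from a locally finite object to a finite type piece. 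Your handling of $(2)\Rightarrow(2')$ and $(3)\Leftrightarrow(3')$ is likewise correct. So the content you provide is fine; you just need to supply the four implications in the first half of the statement.
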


\begin{proof}
It is clear that (1) implies (2). Assume (N) $+$ (2) and let us prove (1).
Consider $\mathcal{G} \to \mathcal{F}$ and
$f$ as in (1). Let $\mathcal{H} = \mathcal{G} \times_\mathcal{F} \mathcal{O}$.
Then $\mathcal{H} \to \mathcal{O}$ is surjective, and
$F(\mathcal{H}) = F(\mathcal{G}) \times_{F(\mathcal{F})} A$. By assumption
(N) this implies that $F(\mathcal{H})$ is a finite $A$-module.

\medskip\noindent
Let us prove that (1) implies (3). Let $\mathcal{A} \to \mathcal{B}$ and
$f$ be as in (3). Write $\mathcal{A} = \text{colim}_i\ \mathcal{G}_i$ as
a directed colimit such that
$F(\mathcal{A}) = \text{colim}_i\ F(\mathcal{G}_i)$ and
such that each $\mathcal{G}_i$ is of finite type. Think of
$f \in \Gamma(\mathcal{B}) \subset F(\mathcal{B})$. Then for some $i$
there exists a $\tilde f \in F(\mathcal{G}_i)$ which maps to $f$.
Set $\mathcal{G} = \mathcal{G}_i$, set
$\mathcal{F} = \text{Im}(\mathcal{G}_i \to \mathcal{B})$. The map
$\mathcal{G} \to \mathcal{F}$ is surjective. Since $F$ is exact we
see that $f \in F(\mathcal{F}) \subset F(\mathcal{B})$. Hence, as
$\Gamma$ is left exact we conclude that $f \in \Gamma(\mathcal{F})$ as
well. Thus property (1) applies and we find an $n > 0$ and a
$g \in \Gamma(\text{Sym}^n_\mathcal{C}(\mathcal{G}))$
which maps to $f^n$ in $\Gamma(\text{Sym}^n_\mathcal{C}(\mathcal{F}))$.
Since $\mathcal{A}$ and $\mathcal{B}$ are ring objects we obtain
a canonical diagram
$$
\xymatrix{
\mathcal{G}^{\otimes n} \ar[r] \ar[d] &
\mathcal{F}^{\otimes n} \ar[d] \\
\mathcal{A} \ar[r] & \mathcal{B}
}
$$
Since $\mathcal{A}$ and $\mathcal{B}$ are weakly commutative this
produces a commutative diagram
$$
\xymatrix{
\text{Sym}^n_\mathcal{C}(\mathcal{G}) \ar[r] \ar[d] &
\text{Sym}^n_\mathcal{C}(\mathcal{F}) \ar[d] \\
\mathcal{A} \ar[r] & \mathcal{B}
}
$$
Hence the element $g \in \Gamma(\text{Sym}^n_\mathcal{C}(\mathcal{G}))$
maps to the desired element of $\Gamma(\mathcal{A})$.

\medskip\noindent
If (D) holds, then given $\mathcal{G} \to \mathcal{F}$ as in (1)
we can form the map of ``symmetric'' algebras
$$
\text{Sym}^*_\mathcal{C}(\mathcal{G})
\longrightarrow
\text{Sym}^*_\mathcal{C}(\mathcal{F})
$$
and we see that (3) implies (1).

\medskip \noindent
The final statement is clear.
\end{proof}

\noindent
We do not know of an example of Situation \ref{situation-category} where
axiom (D) does not hold. On the other hand, we do know cases where (S)
does not hold, namely, the category of comodules over a general bialgebra.
Hence we take property (3) of the lemma above as the defining property,
since it also make sense in those situations.

\begin{definition}
\label{definition-adequate}
Let
$(R \to A, \mathcal{C}, \otimes, F, \gamma, \mathcal{O}, \mu)$
be as in Situation \ref{situation-category}. We introduce the following
axiom:
\begin{enumerate}
\item[(A)] For every surjection of weakly commutative rings
$\mathcal{A} \to \mathcal{B}$ in $\mathcal{C}$ with $\mathcal{A}$ locally
finite, and any
$f \in \Gamma(\mathcal{B})$, there exists an $n > 0$ and an element
$g \in \Gamma(\mathcal{A})$ such that $g \mapsto f^n$ in $\Gamma(\mathcal{B})$.
\end{enumerate}
\end{definition}

\noindent
A much stronger condition is the notion of {\it goodness},
which is our analogue of linear reductivity. It
can hold even in geometrically interesting situations.

\begin{definition}
\label{definition-good}
Let
$(R \to A, \mathcal{C}, \otimes, F, \gamma, \mathcal{O}, \mu)$
be as in Situation \ref{situation-category}. We introduce the following
axiom:
\begin{enumerate}
\item[(G)] The functor $\Gamma$ is exact.
\end{enumerate}
\end{definition}

\section{Preliminary results}
\label{section-preliminary}

\noindent
Let $\mathcal{A}$ be a weakly commutative ring object of $\mathcal{C}$.
This implies that $\Gamma(\mathcal{A}) \subset F(\mathcal{A})$
is a commutative ring. Let $I \subset \Gamma(\mathcal{A})$ be an ideal. 
Assuming the axiom (D) we have the object
$\mathcal{A} \otimes_{\Gamma(\mathcal{A})} I$
(see Lemma \ref{lemma-adjoint-ring}) and a canonical map
\begin{equation}
\label{equation-multiply-ideal}
\mathcal{A} \otimes_{\Gamma(\mathcal{A})} I
\longrightarrow
\mathcal{A}.
\end{equation}
Namely, this is the adjoint to the map $I \to \Gamma(\mathcal{A})$.
Applying $F$ to the the map (\ref{equation-multiply-ideal}) gives
the obvious map
$F(\mathcal{A}) \otimes_{\Gamma(\mathcal{A})} I \to F(\mathcal{A})$.
The image of (\ref{equation-multiply-ideal}) will be denoted
$\mathcal{A}I$ in the sequel. We have $F(\mathcal{A}I) = F(\mathcal{A})I$
by exactness of the functor $F$.

\medskip\noindent
For an ideal $I$ of a commutative ring $B$ we set
$$
I^* = \{f \in B \mid \exists n > 0,\ f^n \in I^n \}.
$$
Note that it is not clear (or even true) in general that $I^*$
is an ideal. (Our notation is not compatible with notation concerning
integral closure of ideals in algebra texts. We will only use
this notation in this section.)

\begin{lemma}
\label{lemma-adequate}
Assume that we are in Situation \ref{situation-category} and that
axiom (D) holds.
Let $\mathcal{A}$ be a locally finite, weakly commutative ring
object of $\mathcal{C}$.
Let $I \subset \Gamma(\mathcal{A})$ be an ideal.
Consider the ring map
$$
\varphi :
\Gamma(\mathcal{A})/I
\longrightarrow
\Gamma(\mathcal{A}/\mathcal{A}I).
$$
\begin{enumerate}
\item If the axiom (G) holds, $\varphi$ is an isomorphism.
\item If the axiom (A) holds, then
\begin{enumerate}
\item the kernel of $\varphi$ is contained in $I^*\Gamma(\mathcal{A})/I$;
in particular it is locally nilpotent, and
\item for every element $f \in \Gamma(\mathcal{A}/\mathcal{A}I)$
there exists an integer $n > 0$ and an element $g \in \Gamma(\mathcal{A})/I$
which maps to $f^n$ via $\varphi$.
\end{enumerate}
\end{enumerate}
\end{lemma}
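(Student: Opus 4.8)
The plan is to reduce everything to understanding $\Gamma(\mathcal{A}\mathcal{I})$ as a submodule of $\Gamma(\mathcal{A})$: prove (1) by a diagram chase using exactness of $\Gamma$, observe that (2)(b) is essentially a restatement of axiom (A), and prove (2)(a) by a Rees-algebra construction. First I would record the basic bookkeeping. Since $F$ is exact, $F(\mathcal{A}/\mathcal{A}\mathcal{I}) = F(\mathcal{A})/F(\mathcal{A})\mathcal{I}$, where I write $\mathcal{I} = I$; the subset $I \subset \Gamma(\mathcal{A}) \subset F(\mathcal{A})$ maps to $0$ there, so $\Gamma(\mathcal{A}) \to \Gamma(\mathcal{A}/\mathcal{A}I)$ kills $I$ and $\varphi$ is well defined. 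Because $F$ is faithful and exact, a section $s \in \Gamma(\mathcal{A})$ factors through a subobject $\mathcal{A}I \subset \mathcal{A}$ if and only if $F(s) \in F(\mathcal{A})I$; combined with $\Gamma(\mathcal{A}/\mathcal{A}I) \hookrightarrow F(\mathcal{A})/F(\mathcal{A})I$, this gives $\ker\varphi = \Gamma(\mathcal{A}I)/I$, with $I \subseteq \Gamma(\mathcal{A}I)$ always. So everything hinges on comparing $\Gamma(\mathcal{A}I)$ and $I$.

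For (1) I would apply the exact functor $\Gamma$ to $0 \to \mathcal{A}I \to \mathcal{A} \to \mathcal{A}/\mathcal{A}I \to 0$, which gives surjectivity of $\varphi$ and $\Gamma(\mathcal{A}/\mathcal{A}I) = \Gamma(\mathcal{A})/\Gamma(\mathcal{A}I)$; it then remains to show $\Gamma(\mathcal{A}I) = I$. Pick a presentation $\Gamma(\mathcal{A})^{\oplus J} \to \Gamma(\mathcal{A})^{\oplus K} \to I \to 0$ given by a matrix $m$. By construction (as in Lemma \ref{lemma-adjoint-ring}) $\mathcal{A}\otimes_{\Gamma(\mathcal{A})} I = \text{Coker}(\mathcal{A}^{\oplus J} \xrightarrow{m} \mathcal{A}^{\oplus K})$, and applying $F$ shows $\mathcal{A}^{\oplus J} \to \mathcal{A}^{\oplus K} \to \mathcal{A}\otimes_{\Gamma(\mathcal{A})} I \to 0$ is exact. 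Since $\Gamma$ is exact and commutes with direct sums by (D), applying it recovers the presentation of $I$, so $\Gamma(\mathcal{A}\otimes_{\Gamma(\mathcal{A})} I) = I$ and tracing through the generators identifies the canonical map $\Gamma(\mathcal{A}\otimes_{\Gamma(\mathcal{A})} I) \to \Gamma(\mathcal{A})$ with the inclusion $I \hookrightarrow \Gamma(\mathcal{A})$. As $\mathcal{A}\otimes_{\Gamma(\mathcal{A})} I \to \mathcal{A}$ factors as a surjection onto $\mathcal{A}I$ followed by an inclusion and $\Gamma$ (being exact) preserves the surjection, the image of $I$ in $\Gamma(\mathcal{A})$ is exactly $\Gamma(\mathcal{A}I)$; hence $\Gamma(\mathcal{A}I) = I$ and $\varphi$ is an isomorphism.

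Now assume axiom (A). Part (b) is immediate: $\mathcal{A}$ is locally finite and weakly commutative and $\mathcal{A} \to \mathcal{A}/\mathcal{A}I$ is a surjection onto a weakly commutative ring object (Lemma \ref{lemma-quotient-weakly-commutative}), so (A) applied to $f \in \Gamma(\mathcal{A}/\mathcal{A}I)$ yields $n>0$ and $\tilde g \in \Gamma(\mathcal{A})$ with $\tilde g \mapsto f^n$, and the class of $\tilde g$ in $\Gamma(\mathcal{A})/I$ is the required $g$. For part (a), by the description of $\ker\varphi$ it suffices to show every $g \in \Gamma(\mathcal{A}I)$ satisfies $g^n \in I^n$ for some $n>0$ (then $g \in I^* \subseteq I^*\Gamma(\mathcal{A})$, and local nilpotence of $\ker\varphi$ follows since $I^n \subseteq I$). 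Fix such a $g$ and a generating set $\{a_s\}_{s\in S}$ of the ideal $I \subseteq \Gamma(\mathcal{A})$. I would form the polynomial ring objects $\mathcal{P} = \mathcal{A}[x_s : s\in S] = \mathcal{A}\otimes_{\Gamma(\mathcal{A})} \Gamma(\mathcal{A})[x_s]$ and $\mathcal{A}[t]$ of Remark \ref{remark-polynomial-rings}; here $\mathcal{P}$ is weakly commutative and, by Lemma \ref{lemma-tensor-locally-finite}, locally finite. Let $\mathcal{R} \subseteq \mathcal{A}[t]$ be the image of the ring-object homomorphism $\mathcal{P} \to \mathcal{A}[t]$, $x_s \mapsto a_s t$ (legitimate by Remark \ref{remark-polynomial-rings}, as $a_s t \in \Gamma(\mathcal{A}[t])$ and $\mathcal{A}[t]$ is weakly commutative); then $\mathcal{P} \to \mathcal{R}$ is a surjection of weakly commutative ring objects. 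Grading $\mathcal{P}$ by degree in the $x_s$ and $\mathcal{A}[t]$ by degree in $t$, this map is degree-preserving, so $\mathcal{R} = \bigoplus_{d\geq 0}\mathcal{R}_d$ with $F(\mathcal{R}_d) = F(\mathcal{A})I^d t^d$; in particular $\mathcal{R}_1 \cong \mathcal{A}I$ under $\mathcal{A}t\cong\mathcal{A}$, so $gt \in \Gamma(\mathcal{R}_1) \subseteq \Gamma(\mathcal{R})$. Applying (A) to $\mathcal{P} \to \mathcal{R}$ and $gt$ produces $n>0$ and $h \in \Gamma(\mathcal{P}) = \Gamma(\mathcal{A})[x_s]$ (using that $\Gamma$ commutes with direct sums) whose image under the substitution $x_s \mapsto a_s t$ is $(gt)^n = g^n t^n$. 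Writing $h = \sum_d h_d$ with $h_d$ homogeneous of degree $d$, this image is $\sum_d h_d(a_s)t^d$ (substituting $x_s = a_s$), and since $\Gamma(\mathcal{R}) = \bigoplus_d \Gamma(\mathcal{R}_d)$, comparing degree-$n$ terms gives $h_n(a_s) = g^n$. But $h_n$ is a $\Gamma(\mathcal{A})$-linear combination of degree-$n$ monomials in the $x_s$, so $h_n(a_s)$ is such a combination of degree-$n$ monomials in the $a_s$, hence lies in $I^n$. Thus $g^n \in I^n$.

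The main obstacle is part (2)(a). The natural object to feed to axiom (A) is the Rees-algebra object $\bigoplus_d \mathcal{A}I^d t^d$, but without axiom (N) I cannot conclude it is locally finite merely because it is a subobject of the locally finite $\mathcal{A}[t]$. The fix, as above, is to exhibit it instead as the image $\mathcal{R}$ of the polynomial ring object $\mathcal{A}[x_s]$, which is locally finite by Lemma \ref{lemma-tensor-locally-finite}, and to use the grading so that the lift supplied by (A), after passing to its degree-$n$ component and substituting the generators of $I$, lands in $I^n$ rather than merely in $\Gamma(\mathcal{A}I^n)$. The remaining verifications — that $\mathcal{R}$ really is a weakly commutative ring object, that the substitution map is graded, and that $gt$ genuinely lies in $\Gamma(\mathcal{R})$ — will all follow routinely by applying the faithful exact functor $F$.
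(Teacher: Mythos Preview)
Your proposal is correct and follows essentially the same approach as the paper's proof. Both arguments reduce to the Rees-type surjection $\mathcal{A}[x_s;s\in S]\twoheadrightarrow\bigoplus_{n\ge 0}\mathcal{A}I^n$ (you realize the target inside $\mathcal{A}[t]$ via $x_s\mapsto a_s t$, while the paper works with the graded object directly), apply axiom (A) to lift $f^n$, and then read off $f^n\in I^n$ from the degree-$n$ component; your treatment of (1) via a full presentation of $I$ is a mild elaboration of the paper's simpler observation that (G) applied to the degree-$1$ surjection $\bigoplus_s\mathcal{A}\to\mathcal{A}I$ already forces $\Gamma(\mathcal{A}I)=I$.
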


\begin{proof}
The surjectivity of $\varphi$ in (1) is immediate from axiom (G).
The ring object $\mathcal{A}/\mathcal{A}I$ is
weakly commutative (by Lemma \ref{lemma-quotient-weakly-commutative}).
Hence (2b) is implied by axiom (A).

\medskip\noindent
Suppose that $f \in \Gamma(\mathcal{A})$ maps to zero in
$\Gamma(\mathcal{A}/\mathcal{A}I)$. This means that
$f \in \Gamma(\mathcal{A}I)$. Choose generators
$f_s \in I$, $s \in S$ for $I$. Consider the ring map
$$
\mathcal{A}[x_s; s \in S]
\longrightarrow
\mathcal{B} = \bigoplus I^n\mathcal{A}
$$
which maps $x_s$ to $f_s \in \Gamma(I\mathcal{A})$, see
Remark \ref{remark-polynomial-rings}.
This is a surjection of ring objects of $\mathcal{C}$.
Hence if (G) holds, then we see that $f$ is in the image
of $\bigoplus_{s \in S} \Gamma(\mathcal{A}) \to \Gamma(\mathcal{A}I)$,
i.e., $f$ is in $\Gamma(\mathcal{A})I$ and injectivity in (1) holds.
For the rest of the proof assume (A). Clearly the polynomial algebra
$\mathcal{A}[x_s; s \in S]$ is weakly commutative and locally finite.
Hence (A) implies there exists an $n > 0$ and an element
$$
g \in \Gamma(\mathcal{A}[x_s; s \in S])
$$
which maps to $f^n$ in the summand $\Gamma(\mathcal{A}I^n)$ of
$\Gamma(\mathcal{B})$. Hence we may also assume that $g$ is in
the degree $n$ summand
$$
\Gamma(\bigoplus\nolimits_{|J| = n} \mathcal{A}x^J)
$$
of $\Gamma(\mathcal{A}[x_s; s \in S])$.
Now, note that there is a ring map $\mathcal{B} \to \mathcal{A}$
and that the composition
$$
\mathcal{A}[x_s; s \in S]
\longrightarrow
\mathcal{B}
\longrightarrow
\mathcal{A}
$$
in degree $n$ maps $\Gamma(\bigoplus_{|J| = n} \mathcal{A}x^J)$ into
$\Gamma(\mathcal{A})I^n$, because $x_s$ maps to $f_s$. Hence
$f^n \in I^n$. This finishes the proof.
\end{proof}

\noindent
Let $\mathcal{A}$ be a weakly commutative ring object of $\mathcal{C}$.
Let $\Gamma(\mathcal{A}) \to \Gamma'$ be a homomorphism of commutative rings.
Write $\Gamma' = \Gamma(\mathcal{A})[x_s; s \in S]/I$.
Assume axiom (D) holds. Then we see that we have the equality
$$
\mathcal{A} \otimes_{\Gamma(\mathcal{A})} \Gamma'
=
\mathcal{A}[x_s; s \in S]/(\mathcal{A}[x_s; s \in S])I
$$
where the polynomial algebra is as in Remark \ref{remark-polynomial-rings}
and the tensor product as in Lemma \ref{lemma-adjoint-ring}.
The reason is that there is an obvious map (from right to left) and
that we have
$$
F(\mathcal{A} \otimes_{\Gamma(\mathcal{A})} \Gamma')
=
F(\mathcal{A})\otimes_{\Gamma(\mathcal{A})} \Gamma'
=
F(\mathcal{A})[x_s; s \in S]/(F(\mathcal{A})[x_s; s \in S])I
$$
by the properties of the functor $F$ and the results mentioned above.
Hence $\mathcal{A} \otimes_{\Gamma(\mathcal{A})} \Gamma'$
is a weakly commutative ring object (see
Lemma \ref{lemma-quotient-weakly-commutative}).
Note that if $\mathcal{A}$ is locally finite, then so is
$\mathcal{A} \otimes_{\Gamma(\mathcal{A})} \Gamma'$, see
Lemma \ref{lemma-tensor-locally-finite}.

\begin{lemma}
\label{lemma-adjoint-new}
Assume that we are in Situation \ref{situation-category} 
and that axiom
(D) holds. Let $\mathcal{A}$ be a ring object.
\begin{enumerate}
\item Assume that also axiom (G) holds. If $M$ is a left
$\Gamma(\mathcal{A})$-module, then the adjunction map
$$
\varphi : 
M 
\longrightarrow 
\Gamma(\mathcal{\mathcal{A}} \otimes_{\Gamma(\mathcal{A})} M)
$$
is an isomorphism.
\item Assume the axiom (A) holds, and that $\mathcal{A}$ is locally
finite and weakly commutative. Let 
$\Gamma(\mathcal{A}) \to \Gamma'$ be a commutative ring map.
Consider the adjunction map
$$
\varphi :
\Gamma'
\longrightarrow
\Gamma(\mathcal{A} \otimes_{\Gamma(\mathcal{A})} \Gamma')
$$
\begin{enumerate}
\item the kernel of $\varphi$ is locally nilpotent, and
\item for every element
$f \in \Gamma(\mathcal{A} \otimes_{\Gamma(\mathcal{A})} \Gamma')$
there exists an integer $n > 0$ and an element
$g \in \Gamma'$ which maps to $f^n$ via $\varphi$.
\end{enumerate}
\end{enumerate}
\end{lemma}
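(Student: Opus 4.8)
\medskip\noindent
I would treat the two parts separately. Part (1) will follow formally from the adjunction of Lemma~\ref{lemma-adjoint-ring} together with the exactness axioms, and part (2) will be deduced from Lemma~\ref{lemma-adequate} after enlarging $\mathcal{A}$ to a polynomial ring object.

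\medskip\noindent
For part (1), write $L = \mathcal{A}\otimes_{\Gamma(\mathcal{A})} -$ for the functor of Lemma~\ref{lemma-adjoint-ring}; it is right exact, commutes with direct sums, and satisfies $L(\Gamma(\mathcal{A})) = \mathcal{A}$, so the adjunction map $\varphi$ is an isomorphism on $\Gamma(\mathcal{A})$. Since $\Gamma$ also commutes with direct sums by axiom (D), $\varphi$ is then an isomorphism on every free $\Gamma(\mathcal{A})$-module. Now I would choose a free presentation $\bigoplus_{j\in J}\Gamma(\mathcal{A}) \to \bigoplus_{i\in I}\Gamma(\mathcal{A}) \to M \to 0$, apply the right exact functor $L$ to obtain an exact sequence $\bigoplus_{j}\mathcal{A} \to \bigoplus_{i}\mathcal{A} \to L(M) \to 0$ in $\text{Mod}_\mathcal{A}$, and then apply $\Gamma$, which is exact by axiom (G). By naturality of the adjunction map this gives a morphism from the presentation of $M$ to the resulting right exact sequence which is an isomorphism in the two free terms, and comparing cokernels shows that $\varphi \colon M \to \Gamma(\mathcal{A}\otimes_{\Gamma(\mathcal{A})}M)$ is an isomorphism.

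\medskip\noindent
For part (2), I would write $\Gamma' = \Gamma(\mathcal{A})[x_s;\,s\in S]/I$ as a quotient of a polynomial ring and set $\mathcal{A}' = \mathcal{A}[x_s;\,s\in S]$, the polynomial ring object of Remark~\ref{remark-polynomial-rings}. Then $\mathcal{A}'$ is weakly commutative, and it is locally finite since $\mathcal{A}$ is (Lemma~\ref{lemma-tensor-locally-finite}); moreover $\Gamma(\mathcal{A}') = \Gamma(\mathcal{A})[x_s;\,s\in S]$ because $\Gamma$ commutes with the direct sum $\mathcal{A}' = \bigoplus \mathcal{A}x^J$ over monomials by axiom (D). Thus $I$ is an ideal of $\Gamma(\mathcal{A}')$, and by the discussion preceding the lemma $\mathcal{A}'/\mathcal{A}'I = \mathcal{A}\otimes_{\Gamma(\mathcal{A})}\Gamma'$ compatibly with the adjunction maps, so the map $\varphi$ of the present lemma is identified with the map $\varphi \colon \Gamma(\mathcal{A}')/I \to \Gamma(\mathcal{A}'/\mathcal{A}'I)$ of Lemma~\ref{lemma-adequate}. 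Applying Lemma~\ref{lemma-adequate}(2) to the locally finite weakly commutative ring object $\mathcal{A}'$ and the ideal $I$ (legitimate under axiom (A)) then gives (a) — the kernel is contained in $I^*\Gamma(\mathcal{A}')/I$, hence is locally nilpotent — and (b) at once.

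\medskip\noindent
I expect the only real content to be the reduction in part (2): one must check that replacing $\mathcal{A}$ by the polynomial ring object $\mathcal{A}[x_s;\,s\in S]$ preserves both local finiteness and weak commutativity, and that the adjunction map of the present lemma agrees with the map $\varphi$ of Lemma~\ref{lemma-adequate} under the identification $\mathcal{A}\otimes_{\Gamma(\mathcal{A})}\Gamma' \cong \mathcal{A}'/\mathcal{A}'I$; both facts are essentially recorded already in the paragraph just before the statement. The presentation argument in part (1) and the compatibilities of the adjunction units are routine.
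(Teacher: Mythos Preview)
Your proposal is correct and follows essentially the same approach as the paper: for part (1) you use a free presentation together with commutation with direct sums and the exactness of $\Gamma$, and for part (2) you realize $\Gamma'$ as a quotient of a polynomial ring, pass to the polynomial ring object $\mathcal{A}[x_s;\,s\in S]$, and invoke Lemma~\ref{lemma-adequate} via the identification recorded in the paragraph preceding the statement. The paper's proof is just a terser version of exactly this argument.
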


\begin{proof}
For (1), since both functors
$\mathcal{A} \otimes_{\Gamma(\mathcal{A})} -$ 
and $\Gamma$ commute with arbitrary direct sums,
the map $\varphi$ is an  isomorphism when $M$ is free.
Furthermore, since  $\mathcal{A} \otimes_{\Gamma(\mathcal{A})} -$
is right exact and $\Gamma$ is exact, the general case follows.
For (2), the map is an isomorphism when $\Gamma'$ is a polynomial
algebra (since we are assuming all functors commute with direct sums).
And the general case follows from this, the discussion above the
lemma and Lemma \ref{lemma-adequate}.
\end{proof}

\begin{lemma}
\label{lemma-surjective}
Assume that we are in Situation \ref{situation-category} and that
axioms (D) and (A) hold. Then for every locally finite, weakly commutative
ring object $\mathcal{A}$ of $\mathcal{C}$ the map
$$
\text{Spec}(F(\mathcal{A})) \longrightarrow \text{Spec}(\Gamma(\mathcal{A}))
$$
is surjective.
\end{lemma}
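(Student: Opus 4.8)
The plan is as follows. Write $B = F(\mathcal{A})$ and $C = \Gamma(\mathcal{A})$ for brevity. By the discussion in Section \ref{section-ring}, $C$ is a commutative subring of $B$, so the displayed map is simply the contraction $\text{Spec}(B) \to \text{Spec}(C)$, $Q \mapsto Q \cap C$, and surjectivity is the lying-over assertion: given a prime $P \subset C$, we must produce a prime $Q \subset B$ with $Q \cap C = P$. Fix such a $P$ once and for all.

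The first step is to feed $P$ into Lemma \ref{lemma-adequate}. The quotient $\mathcal{A}/\mathcal{A}P$ is again a locally finite weakly commutative ring object (by the discussion preceding Lemma \ref{lemma-adjoint-new} together with Lemma \ref{lemma-quotient-weakly-commutative}), and $F(\mathcal{A}/\mathcal{A}P) = B/PB$ because $F$ is exact and $F(\mathcal{A}P) = PB$. Since $P$ maps into $PB = F(\mathcal{A}P)$ inside $B$, for any $f \in P$ the composite $\mathcal{O} \xrightarrow{f} \mathcal{A} \to \mathcal{A}/\mathcal{A}P$ vanishes — one checks this after applying the faithful functor $F$ — so there is an induced ring map $\varphi : C/P \to \Gamma(\mathcal{A}/\mathcal{A}P)$, which is exactly the map of Lemma \ref{lemma-adequate} for the ideal $I = P$. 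Since axioms (D) and (A) hold and $\mathcal{A}$ is locally finite and weakly commutative, part (2a) of that lemma applies and tells us that $\ker(\varphi) \subseteq P^{*}C/P$. The one genuinely arithmetic point is that $P^{*} \subseteq P$: if $f^{n} \in P^{n} \subseteq P$ then $f \in P$ because $P$ is prime. Hence $P^{*}C \subseteq P$, so $\varphi$ is injective, and using $\Gamma \subseteq F$ once more we have realized $C/P$ as a subring of $B/PB$; in particular $C/P$ is a nonzero integral domain sitting inside $B/PB$.

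The final step is elementary commutative algebra. Let $\bar{S} \subseteq B/PB$ be the image of the multiplicative set $C \setminus P$. Under the inclusion $C/P \subseteq B/PB$ just produced, $\bar{S}$ is precisely the set of nonzero elements of the domain $C/P$, so $0 \notin \bar{S}$. Therefore $\bar{S}^{-1}(B/PB)$ is a nonzero ring, hence has a prime ideal, whose contraction to $B/PB$ is a prime disjoint from $\bar{S}$. Viewing primes of $B/PB$ as the primes $Q$ of $B$ with $PB \subseteq Q$ (and hence $P \subseteq Q$), we obtain $Q$ with $P \subseteq Q$ and $Q \cap (C \setminus P) = \emptyset$, i.e. $Q \cap C = P$, as desired.

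The only place where adequacy is actually used is the injectivity of $\varphi$ coming from Lemma \ref{lemma-adequate}(2a); everything else is formal. Accordingly, I do not expect a serious obstacle — the one thing to keep straight is the bookkeeping: the identification $F(\mathcal{A}/\mathcal{A}P) = B/PB$, the inclusion $P^{*} \subseteq P$ for $P$ prime, and the fact that the map under consideration is literally the contraction map for $C \subseteq B$ so that surjectivity is equivalent to $C \cap PB = P$ for every prime $P$.
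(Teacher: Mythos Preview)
Your proof is correct. The approach is close in spirit to the paper's but organized differently. The paper argues via fibers: for any map $\Gamma(\mathcal{A}) \to K$ to a field, it shows $F(\mathcal{A}) \otimes_{\Gamma(\mathcal{A})} K = F(\mathcal{A} \otimes_{\Gamma(\mathcal{A})} K)$ is nonzero by invoking Lemma~\ref{lemma-adjoint-new}(2a), whose locally nilpotent kernel must vanish since $K$ is a field. You instead go straight to Lemma~\ref{lemma-adequate}(2a) with $I = P$, use the primeness of $P$ to kill $P^{*}C/P$, and then finish with an explicit lying-over argument. Your route is slightly more hands-on---it bypasses the packaging of Lemma~\ref{lemma-adjoint-new} at the cost of the localization step at the end---while the paper's version is terser because the fiber-nonvanishing criterion for surjectivity absorbs that commutative algebra. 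Both arguments ultimately rest on the same injectivity coming from adequacy.
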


\begin{proof}
Let $\Gamma(\mathcal{A}) \to K$ be a ring map to a field.
We have to show that the ring
$$
F(\mathcal{A}) \otimes_{\Gamma(\mathcal{A})} K
=
F(\mathcal{A} \otimes_{\Gamma(\mathcal{A})} K)
$$
is not zero. This follows from Lemma \ref{lemma-adjoint-new}
and the fact that $K$ is not the zero ring.
\end{proof}

\noindent
In the following lemma we use the notion of a \emph{universally subtrusive}
morphism of schemes $f : X \to Y$. This means that $f$
satisfies the following valuation lifting property:
for every valuation ring $V$ and every morphism $\text{Spec}(V) \to Y$ 
there exists a local map of valuation rings $V \to V'$ and a morphism
$\text{Spec}(V') \to X$ such that
$$
\xymatrix{
X \ar[d] & \text{Spec}(V') \ar[l] \ar[d] \\
Y & \text{Spec}(V) \ar[l]
}
$$
is commutative. It turns out that if $f : X \to Y$ is of finite type, and
$Y$ is Noetherian, then this notion is equivalent to $f$ being
\emph{universally submersive}.

\begin{lemma}
\label{lemma-universally-submersive}
Let
$(R \to A, \mathcal{C}, \otimes, F, \gamma, \mathcal{O}, \mu)$
be as in Situation \ref{situation-category}.
Let $\mathcal{A}$ be a ring object.
Assume that
\begin{enumerate}
\item axioms (D) and (A) hold, and
\item $\mathcal{A}$ is locally finite and weakly commutative.
\end{enumerate}
Then $\text{Spec}(F(\mathcal{A})) \to \text{Spec}(\Gamma(\mathcal{A}))$
is universally subtrusive. If in addition, 
\begin{enumerate} \setcounter{enumi}{2}
\item $R \to A$ is finite type,
\item $\mathcal{A}$ is of finite type, and
\item $\Gamma(\mathcal{A})$ is Noetherian.
\end{enumerate}
Then 
$\text{Spec}(F(\mathcal{A})) \to \text{Spec}(\Gamma(\mathcal{A}))$
is universally submersive.
\end{lemma}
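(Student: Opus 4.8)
The plan is to deduce the statement from the general machinery on universally subtrusive morphisms. The first claim — that $\mathrm{Spec}(F(\mathcal{A})) \to \mathrm{Spec}(\Gamma(\mathcal{A}))$ is universally subtrusive under hypotheses (1) and (2) — is the heart of the matter, and I would prove it by verifying the valuative lifting criterion directly. So let $V$ be a valuation ring, and let $\mathrm{Spec}(V) \to \mathrm{Spec}(\Gamma(\mathcal{A}))$ be a morphism, i.e.\ a ring map $\Gamma(\mathcal{A}) \to V$. Set $\Gamma' = V$ in Lemma \ref{lemma-adjoint-new}(2), and form the weakly commutative, locally finite ring object $\mathcal{A} \otimes_{\Gamma(\mathcal{A})} V$ (using the discussion preceding Lemma \ref{lemma-adjoint-new}, together with Lemma \ref{lemma-tensor-locally-finite} for local finiteness). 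By Lemma \ref{lemma-surjective} applied to this ring object over the base $R \to V$ — or more directly by Lemma \ref{lemma-adjoint-new}(2a), the kernel of $V \to \Gamma(\mathcal{A} \otimes_{\Gamma(\mathcal{A})} V)$ is locally nilpotent, hence contained in every prime, and since $V$ is a domain this kernel is zero — the ring $B := F(\mathcal{A} \otimes_{\Gamma(\mathcal{A})} V) = F(\mathcal{A}) \otimes_{\Gamma(\mathcal{A})} V$ is nonzero and $V$ injects into $\Gamma(B) \subset B$. Now choose any prime of $B$ and pull back to a valuation ring $V'$ of the residue field dominating the image of $V$; concretely, pick a minimal prime $\mathfrak{q}$ of $B$ lying over $(0) \subset V$ (possible since $V \hookrightarrow \Gamma(B) \subset B$ forces such a prime to exist after inverting the nonzero elements of $V$), and let $V'$ be a valuation ring of $\mathrm{Frac}(B/\mathfrak{q})$ containing the image of $V$. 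Then $V \to V'$ is a local map of valuation rings and $\mathrm{Spec}(V') \to \mathrm{Spec}(B) \to \mathrm{Spec}(F(\mathcal{A}))$ together with $\mathrm{Spec}(V') \to \mathrm{Spec}(V)$ makes the required square commute. This is the lifting property, so $\mathrm{Spec}(F(\mathcal{A})) \to \mathrm{Spec}(\Gamma(\mathcal{A}))$ is universally subtrusive.

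For the second, sharper conclusion under the additional hypotheses (3)–(5), the idea is simply to invoke the cited equivalence: for a finite type morphism $f : X \to Y$ with $Y$ Noetherian, being universally subtrusive is the same as being universally submersive. So it remains to check that $\mathrm{Spec}(F(\mathcal{A})) \to \mathrm{Spec}(\Gamma(\mathcal{A}))$ is of finite type and that $\mathrm{Spec}(\Gamma(\mathcal{A}))$ is Noetherian. The latter is exactly hypothesis (5). For the former: $\mathcal{A}$ of finite type means $F(\mathcal{A})$ is a finitely generated $A$-algebra, and $R \to A$ is of finite type by (3), so $F(\mathcal{A})$ is a finitely generated $R$-algebra; since $R \to \Gamma(\mathcal{A})$ factors this (by the diagram of rings in Section \ref{section-ring}), $F(\mathcal{A})$ is in particular a finitely generated $\Gamma(\mathcal{A})$-algebra, i.e.\ $\mathrm{Spec}(F(\mathcal{A})) \to \mathrm{Spec}(\Gamma(\mathcal{A}))$ is of finite type. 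Combined with the first part, the equivalence gives universal submersivity.

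The main obstacle I anticipate is the descent-to-a-valuation-ring step in the first part: extracting from the mere nonvanishing of $B = F(\mathcal{A}) \otimes_{\Gamma(\mathcal{A})} V$ an actual point of $\mathrm{Spec}(B)$ whose image in $\mathrm{Spec}(V)$ is the generic point, and then a dominating valuation ring. One must be a little careful that $V \hookrightarrow \Gamma(B)$ really does guarantee a prime of $B$ contracting to $(0)$ in $V$ — the point being that $B \otimes_V \mathrm{Frac}(V)$ is nonzero because $V \setminus \{0\}$ consists of nonzerodivisors on (a submodule containing $V$ inside) $B$ — and that the extension of residue fields then supports the desired valuation ring by the usual existence theorem for valuation rings dominating a given local domain. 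I also expect a minor subtlety in verifying that $\mathcal{A} \otimes_{\Gamma(\mathcal{A})} V$ genuinely fits the framework as a weakly commutative locally finite ring object for the base change $R \to V$, but this is handled verbatim by the discussion preceding Lemma \ref{lemma-adjoint-new} together with Lemma \ref{lemma-tensor-locally-finite}, so it is bookkeeping rather than a real difficulty. The second part is then essentially immediate from the quoted equivalence between universally subtrusive and universally submersive for finite type morphisms to a Noetherian base.
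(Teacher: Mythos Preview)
Your argument for the second part (universally submersive under (3)--(5)) matches the paper's and is fine. The first part, however, has a genuine gap at exactly the step you flagged as the ``main obstacle,'' and the obstacle is not overcome.

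You correctly establish that $V \hookrightarrow \Gamma(B) \subset B$ where $B = F(\mathcal{A}) \otimes_{\Gamma(\mathcal{A})} V$, and you find a minimal prime $\mathfrak{q}$ of $B$ lying over $(0) \subset V$. But then you write ``let $V'$ be a valuation ring of $\mathrm{Frac}(B/\mathfrak{q})$ containing the image of $V$'' and immediately assert both that $V \to V'$ is local \emph{and} that there is a morphism $\mathrm{Spec}(V') \to \mathrm{Spec}(B)$. These two requirements pull against each other. A morphism $\mathrm{Spec}(V') \to \mathrm{Spec}(B)$ needs a ring map $B/\mathfrak{q} \to V'$, i.e.\ $B/\mathfrak{q} \subset V'$; the existence theorem you invoke only produces a valuation ring of $\mathrm{Frac}(B/\mathfrak{q})$ dominating the local ring $V$, with no reason for it to contain $B/\mathfrak{q}$. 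What you actually need is a prime $\mathfrak{p}' \supset \mathfrak{q}$ in $B$ with $\mathfrak{p}' \cap V = \mathfrak{m}_V$, and then take $V'$ dominating $(B/\mathfrak{q})_{\mathfrak{p}'}$. But the existence of such a $\mathfrak{p}'$ is precisely what is at stake, and nothing in your argument uses axiom (A) to secure it. (Concretely: if $B/\mathfrak{q}$ happened to equal $\mathrm{Frac}(V)$, no such $\mathfrak{p}'$ exists; your argument does not rule this out for the particular $\mathfrak{q}$ you chose.)

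The paper's proof handles this by working with the entire closure of the generic fiber rather than a single component. One forms the ideal object $\mathcal{I} = \ker(\mathcal{A} \otimes_{\Gamma(\mathcal{A})} V \to \mathcal{A} \otimes_{\Gamma(\mathcal{A})} K)$ inside $\mathcal{C}$, so that $F(\mathcal{I})$ cuts out the closure of $f^{-1}(\eta)$ in $\mathrm{Spec}(B)$. The quotient $(\mathcal{A} \otimes_{\Gamma(\mathcal{A})} V)/\mathcal{I}$ is again a locally finite weakly commutative ring object, so Lemma~\ref{lemma-surjective} gives surjectivity of $\mathrm{Spec}(F(\cdot)) \to \mathrm{Spec}(\Gamma(\cdot))$ for it. The crucial use of (A) is then to the surjection $\mathcal{A} \otimes_{\Gamma(\mathcal{A})} V \twoheadrightarrow (\mathcal{A} \otimes_{\Gamma(\mathcal{A})} V)/\mathcal{I}$: combined with Lemma~\ref{lemma-adjoint-new}(2), every element of $\Gamma$ of the quotient has a power lying in the image of $V$, so $V \to \Gamma((\mathcal{A} \otimes_{\Gamma(\mathcal{A})} V)/\mathcal{I})$ is integral. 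Integrality forces $\mathrm{Spec}(\Gamma(\cdot)) \to \mathrm{Spec}(V)$ to be surjective, and composing with the previous surjection shows the closure of the generic fiber hits the closed point of $\mathrm{Spec}(V)$. This is where adequacy does real work, and it is the step your proposal is missing.
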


\begin{proof}
To show the first part, let 
$\text{Spec}(V) \to \text{Spec}(\Gamma(\mathcal{A}))$ be a 
morphism where $V$ is a valuation ring with fraction field $K$.  
We must show that 
$$
f: \text{Spec} ( F(\mathcal{A}) \otimes_{\Gamma(\mathcal{A})} V)
 \longrightarrow \
 \text{Spec}(V)
 $$
is subtrusive.  Let $\eta \in \text{Spec}(V)$ be the generic point.  It suffices to show
that the closure of $f^{-1}(\eta)$ in 
$\text{Spec} ( F(\mathcal{A}) \otimes_{\Gamma(\mathcal{A})} V)$ 
surjects onto $\text{Spec} (V)$.
If we set 
$$
\mathcal{I} = \text{ker}( \mathcal{A} \otimes_{\Gamma(\mathcal{A})} V 
\longrightarrow 
\mathcal{A} \otimes_{\Gamma(\mathcal{A})} K)
$$ 
then $F(\mathcal{I})$ is the kernel of 
$F(\mathcal{A}) \otimes_{\Gamma(\mathcal{A})} V 
\to F(\mathcal{A}) \otimes_{\Gamma(\mathcal{A})} K$ 
and defines the closure of $f^{-1}(\eta)$.  The ring object 
$(\mathcal{A} \otimes_{\Gamma(\mathcal{A})} V) / \mathcal{I}$ is weakly commutative 
and locally finite.  By Lemma \ref{lemma-surjective}, 
$$
\text{Spec}( F((\mathcal{A} \otimes_{\Gamma(\mathcal{A})} V) / \mathcal{I}) )
\longrightarrow 
\text{Spec}(
\Gamma((\mathcal{A} \otimes_{\Gamma(\mathcal{A})} V) / \mathcal{I})
)
$$
is surjective.  Axiom (A) applied to the surjection 
$\mathcal{A} \otimes_{\Gamma(\mathcal{A})} V 
\to (\mathcal{A} \otimes_{\Gamma(\mathcal{A})} V)/ \mathcal{I}$
implies that
$$
\text{Spec}(
\Gamma((\mathcal{A} \otimes_{\Gamma(\mathcal{A})} V) / \mathcal{I})
)
\longrightarrow 
\text{Spec} (V)
$$
is integral.  Therefore the composition of the two morphisms above is
surjective so that the closure of $f^{-1}(\eta)$ surjects onto
$\text{Spec}(V)$.

\medskip \noindent
The hypotheses in the second part imply that
$\Gamma(\mathcal{A}) \to F(\mathcal{A})$ 
is of finite type and $\Gamma(\mathcal{A})$ is Noetherian,
hence the remark preceding the lemma applies.
\end{proof}

\noindent
Below we will use the following algebraic result to get finite generation.

\begin{theorem}
\label{theorem-finite-type-local}
Consider ring maps $R \to B \to A$ such that
\begin{enumerate}
\item $B$ and $R$ are noetherian,
\item $R \to A$ is of finite type, and
\item $\text{Spec}(A) \to \text{Spec}(B)$ is universally submersive.
\end{enumerate}
Then $R \to B$ is of finite type.
\end{theorem}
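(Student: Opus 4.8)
**The plan is to reduce to the key case where $B$ is generated by elements of bounded degree and apply a theorem of the Eisenbud–Hochster or Nagata flavor on finite generation under submersive maps.** More concretely, I would proceed as follows. First, since $R$ is Noetherian and $R \to A$ is of finite type, $A$ is Noetherian, so all rings in sight are Noetherian. Replacing $R$ by its image in $B$ is harmless for the conclusion, so I may assume $R \to B$ is injective; and the problem is really about showing $B$ is a finitely generated $R$-algebra given that it sits between $R$ and the finitely generated $R$-algebra $A$ in such a way that $\operatorname{Spec}(A) \to \operatorname{Spec}(B)$ is universally submersive.

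The heart of the matter is a descent-of-finite-generation statement. I would invoke the fact (essentially the content of the Noetherian approximation argument combined with the valuative characterization of submersive morphisms) that a universally submersive morphism between Noetherian schemes is, roughly speaking, "close to being subtrusive/universally open on a dense open", and more importantly that submersivity descends topological finiteness data. The cleanest route: write $B = \operatorname{colim} B_\lambda$ as the filtered union of its finitely generated $R$-subalgebras. For each $\lambda$, let $A_\lambda$ be the $B_\lambda$-subalgebra of $A$ generated by a fixed finite set of $R$-algebra generators of $A$; then $A_\lambda$ is a finitely generated $B_\lambda$-algebra with $\operatorname{colim} A_\lambda = A$, and $A$ is $A_\lambda \otimes_{B_\lambda} B$ for $\lambda$ large. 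The plan is to show that for $\lambda$ sufficiently large, $\operatorname{Spec}(A_\lambda) \to \operatorname{Spec}(B_\lambda)$ is already universally submersive, and then to conclude via a theorem that says: if $R \to B_\lambda \to A_\lambda$ with $R, B_\lambda$ Noetherian, $R \to A_\lambda$ of finite type (which it is, being finite type over the finite type $R$-algebra... wait, $B_\lambda$ is finite type over $R$ so $A_\lambda$ is too), and the spectrum map universally submersive, then $B \to B_\lambda$... no — rather, one shows $B = B_\lambda$ for $\lambda$ large, which gives finite generation.

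**The hard part will be the spreading-out step: showing that universal submersivity of $\operatorname{Spec}(A) \to \operatorname{Spec}(B)$ descends to $\operatorname{Spec}(A_\lambda) \to \operatorname{Spec}(B_\lambda)$ for $\lambda \gg 0$.** Universal submersivity is not an obviously constructible/limit-stable condition because it quantifies over all base changes; the standard workaround is the valuative criterion (universally subtrusive = satisfies the valuative lifting property of the Rydh characterization), which is a finite-type condition one can hope to spread out using that valuation rings of finite-type base schemes are controlled. I expect one must use the equivalence, valid under the Noetherian finite-type hypotheses, between universally submersive and universally subtrusive, and then that subtrusiveness (a valuative lifting property) does spread out by a limit argument. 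Once $\operatorname{Spec}(A_\lambda) \to \operatorname{Spec}(B_\lambda)$ is universally submersive, surjectivity of this map forces $B_\lambda \hookrightarrow A_\lambda$ to be such that every prime of $B_\lambda$ is hit; combined with $B_\lambda \subset B \subset A$ and the goal of showing $B_\lambda = B$, one argues: if $B_\lambda \subsetneq B$, pick $b \in B \setminus B_\lambda$; then $b$ is generic... actually the clean finish is that submersivity plus $B$ integral-ish over $B_\lambda$ in the relevant sense forces equality, using that $\operatorname{Spec}(A) \to \operatorname{Spec}(B_\lambda)$ factors through $\operatorname{Spec}(B)$ and both are submersive, so $\operatorname{Spec}(B) \to \operatorname{Spec}(B_\lambda)$ is submersive and a universal homeomorphism-type argument pins it down — but honestly the genuinely delicate point, and where a cited external theorem (Nagata, or the paper's own Theorem~\ref{theorem-finite-type-local} itself may be proven by reference to such) is invoked, is converting "universally submersive onto Noetherian base" into "finite type of the base", which is a nontrivial commutative-algebra input I would cite rather than reprove.
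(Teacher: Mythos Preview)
The paper does not actually prove this theorem: its entire proof reads ``This is a special case of Theorem 6.2.1 of \cite{alper_adequate}. It was first discovered while writing an earlier version of this paper.'' So there is no argument in the paper against which to compare your sketch; the result is simply imported from an external reference.

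That said, your sketch as written has a genuine gap. The approximation step $B = \operatorname{colim} B_\lambda$ is fine, and one can indeed hope to spread out universal submersivity (via the equivalence with subtrusiveness under Noetherian finite-type hypotheses, as in Rydh's work). But your intended conclusion ``$B = B_\lambda$ for $\lambda$ large'' does not follow from anything you have set up: knowing that $\operatorname{Spec}(A_\lambda) \to \operatorname{Spec}(B_\lambda)$ is universally submersive tells you nothing that forces the inclusion $B_\lambda \hookrightarrow B$ to be an equality. There is no reason $B$ should be integral over $B_\lambda$, and the factorization $\operatorname{Spec}(A) \to \operatorname{Spec}(B) \to \operatorname{Spec}(B_\lambda)$ being submersive does not make $\operatorname{Spec}(B) \to \operatorname{Spec}(B_\lambda)$ a homeomorphism (submersive bijections need not be isomorphisms of rings). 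You seem to recognize this yourself at the end, where you concede that the real content is a ``nontrivial commutative-algebra input I would cite rather than reprove'' --- but that input \emph{is} the theorem. In other words, your reductions are circular: the step you flag as hard is exactly the statement you are trying to establish. The honest proof here is the paper's: cite the external result.
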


\begin{proof}
This is a special case of Theorem 6.2.1 of \cite{alper_adequate}.
It was first discovered while writing an earlier version of this paper.
\end{proof}

\section{The main result}
\label{section-main}

\noindent
The main argument in the proof of Theorem \ref{theorem-finite-type} 
is an induction argument. In order to formulate
it we use the following condition.

\begin{definition}
\label{definition-star}
Let
$(R \to A, \mathcal{C}, \otimes, F, \gamma, \mathcal{O}, \mu)$
be as in Situation \ref{situation-category}. 
Let $\mathcal{A}$ be a weakly
commutative ring object. Consider the following property of $\mathcal{A}$
\begin{enumerate}
\item[($\star$)] The ring $\Gamma(\mathcal{A})$ is a finite type $R$-algebra
and for every finite type module $\mathcal{F}$ over $\mathcal{A}$ the
$\Gamma(\mathcal{A})$-module $\Gamma(\mathcal{F})$ is finite.
\end{enumerate}
\end{definition}

\begin{lemma}
\label{lemma-quotient-star}
Let
$(R \to A, \mathcal{C}, \otimes, F, \gamma, \mathcal{O}, \mu)$
be as in Situation \ref{situation-category}.
Let $\mathcal{A} \to \mathcal{B}$ be a 
surjection of ring objects.
Assume
\begin{enumerate}
\item $R$ is Noetherian and axiom (A) holds,
\item $\mathcal{A}$ is locally finite and weakly commutative, and
\item $\Gamma(\mathcal{B})$ is a finitely generated $R$-algebra.
\end{enumerate}
Then $\Gamma(\mathcal{B})$ is a finite $\Gamma(\mathcal{A})$-module
and there exists a finitely generated $R$-subalgebra
$B \subset \Gamma(\mathcal{A})$ such that
$$
\text{Im}(\Gamma(\mathcal{A}) \longrightarrow \Gamma(\mathcal{B}))
=
\text{Im}(B \longrightarrow \Gamma(\mathcal{B})).
$$
\end{lemma}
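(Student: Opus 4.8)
The plan is to reduce the statement to commutative algebra, using axiom (A) to produce integrality and the Artin--Tate lemma to produce finite generation. Set $C = \Gamma(\mathcal{B})$ and let $C' = \mathrm{Im}(\Gamma(\mathcal{A}) \to \Gamma(\mathcal{B})) \subset C$. Since $\mathcal{A} \to \mathcal{B}$ is a surjection of ring objects, $F(\mathcal{B})$ is a quotient ring of the commutative ring $F(\mathcal{A})$, so $\mathcal{B}$ is weakly commutative and $C = \Gamma(\mathcal{B}) \subset F(\mathcal{B})$ is a commutative ring. By hypothesis (3) the ring $C$ is a finitely generated $R$-algebra, hence Noetherian since $R$ is Noetherian by hypothesis (1). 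The $R$-algebra structure on $C$ is induced by $R \to \Gamma(\mathcal{O}) \to \Gamma(\mathcal{A}) \to \Gamma(\mathcal{B})$, so it factors through $C'$; consequently $C$ is even a finitely generated $C'$-algebra (generated by the same elements).

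First I would prove that $C$ is a finite $C'$-module. Given any $f \in C = \Gamma(\mathcal{B})$, axiom (A) applied to the surjection $\mathcal{A} \to \mathcal{B}$ — legitimate, as $\mathcal{A}$ is locally finite and weakly commutative and $\mathcal{B}$ is weakly commutative — yields $n > 0$ and $g \in \Gamma(\mathcal{A})$ with $g \mapsto f^n$; thus $f^n \in C'$, so $f$ is integral over $C'$. Applying this to a finite set of $C'$-algebra generators of $C$, we see $C$ is integral over $C'$ and finitely generated as a $C'$-algebra, hence a finite $C'$-module. Since $\Gamma(\mathcal{A})$ acts on $C$ through $C'$, this already gives the first assertion: $\Gamma(\mathcal{B})$ is a finite $\Gamma(\mathcal{A})$-module.

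Next, the Artin--Tate lemma applied to $R \subset C' \subset C$ — using that $R$ is Noetherian, $C$ is a finitely generated $R$-algebra, and $C$ is a finite $C'$-module — shows that $C'$ is a finitely generated $R$-algebra. Pick finitely many elements of $C'$ generating it as an $R$-algebra, lift each to an element of $\Gamma(\mathcal{A})$, and let $B \subset \Gamma(\mathcal{A})$ be the $R$-subalgebra they generate; this makes sense because $\mathcal{A}$ weakly commutative implies $\Gamma(\mathcal{A})$ is a commutative $R$-algebra, and $B$ is then a finitely generated $R$-algebra. By construction $\mathrm{Im}(B \to \Gamma(\mathcal{B})) \subset C'$, while $\mathrm{Im}(B \to \Gamma(\mathcal{B}))$ contains the image of $R$ and the chosen generators of $C'$, so it equals $C'$. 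This is precisely the claimed identity $\mathrm{Im}(\Gamma(\mathcal{A}) \to \Gamma(\mathcal{B})) = \mathrm{Im}(B \to \Gamma(\mathcal{B}))$.

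The only genuinely non-formal ingredients are axiom (A), which supplies integrality of $C$ over $C'$, and the Artin--Tate lemma; the rest is bookkeeping. I expect the main point requiring care is verifying the hypotheses of axiom (A) — in particular checking that $\mathcal{B}$ is weakly commutative — and tracking that the structure maps $R \to \Gamma(\mathcal{O}) \to \Gamma(\mathcal{A}) \to \Gamma(\mathcal{B})$ factor as claimed, so that "finitely generated as an $R$-algebra" can be upgraded to "finitely generated as a $C'$-algebra" before invoking integrality.
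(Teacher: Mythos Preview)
Your proof is correct and follows essentially the same route as the paper: both use axiom (A) to show that $\Gamma(\mathcal{B})$ is integral (hence finite) over the image $C'$ of $\Gamma(\mathcal{A})$, and then use a Noetherian finiteness argument to exhibit $C'$ as the image of a finitely generated $R$-subalgebra of $\Gamma(\mathcal{A})$. The only cosmetic difference is that you invoke the Artin--Tate lemma by name, whereas the paper carries out that argument by hand (first producing $B_0 = R[g_1,\dots,g_n] \subset \Gamma(\mathcal{A})$ over which $\Gamma(\mathcal{B})$ is finite, then adjoining finitely many module generators of $C'$ over the Noetherian ring $B_0$).
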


\begin{proof}
Since $\mathcal{A}$ is weakly commutative, so is $\mathcal{B}$.
Hence $\Gamma(\mathcal{B})$ is a commutative $R$-algebra.
Pick $f_1, \ldots, f_n \in \Gamma(\mathcal{B})$ which generate
as an $R$-algebra. By axiom (A) we can find
$g_1, \ldots, g_n \in \Gamma(\mathcal{A})$ which map to
$f_1^{n_1}, \ldots, f_n^{n_n}$ in $\Gamma(\mathcal{B})$
for some $n_i > 0$. Then we see that $\Gamma(\mathcal{B})$
is generated by the elements
$$
f_1^{e_1} \ldots f_n^{e_n},\quad 0 \leq e_i \leq n_i - 1
$$
and so $\Gamma(\mathcal{B})$ is finite over $\Gamma(\mathcal{A})$.
As a first approximation, let
$B = R[g_1, \ldots, g_n] \subset \Gamma(\mathcal{A})$.
Then the equality of the lemma may not hold, but in any case
$\Gamma(\mathcal{A})$ is finite over $B$. Since $B$ is a Noetherian
ring, $\text{Im}(\Gamma(\mathcal{A}) \to \Gamma(\mathcal{B}))$ 
is a finite $B$-module so be choose finitely many generators
$g_{n + 1}, \ldots, g_{n + m} \in \Gamma(\mathcal{A})$.
 Hence by setting $B = R[g_1, \ldots, g_{n + m}]$, the lemma is proved.
\end{proof}

\begin{lemma}
\label{lemma-sandwich}
Let
$(R \to A, \mathcal{C}, \otimes, F, \gamma, \mathcal{O}, \mu)$
be as in Situation \ref{situation-category}.
Let $\mathcal{A}$ be a ring
object and let $\mathcal{I} \subset \mathcal{A}$ be a left ideal.
Assume
\begin{enumerate}
\item $R$ is Noetherian and axiom (A) holds,
\item $\mathcal{A}$ is locally finite and weakly commutative,
\item ($\star$) holds for $\mathcal{A}/\mathcal{I}$, and
\item there is a quotient $\mathcal{A} \to \mathcal{A}'$ such that
($\star$) holds for $\mathcal{A}'$ and such that $\mathcal{I}$ is
a finite $\mathcal{A}'$-module.
\end{enumerate}
Then ($\star$) holds for $\mathcal{A}$.
\end{lemma}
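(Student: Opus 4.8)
The plan is to treat the two parts of $(\star)$ --- finite generation of the ring $\Gamma(\mathcal{A})$, and finiteness of $\Gamma(\mathcal{F})$ for finite type $\mathcal{F}$ --- separately, in that order, using the exact sequence $0 \to \mathcal{I} \to \mathcal{A} \to \mathcal{A}/\mathcal{I} \to 0$, the left exactness of $\Gamma$, and the hypotheses $(\star)$ for $\mathcal{A}/\mathcal{I}$ and for $\mathcal{A}'$.

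First I would prove $\Gamma(\mathcal{A})$ is of finite type over $R$. Since $(\star)$ holds for $\mathcal{A}/\mathcal{I}$ and for $\mathcal{A}'$, the rings $\Gamma(\mathcal{A}/\mathcal{I})$ and $\Gamma(\mathcal{A}')$ are finite type $R$-algebras, so Lemma \ref{lemma-quotient-star} applies to both surjections $\mathcal{A} \to \mathcal{A}/\mathcal{I}$ and $\mathcal{A} \to \mathcal{A}'$. It produces: (i) $\Gamma(\mathcal{A}/\mathcal{I})$ and $\Gamma(\mathcal{A}')$ are finite $\Gamma(\mathcal{A})$-modules, and (ii) finite type $R$-subalgebras $B_1, B_2 \subseteq \Gamma(\mathcal{A})$ whose images in $\Gamma(\mathcal{A}/\mathcal{I})$, resp.\ $\Gamma(\mathcal{A}')$, coincide with that of $\Gamma(\mathcal{A})$. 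As $\Gamma$ is left exact, $\Gamma(\mathcal{I}) = \text{Ker}(\Gamma(\mathcal{A}) \to \Gamma(\mathcal{A}/\mathcal{I}))$, and (ii) for $B_1$ gives $\Gamma(\mathcal{A}) = B_1 + \Gamma(\mathcal{I})$. Now the $\mathcal{A}$-module structure on the left ideal $\mathcal{I}$ is the restriction along $\mathcal{A} \to \mathcal{A}'$ of its $\mathcal{A}'$-module structure, so $\Gamma(\mathcal{A})$ acts on $\Gamma(\mathcal{I})$ through its image $C$ in $\Gamma(\mathcal{A}')$; by $(\star)$ for $\mathcal{A}'$ and hypothesis (4), $\Gamma(\mathcal{I})$ is a finite $\Gamma(\mathcal{A}')$-module, and $\Gamma(\mathcal{A}')$ is finite over $C$, hence $\Gamma(\mathcal{I})$ is a finite $C$-module; since $C$ is a quotient of $B_2$, it is a finite $B_2$-module. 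Taking $B$ to be the finite type $R$-subalgebra of $\Gamma(\mathcal{A})$ generated by $B_1$ and $B_2$, we conclude $\Gamma(\mathcal{A}) = B_1 + \Gamma(\mathcal{I})$ is a finite $B$-module. As $R$ is Noetherian, $B$ is Noetherian, so $\Gamma(\mathcal{A})$ is a finite type $R$-algebra and is itself Noetherian.

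For the second part, let $\mathcal{F}$ be a finite type $\mathcal{A}$-module, and let $\mathcal{I}\mathcal{F} \subseteq \mathcal{F}$ be the image (in the abelian category $\text{Mod}_\mathcal{A}$, cf.\ Lemma \ref{lemma-module-objects-abelian}) of the composite $\mathcal{I} \otimes \mathcal{F} \to \mathcal{A} \otimes \mathcal{F} \to \mathcal{F}$. One checks that $F(\mathcal{I}\mathcal{F}) = F(\mathcal{I})F(\mathcal{F})$, that $\mathcal{I}\mathcal{F}$ is an $\mathcal{A}'$-module (its $\mathcal{A}$-action kills $\text{Ker}(\mathcal{A} \to \mathcal{A}')$, as that ideal annihilates $\mathcal{I}$), and that $\mathcal{I}\mathcal{F}$ is then of finite type over $\mathcal{A}'$ ($F(\mathcal{I})F(\mathcal{F})$ is generated over $F(\mathcal{A}')$ by products of generators of $F(\mathcal{I})$ over $F(\mathcal{A}')$ and of $F(\mathcal{F})$ over $F(\mathcal{A})$). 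Likewise $\mathcal{F}/\mathcal{I}\mathcal{F}$ is a finite type $\mathcal{A}/\mathcal{I}$-module. Using $(\star)$ for $\mathcal{A}'$ and for $\mathcal{A}/\mathcal{I}$, together with the finiteness of $\Gamma(\mathcal{A}')$ and $\Gamma(\mathcal{A}/\mathcal{I})$ over $\Gamma(\mathcal{A})$ from (i), we see that $\Gamma(\mathcal{I}\mathcal{F})$ and $\Gamma(\mathcal{F}/\mathcal{I}\mathcal{F})$ are finite $\Gamma(\mathcal{A})$-modules. The exact sequence $0 \to \Gamma(\mathcal{I}\mathcal{F}) \to \Gamma(\mathcal{F}) \to \Gamma(\mathcal{F}/\mathcal{I}\mathcal{F})$ then exhibits $\Gamma(\mathcal{F})$ as an extension of a submodule of a finite $\Gamma(\mathcal{A})$-module by a finite $\Gamma(\mathcal{A})$-module; since $\Gamma(\mathcal{A})$ is Noetherian, $\Gamma(\mathcal{F})$ is finite over $\Gamma(\mathcal{A})$. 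This establishes $(\star)$ for $\mathcal{A}$.

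The main obstacle is the finite generation of $\Gamma(\mathcal{A})$: the naive dévissage only shows $\Gamma(\mathcal{A})$ is finite over itself, so one is forced to use the sharper conclusion of Lemma \ref{lemma-quotient-star} --- the explicit finite type subalgebras $B_1, B_2$ inside $\Gamma(\mathcal{A})$ --- together with careful bookkeeping of the fact that $\Gamma(\mathcal{A})$ acts on $\Gamma(\mathcal{I})$ only through $\mathcal{A}'$. Once $\Gamma(\mathcal{A})$ is known to be Noetherian, the module statement is a routine extension argument; the one point meriting care there is the identification of the $\mathcal{A}'$- and $\mathcal{A}/\mathcal{I}$-module structures on $\mathcal{I}\mathcal{F}$ and $\mathcal{F}/\mathcal{I}\mathcal{F}$ and their finiteness, which rests on weak commutativity and on $\text{Ker}(\mathcal{A} \to \mathcal{A}')$ annihilating $\mathcal{I}$.
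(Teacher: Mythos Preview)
Your proof is correct and follows essentially the same strategy as the paper's: use Lemma~\ref{lemma-quotient-star} on both quotients $\mathcal{A} \to \mathcal{A}/\mathcal{I}$ and $\mathcal{A} \to \mathcal{A}'$, exploit the fact that the $\Gamma(\mathcal{A})$-action on $\Gamma(\mathcal{I})$ factors through $\Gamma(\mathcal{A}')$, and then handle $\Gamma(\mathcal{F})$ via the $\mathcal{I}\mathcal{F}$-filtration. The only cosmetic difference is that the paper picks generators $x_1,\ldots,x_s$ of $\Gamma(\mathcal{I})$ and shows $\Gamma(\mathcal{A}) = B[x_1,\ldots,x_s]$ by an element chase, whereas you phrase the same step as ``$\Gamma(\mathcal{I})$ is a finite $B_2$-module, hence $\Gamma(\mathcal{A}) = B_1 + \Gamma(\mathcal{I})$ is finite over $B$''; these are the same argument.
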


\begin{proof}
Since $\mathcal{A}$ is weakly commutative and locally finite so are
$\mathcal{A}/\mathcal{I}$ and $\mathcal{A}'$. By
Lemma \ref{lemma-quotient-star}
the rings $\Gamma(\mathcal{A}')$ and $\Gamma(\mathcal{A}/\mathcal{I})$
are finite $\Gamma(\mathcal{A})$-algebras. Consider the exact sequence
$$
0 \to
\Gamma(\mathcal{I}) \to
\Gamma(\mathcal{A}) \to \Gamma(\mathcal{A}/\mathcal{I}).
$$
By ($\star$) for $\mathcal{A}'$ we see that $\Gamma(\mathcal{I})$ is a finite
$\Gamma(\mathcal{A}')$-module, hence a finite $\Gamma(\mathcal{A})$-module.
Choose generators $x_1, \ldots, x_s \in \Gamma(\mathcal{I})$ as a
$\Gamma(\mathcal{A})$-module. By Lemma \ref{lemma-quotient-star}
we can find a finite type $R$-subalgebra $B \subset \Gamma(\mathcal{A})$
such that the image of $B$ in $\Gamma(\mathcal{A}')$ and the image
of $B$ in $\Gamma(\mathcal{A}/\mathcal{I})$ is the same as the image
of $\Gamma(\mathcal{A})$ in those rings. We claim that
$$
\Gamma(\mathcal{A}) = B[x_1, \ldots, x_s]
$$
as subrings of $\Gamma(\mathcal{A})$. Namely, if $h \in \Gamma(\mathcal{A})$
then we can find an element $b \in B$ which has the same image as $h$
in $\Gamma(\mathcal{A}/\mathcal{I})$. Hence replacing $h$ by $h - b$ we
may assume $h \in \Gamma(\mathcal{I})$. By our choice of
$x_1, \ldots, x_s$ we may write $h = \sum a_i x_i$ for some
$a_i \in \Gamma(\mathcal{A})$. But since $\mathcal{I}$ is a
$\mathcal{A}'$-module, we can write this as
$h = \sum a_i' x_i$ with $a_i' \in \Gamma(\mathcal{A}')$ the image of
$a_i$. By choice of $B$ we can find $b_i \in B$ mapping to $a_i'$.
Hence we see that $h \in B[x_1, \ldots, x_s]$ as desired. This proves that
$\Gamma(\mathcal{A})$ is a finitely generated $R$-algebra.

\medskip\noindent
Let $\mathcal{F}$ be a finite type $\mathcal{A}$-module.
Set $\mathcal{I}\mathcal{F}$ equal to the image of the map
$\mathcal{I} \otimes \mathcal{F} \to \mathcal{F}$ which is the
restriction of the multiplication map of $\mathcal{F}$.
Consider the exact sequence
$$
0 \to
\mathcal{I}\mathcal{F} \to
\mathcal{F} \to
\mathcal{F}/\mathcal{I}\mathcal{F} \to 0
$$
This gives rise to a similar short exact sequence on applying $F$, and a
surjective map
$F(\mathcal{I}) \otimes_A F(\mathcal{F}) \to F(\mathcal{I}\mathcal{F})$
which factors through $F(\mathcal{I}) \otimes_{F(\mathcal{A})} F(\mathcal{F})$
as $\mathcal{A}$ is weakly commutative.
Since $F(\mathcal{F})$ is finite as a $F(\mathcal{A})$-module, and
$F(\mathcal{I})$ is finite as a $F(\mathcal{A}')$-module, we conclude
that $F(\mathcal{I}\mathcal{F})$ is a finite $F(\mathcal{A}')$-module, i.e.,
that $\mathcal{I}\mathcal{F}$ is a finite $\mathcal{A}'$-module.
In the same way we see that $\mathcal{F}/\mathcal{I}\mathcal{F}$
is a finite $\mathcal{A}/\mathcal{I}$-module. Hence in the exact sequence
$$
0 \to
\Gamma(\mathcal{I}\mathcal{F}) \to
\Gamma(\mathcal{F}) \to
\Gamma(\mathcal{F}/\mathcal{I}\mathcal{F})
$$
we see that the modules on the left and the right are finite
$\Gamma(\mathcal{A})$-modules. Since $\Gamma(\mathcal{A})$ is Noetherian
by the result of the preceding paragraph we see that $\Gamma(\mathcal{F})$
is a finite $\Gamma(\mathcal{A})$-module. This conclude the proof that
property ($\star$) holds for $\mathcal{A}$.
\end{proof}

\begin{lemma}
\label{lemma-nilpotent}
Let
$(R \to A, \mathcal{C}, \otimes, F, \gamma, \mathcal{O}, \mu)$
be as in Situation \ref{situation-category}.
Let $\mathcal{A}$ be a ring object,
and let $\mathcal{I} \subset \mathcal{A}$ be a left ideal.
Assume that
\begin{enumerate}
\item axioms (N) and (A) hold and $R$ is Noetherian,
\item $\mathcal{A}$ is locally finite, weakly commutative and of finite type,
\item $\mathcal{I}^n = 0$ for some $n \geq 0$, and
\item $\mathcal{A}/\mathcal{I}$ has property ($\star$).
\end{enumerate}
Then $\mathcal{A}$ has property ($\star$).
\end{lemma}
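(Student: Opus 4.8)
The plan is to induct on $n$, using the filtration of $\mathcal{A}$ by powers of $\mathcal{I}$ and reducing each step to Lemma \ref{lemma-sandwich}. Write $\mathcal{I}^k$ for the image of the iterated multiplication $\mathcal{I}^{\otimes k}\to\mathcal{A}$; since $\mathcal{A}$ is weakly commutative this is a two-sided ideal (Lemma \ref{lemma-quotient-weakly-commutative}) and $F(\mathcal{I}^k)=F(\mathcal{I})^k$ by exactness of $F$. Set $\mathcal{A}_k=\mathcal{A}/\mathcal{I}^k$ for $0\le k\le n$, so that $\mathcal{A}_1=\mathcal{A}/\mathcal{I}$ has property $(\star)$ by hypothesis (4), while $\mathcal{A}_n=\mathcal{A}$ because $\mathcal{I}^n=0$; we may of course assume $n\ge 2$. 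Each $\mathcal{A}_k$ is a quotient of $\mathcal{A}$, hence locally finite (Lemma \ref{lemma-locally-finite-abelian}) and weakly commutative (Lemma \ref{lemma-quotient-weakly-commutative}). It therefore suffices to prove: if $\mathcal{A}_{k-1}$ has $(\star)$, then so does $\mathcal{A}_k$, for $2\le k\le n$.

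Fix such a $k$ and consider the surjection $\mathcal{A}_k\to\mathcal{A}_{k-1}$ with kernel $\mathcal{K}=\mathcal{I}^{k-1}/\mathcal{I}^k$, a left ideal of $\mathcal{A}_k$. Because $2(k-1)\ge k$, the ideal $\mathcal{I}^{k-1}\cdot\mathcal{I}^{k-1}=\mathcal{I}^{2k-2}$ lies in $\mathcal{I}^k$, so $\mathcal{K}^2=0$ in $\mathcal{A}_k$; in particular $\mathcal{K}$ is canonically a module over $\mathcal{A}_k/\mathcal{K}=\mathcal{A}_{k-1}$. I claim $\mathcal{K}$ is a finite $\mathcal{A}_{k-1}$-module. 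By axiom (N) and hypothesis (2), $F(\mathcal{A})$ is a finitely generated algebra over the Noetherian ring $A$, hence Noetherian; so the ideal $F(\mathcal{I})^{k-1}$ is a finite $F(\mathcal{A})$-module, and then $F(\mathcal{K})=F(\mathcal{I})^{k-1}/F(\mathcal{I})^k$ is a finite $F(\mathcal{A})$-module. Since $F(\mathcal{I})^{k-1}$ annihilates $F(\mathcal{K})$, the action factors through $F(\mathcal{A}_{k-1})$, so $F(\mathcal{K})$ is finite over $F(\mathcal{A}_{k-1})=F(\mathcal{A}_k/\mathcal{K})$, which is exactly what it means for $\mathcal{K}$ to be a finite $\mathcal{A}_{k-1}$-module.

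Now apply Lemma \ref{lemma-sandwich} to the ring object $\mathcal{A}_k$ and its ideal $\mathcal{K}$: hypothesis (1) is part of our assumptions; (2) holds by the remarks of the first paragraph; (3) is the inductive hypothesis, namely $(\star)$ for $\mathcal{A}_k/\mathcal{K}=\mathcal{A}_{k-1}$; and (4) is witnessed by the quotient $\mathcal{A}_k\to\mathcal{A}_{k-1}$ together with the finiteness of $\mathcal{K}$ just proved. The lemma then yields $(\star)$ for $\mathcal{A}_k$, and running the induction up to $k=n$ gives $(\star)$ for $\mathcal{A}=\mathcal{A}_n$. The only substantive point is the claim that each graded piece $\mathcal{K}=\mathcal{I}^{k-1}/\mathcal{I}^k$ is finite over $\mathcal{A}_{k-1}$ --- this is where the Noetherian hypothesis (N) and the finite-type hypothesis on $\mathcal{A}$ enter --- together with correctly organizing the induction over the power filtration; everything else is bookkeeping and an appeal to Lemma \ref{lemma-sandwich}.
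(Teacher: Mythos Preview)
Your proof is correct and follows essentially the same approach as the paper's. The paper states the induction more tersely (``we may assume $\mathcal{I}^2=0$'') and then applies Lemma \ref{lemma-sandwich} once, whereas you unroll the same induction explicitly along the filtration $\mathcal{A}_1,\ldots,\mathcal{A}_n$; each of your steps is exactly the $\mathcal{K}^2=0$ case, and the finiteness argument for $\mathcal{K}$ via the Noetherianness of $F(\mathcal{A})$ is identical to the paper's.
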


\begin{proof}
We argue by induction on $n$ and hence we may assume that
$\mathcal{I}^2 = 0$. Then we get an exact sequence
$$
0 \to \mathcal{I} \to \mathcal{A} \to \mathcal{A}/\mathcal{I} \to 0.
$$
Because (N) holds and $\mathcal{A}$ is of finite type we see that
$F(\mathcal{A})$ is a finitely generated $A$-algebra hence Noetherian.
Thus $\mathcal{I}$ is a finite type $\mathcal{A}$-module, and hence also
a finite type $\mathcal{A}/\mathcal{I}$-module. This means that
Lemma \ref{lemma-sandwich} applies, and we win.
\end{proof}

\begin{theorem}
\label{theorem-finite-type}
Let
$(R \to A, \mathcal{C}, \otimes, F, \gamma, \mathcal{O}, \mu)$
be as in Situation \ref{situation-category}.
Assume
\begin{enumerate}
\item $R$ is Noetherian,
\item $R \to A$ is of finite type, and
\item the axioms (A) and (D) hold.
\end{enumerate}
Then for every finite type, locally finite,
weakly commutative ring object $\mathcal{A}$ of $\mathcal{C}$
property ($\star$) holds.
\end{theorem}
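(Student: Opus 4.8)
The plan is to prove property $(\star)$ for every finite type, locally finite, weakly commutative ring object $\mathcal{A}$ by \emph{Noetherian induction}. Since $F(\mathcal{A})$ is a finitely generated algebra over the Noetherian ring $A$, it is Noetherian, and because $F$ is faithful and exact the rule $\mathcal{I} \mapsto F(\mathcal{I})$ is an inclusion-preserving injection from the left ideals of $\mathcal{A}$ in $\mathcal{C}$ to the ideals of $F(\mathcal{A})$; hence the left ideals of $\mathcal{A}$ satisfy the ascending chain condition. As quotients $\mathcal{A}/\mathcal{I}$ of objects in our class are again of the same kind (using Lemma~\ref{lemma-locally-finite-abelian}, Lemma~\ref{lemma-quotient-weakly-commutative}, and exactness of $F$), it suffices to prove $(\star)$ for $\mathcal{A}$ under the inductive hypothesis that $(\star)$ holds for $\mathcal{A}/\mathcal{I}$ for every \emph{nonzero} left ideal $\mathcal{I} \subset \mathcal{A}$.

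Granting this hypothesis, I would first run a d\'evissage to reduce to the case that $F(\mathcal{A})$ is an integral domain. If $\mathcal{A}$ has a nonzero nilpotent left ideal $\mathcal{I}$, then $(\star)$ for the proper quotient $\mathcal{A}/\mathcal{I}$ together with Lemma~\ref{lemma-nilpotent} already gives $(\star)$ for $\mathcal{A}$; so we may assume there is none, and in particular that $\Gamma(\mathcal{A})$ is reduced. If $\Gamma(\mathcal{A})$ fails to be a domain, pick nonzero $a, b \in \Gamma(\mathcal{A})$ with $ab = 0$, put $\mathcal{I} = b\mathcal{A}$ (nonzero since $F(\mathcal{I}) = F(\mathcal{A})b \ne 0$) and let $\mathcal{J} \subset \mathcal{A}$ be the largest left ideal with $\mathcal{J}\mathcal{I} = 0$ (it exists by the chain condition and contains the nonzero ideal $a\mathcal{A}$); then $\mathcal{I}$ is a finite type module over $\mathcal{A}' := \mathcal{A}/\mathcal{J}$, both $\mathcal{A}/\mathcal{I}$ and $\mathcal{A}'$ satisfy $(\star)$ by induction, and Lemma~\ref{lemma-sandwich} yields $(\star)$ for $\mathcal{A}$. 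A further d\'evissage of the same type, now using that the Noetherian ring $F(\mathcal{A})$ has only finitely many minimal primes, reduces us to the case where $F(\mathcal{A})$ is a domain; then $\Gamma(\mathcal{A}) \subset F(\mathcal{A})$ is a domain as well.

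For the remaining case the key step is to show that $\Gamma(\mathcal{A})$ is Noetherian. Choosing a nonzero $f \in \Gamma(\mathcal{A})$ that is a non-unit in $F(\mathcal{A})$ (the degenerate case where no such $f$ exists must be disposed of separately), the quotient $\mathcal{A}/f\mathcal{A}$ satisfies $(\star)$ by induction, so by Lemma~\ref{lemma-quotient-star} the ring $\Gamma(\mathcal{A}/f\mathcal{A})$ is finite over $\Gamma(\mathcal{A})$ and finitely generated over $R$; combining this with Lemma~\ref{lemma-adequate}, which bounds the kernel and cokernel of $\Gamma(\mathcal{A})/f\Gamma(\mathcal{A}) \to \Gamma(\mathcal{A}/f\mathcal{A})$ by ``$f$-power'' errors, and with the Krull intersection theorem $\bigcap_n f^n F(\mathcal{A}) = 0$ in the Noetherian domain $F(\mathcal{A})$, one deduces that ascending chains of ideals of $\Gamma(\mathcal{A})$ stabilize. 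Once $\Gamma(\mathcal{A})$ is Noetherian, Lemma~\ref{lemma-universally-submersive} shows that $\text{Spec}(F(\mathcal{A})) \to \text{Spec}(\Gamma(\mathcal{A}))$ is universally submersive, and Theorem~\ref{theorem-finite-type-local} applied to $R \to \Gamma(\mathcal{A}) \to F(\mathcal{A})$ shows that $\Gamma(\mathcal{A})$ is a finite type $R$-algebra. Finally, for a finite type $\mathcal{A}$-module $\mathcal{F}$ one obtains finiteness of $\Gamma(\mathcal{F})$ over the now-Noetherian ring $\Gamma(\mathcal{A})$ by a parallel argument; indeed the module half of $(\star)$ is propagated through the d\'evissage by Lemmas~\ref{lemma-sandwich} and~\ref{lemma-nilpotent} in tandem with the ring half.

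I expect the main obstacle to be precisely the passage, in the base case, from $(\star)$ for the proper quotient $\mathcal{A}/f\mathcal{A}$ back to finite generation and Noetherianity of $\Gamma(\mathcal{A})$ itself: since we only assume axiom (A) (and not exactness of $\Gamma$), the comparison map $\Gamma(\mathcal{A})/f\Gamma(\mathcal{A}) \to \Gamma(\mathcal{A}/f\mathcal{A})$ is only an isomorphism ``up to $f$-powers'', and converting this into honest finiteness statements is what forces the use of the universally subtrusive/submersive geometry of $\text{Spec}(F(\mathcal{A})) \to \text{Spec}(\Gamma(\mathcal{A}))$ via Lemma~\ref{lemma-universally-submersive} and Theorem~\ref{theorem-finite-type-local}. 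A secondary nuisance is the bookkeeping: one must carry both clauses of $(\star)$ — finite generation of $\Gamma(\mathcal{A})$ and finiteness of $\Gamma(\mathcal{F})$ — simultaneously through the induction, since the d\'evissage lemmas treat $(\star)$ as an indivisible package.
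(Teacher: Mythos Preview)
Your plan is essentially the paper's: Noetherian induction on left ideals, with Lemma~\ref{lemma-sandwich} (and its nilpotent corollary Lemma~\ref{lemma-nilpotent}) powering the d\'evissage, and then Lemma~\ref{lemma-universally-submersive} plus Theorem~\ref{theorem-finite-type-local} to conclude finite generation once $\Gamma(\mathcal{A})$ is known to be Noetherian. Two points, however, are simpler than your sketch suggests.

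For the reduction, the paper stops as soon as every nonzero $f \in \Gamma(\mathcal{A})$ is a nonzerodivisor on $\mathcal{A}$; this follows in one stroke from Lemma~\ref{lemma-sandwich} with $\mathcal{I} = f\mathcal{A}$ and $\mathcal{A}' = \mathcal{A}/\text{Ker}(f : \mathcal{A} \to \mathcal{A})$. Your further reduction to $F(\mathcal{A})$ a domain is not needed. For Noetherianity, your route through Lemma~\ref{lemma-adequate} and the Krull intersection theorem is both unnecessary and not obviously complete as sketched. Once $f$ is a nonzerodivisor on $\mathcal{A}$, left exactness of $\Gamma$ already yields an \emph{injection} $\Gamma(\mathcal{A})/f\Gamma(\mathcal{A}) \hookrightarrow \Gamma(\mathcal{A}/f\mathcal{A})$, so there are no ``$f$-power errors'' to control; Lemma~\ref{lemma-quotient-star} then identifies $\Gamma(\mathcal{A})/f\Gamma(\mathcal{A})$ with the image of a finite-type $R$-subalgebra $B \subset \Gamma(\mathcal{A})$, hence it is Noetherian. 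Any nonzero ideal $I \subset \Gamma(\mathcal{A})$ contains some nonzero $f$, and $I/(f)$ is finitely generated in the Noetherian ring $\Gamma(\mathcal{A})/(f)$, so $I$ is finitely generated---no $f$-adic approximation is required, and your ``degenerate case'' dissolves.

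Finally, the module half of $(\star)$ is not handled in parallel via the d\'evissage lemmas alone: the paper runs a \emph{second} Noetherian induction, now on $\mathcal{A}$-submodules of $\mathcal{F}$, reducing to a minimal $\mathcal{F}$ and then using a nonzero section $s \in \Gamma(\mathcal{F})$ to sandwich $\Gamma(\mathcal{F})$ between $\Gamma(\mathcal{A}\cdot s) \cong \Gamma(\mathcal{A}/\mathcal{I})$ and $\Gamma(\mathcal{F}/\mathcal{A}\cdot s)$, both finite over the already-Noetherian $\Gamma(\mathcal{A})$.
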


\begin{proof}
Let $\mathcal{A}$ be a finite type, locally finite,
weakly commutative ring object $\mathcal{A}$ of $\mathcal{C}$.
For every left ideal $\mathcal{I} \subset \mathcal{A}$ the
quotient $\mathcal{A}/\mathcal{I}$ is also a finite type, locally finite,
weakly commutative ring object of $\mathcal{C}$. Consider the set
$$
\{\mathcal{I} \subset \mathcal{A} \mid 
\text{($\star$) fails for }\mathcal{A}/\mathcal{I} \}.
$$
To get a contradiction assume that this set is nonempty. By Noetherian
induction on the ideal $F(\mathcal{I}) \subset F(\mathcal{A})$ we
see there exists a maximal left ideal $\mathcal{I}_{max} \subset \mathcal{A}$
such that ($\star$) holds for any ideal strictly containing
$\mathcal{I}_{max}$ but ($\star$) does not hold for $\mathcal{I}_{max}$.
Replacing $\mathcal{A}$ by $\mathcal{A}/\mathcal{I}_{max}$
we may assume (in order to get a contradiction)
that ($\star$) does not hold for $\mathcal{A}$ but does
hold for every proper quotient of $\mathcal{A}$.

\medskip\noindent
Let $f \in \Gamma(\mathcal{A})$ be nonzero. If
$\text{Ker}(f : \mathcal{A} \to \mathcal{A})$ is nonzero, then we see that
we get an exact sequence
$$
0 \to (f) \to \mathcal{A} \to \mathcal{A}/(f) \to 0
$$
Since we are assuming ($\star$) holds for both 
$\mathcal{A} / \text{Ker}(f: \mathcal{A} \rightarrow \mathcal{A})$ and $\mathcal{A} / (f)$ 
and since $\text{Ker}(f)$ is a finite $\mathcal{A}/(f)$-module, 
we can apply Lemma \ref{lemma-sandwich}. Hence we see that we may assume
that any nonzero element $f \in \Gamma(\mathcal{A})$ is a nonzero divisor
on $\mathcal{A}$. In particular, $\Gamma(\mathcal{A})$ is a domain.

\medskip\noindent
Again, assume that $f \in \Gamma(\mathcal{A})$ is nonzero.
Consider the sequence
$$
0 \to \mathcal{A} \xrightarrow{f} \mathcal{A} \to
\mathcal{A}/f\mathcal{A} \to 0
$$
which gives rise to the sequence
$$
0 \to \Gamma(\mathcal{A}) \xrightarrow{f} \Gamma(\mathcal{A}) \to
\text{Im}(\Gamma(\mathcal{A}) \to \Gamma(\mathcal{A}/f\mathcal{A})) \to 0
$$
We know that the ring on the right is a finite type $R$-algebra
which is finite over $\Gamma(\mathcal{A})$, see
Lemma \ref{lemma-quotient-star}. Hence any ideal
$I \subset \Gamma(\mathcal{A})$ containing $f$ maps to a finitely generated
ideal in it. This implies that $\Gamma(\mathcal{A})$ is Noetherian.

\medskip\noindent
Next, we claim that for any finite type $\mathcal{A}$-module
$\mathcal{F}$ the module $\Gamma(\mathcal{F})$ is a finite
$\Gamma(\mathcal{A})$-module. Again we can do this by Noetherian
induction applied to the set
$$
\{\mathcal{G} \subset \mathcal{F} \text{ is an }
\mathcal{A}
\text{-submodule such that finite generation fails for }
\Gamma(\mathcal{F}/\mathcal{G}) \}.
$$
In other words, we may assume that $\mathcal{F}$ is a minimal counter example
in the sense that any proper quotient of $\mathcal{F}$ gives a finite
$\Gamma(\mathcal{A})$-module. 
Pick $s \in \Gamma(\mathcal{F})$ nonzero (if $\Gamma(\mathcal{F})$ is zero, we're done).
Let $\mathcal{A}\cdot s \subset \mathcal{F}$ denote the image of
$\mathcal{A} \to \mathcal{F}$ which is multiplying against $s$.
Now we have
$$
0 \to \mathcal{A}\cdot s \to
\mathcal{F} \to
\mathcal{F}/\mathcal{A} \cdot s \to 0
$$
which gives the exact sequence
$$
0 \to \Gamma(\mathcal{A}\cdot s) \to
\Gamma(\mathcal{F}) \to
\Gamma(\mathcal{F}/\mathcal{A} \cdot s)
$$
By minimality we see that the module on the right is finite over
the Noetherian ring $\Gamma(\mathcal{A})$. On the other hand, the module
on the left is $\Gamma(\mathcal{A}/\mathcal{I})$ for the ideal
$\mathcal{I} = \text{Ker}(s: \mathcal{A} \rightarrow \mathcal{F})$. 
If $\mathcal{I} = 0$ then this is
$\Gamma(\mathcal{A})$ and therefore finite, and if $\mathcal{I} \not = 0$ then
this is a finite $\Gamma(\mathcal{A})$-module by
Lemma \ref{lemma-quotient-star} and minimality of $\mathcal{A}$.
Hence we conclude that the middle module is finite over the
Noetherian ring $\Gamma(\mathcal{A})$ which is the desired contradiction.

\medskip\noindent
Finally, we show that $\Gamma(\mathcal{A})$ is of finite type over $R$
which will finish the proof. Namely, by
Lemma \ref{lemma-universally-submersive} the morphism of
schemes
$$
\text{Spec}(F(\mathcal{A})) \longrightarrow \text{Spec}(\Gamma(\mathcal{A}))
$$
is universally submersive. We have already seen that $\Gamma(\mathcal{A})$
is a Noetherian ring. Thus Theorem \ref{theorem-finite-type-local} kicks in and
we are done.
\end{proof}

\begin{remark}  We note that the proof of Theorem \ref{theorem-finite-type} 
can be simplified if the axiom (G) is also satisfied.  In fact, if axiom (G) holds 
in addition to the conditions (1) - (3) of Theorem \ref{theorem-finite-type}, 
then for every finite type, weakly commutative (but not necessarily locally finite) 
ring object $\mathcal{A}$, property ($\star$) holds.  Lemma \ref{lemma-adjoint-new} 
implies that for any ideal $I \subseteq \Gamma(\mathcal{A})$, 
$I = I F(\mathcal{A}) \cap \Gamma(\mathcal{A})$;   
therefore $\Gamma(\mathcal{A})$ is Noetherian.  We can then apply 
 Theorem \ref{theorem-finite-type-local} to conclude that 
 $\Gamma(\mathcal{A})$ is a finite type $R$-algebra.  Furthermore, a simple 
 noetherian induction argument shows that for every finite type module 
 $\mathcal{F}$ over $\mathcal{A}$ the $\Gamma(\mathcal{A})$-module 
 $\Gamma(\mathcal{F})$ is finite type.
\end{remark}

\section{Quasi-coherent sheaves on algebraic stacks}
\label{section-quasi-coherent}

\noindent
Let $S = \text{Spec}(R)$ be an affine scheme.
Let $\mathcal{X}$ be a quasi-compact algebraic stack over $S$.
Let $p : T \to \mathcal{X}$ be a smooth surjective morphism from an
affine scheme $T = \text{Spec}(A)$.

\begin{lemma}
\label{lemma-quasi-coherent}
In the situation above, the category $\text{QCoh}(\mathcal{O}_{\mathcal{X}})$
endowed with its natural tensor product, pullback functor
$F : \text{QCoh}(\mathcal{O}_{\mathcal{X}}) \to
\text{QCoh}(\mathcal{O}_T) = \text{Mod}_A$
and structure sheaf $\mathcal{O} = \mathcal{O}_{\mathcal{X}}$
is an example of Situation \ref{situation-category}.  
The functor $\Gamma : \text{QCoh}(\mathcal{O}_{\mathcal{X}}) \to \text{Mod}_R$
is identified with the functor of global sections
$$ \mathcal{F} \longmapsto \Gamma(\mathcal{X}, \mathcal{F}).$$
Axioms (D), (C), and (S) hold.  If $\mathcal{X}$ is noetherian 
(eg. $\mathcal{X}$ is quasi-separated and $A$ is Noetherian), then axiom (L) holds.
\end{lemma}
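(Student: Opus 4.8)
The plan is to unwind the definitions to recognise $\mathcal{C} = \text{QCoh}(\mathcal{O}_{\mathcal{X}})$, with the tensor product, the pullback functor $F = p^{*}$ and the structure sheaf $\mathcal{O} = \mathcal{O}_{\mathcal{X}}$, as an instance of Situation~\ref{situation-category}, and then to check axioms (D), (C), (S), (L); essentially everything reduces to standard facts about quasi-coherent sheaves, the one point worth attention being the behaviour of $\Gamma$ and $F$ under colimits. First I would record: $\text{QCoh}(\mathcal{O}_{\mathcal{X}})$ is an abelian category, and it is $R$-linear via the structure morphism $\mathcal{X}\to S = \text{Spec}(R)$; the bifunctor $\otimes = \otimes_{\mathcal{O}_{\mathcal{X}}}$ is $R$-bilinear; and $F = p^{*}$ is $R$-linear. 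The functor $F$ is exact because $p$ is smooth, hence flat, and faithful because $p$ is surjective --- concretely, $\text{QCoh}(\mathcal{O}_{\mathcal{X}})$ is equivalent to the category of quasi-coherent $\mathcal{O}_{T}$-modules equipped with a descent datum along the groupoid $(T, T\times_{\mathcal{X}} T)$, and under this equivalence $F$ is the functor that forgets the descent datum, which is visibly faithful. The isomorphism $\gamma$ is the canonical $p^{*}\mathcal{F}\otimes_{\mathcal{O}_{T}} p^{*}\mathcal{G} \xrightarrow{\ \sim\ } p^{*}(\mathcal{F}\otimes_{\mathcal{O}_{\mathcal{X}}}\mathcal{G})$, and the associativity constraint $\tau$, the commutativity constraint $\sigma$ (which gives axiom (C)) and the unit isomorphisms $\mu$ --- with $F(\mathcal{O}_{\mathcal{X}}) = p^{*}\mathcal{O}_{\mathcal{X}} = \mathcal{O}_{T} = A$ --- are the standard ones from the symmetric monoidal structure on quasi-coherent sheaves, compatible with $\gamma$ since $p^{*}$ is symmetric monoidal. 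Finally, $\Gamma(\mathcal{F}) = \text{Mor}_{\text{QCoh}(\mathcal{O}_{\mathcal{X}})}(\mathcal{O}_{\mathcal{X}}, \mathcal{F}) = \text{Hom}_{\mathcal{O}_{\mathcal{X}}}(\mathcal{O}_{\mathcal{X}}, \mathcal{F}) = \Gamma(\mathcal{X}, \mathcal{F})$, as asserted.

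For axiom (S), given (C) one may simply cite Lemma~\ref{lemma-symmetric}; directly, $\text{Sym}^{n}_{\mathcal{O}_{\mathcal{X}}}(\mathcal{F})$ --- the quotient of $\mathcal{F}^{\otimes n}$ by the $S_{n}$-action --- works, since symmetric powers commute with pullback, so that $F(\text{Sym}^{n}_{\mathcal{O}_{\mathcal{X}}}(\mathcal{F})) = \text{Sym}^{n}_{A}(F(\mathcal{F}))$ with $F$ of the quotient map equal to the natural surjection, and the universal property transfers because $F$ is faithful. For axiom (D): $\text{QCoh}(\mathcal{O}_{\mathcal{X}})$ has arbitrary direct sums, and $\otimes_{\mathcal{O}_{\mathcal{X}}}$ and $F = p^{*}$ commute with them, being left adjoints. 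To see that $\Gamma = \Gamma(\mathcal{X}, -)$ also commutes with direct sums, I would use fppf descent along $p$: since quasi-coherent sheaves satisfy fppf descent and $p$ is a smooth (hence fppf) cover, there is an exact sequence
$$ 0 \longrightarrow \Gamma(\mathcal{X}, \mathcal{F}) \longrightarrow \Gamma(T, p^{*}\mathcal{F}) \longrightarrow \Gamma\bigl(T\times_{\mathcal{X}} T,\ \mathcal{G}_{\mathcal{F}}\bigr), $$
natural in $\mathcal{F}$, where the last arrow is the difference of the two maps induced by the projections and $\mathcal{G}_{\mathcal{F}}$ is the pullback of $p^{*}\mathcal{F}$ to $T\times_{\mathcal{X}} T$. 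Applying this to $\bigoplus_{\alpha}\mathcal{F}_{\alpha}$ and using that $p^{*}$ and the further pullbacks commute with direct sums, that $\Gamma(T, -)$ commutes with direct sums ($T$ is affine), that the natural map $\bigoplus_{\alpha}\Gamma(T\times_{\mathcal{X}} T, \mathcal{G}_{\mathcal{F}_{\alpha}})\to \Gamma(T\times_{\mathcal{X}} T, \bigoplus_{\alpha}\mathcal{G}_{\mathcal{F}_{\alpha}})$ is injective (each summand is detected by the corresponding projection), and that kernels commute with direct sums in $\text{Mod}_{R}$, one obtains $\Gamma(\mathcal{X}, \bigoplus_{\alpha}\mathcal{F}_{\alpha}) = \bigoplus_{\alpha}\Gamma(\mathcal{X}, \mathcal{F}_{\alpha})$; note that no quasi-compactness of $T\times_{\mathcal{X}} T$ is needed for this.

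For axiom (L), assume $\mathcal{X}$ is noetherian. I would invoke the approximation statement --- the stacky analogue of the scheme and algebraic space cases --- that every quasi-coherent sheaf $\mathcal{F}$ on a noetherian algebraic stack is the filtered union of its coherent subsheaves $\mathcal{F}_{i}\subset\mathcal{F}$. Each such $\mathcal{F}_{i}$ is a finite type object in the sense of the paper, since $p^{*}\mathcal{F}_{i}$ is coherent, hence a finitely generated $A$-module; and $F = p^{*}$, being a left adjoint, commutes with this (indeed any) filtered colimit, so $F(\mathcal{F}) = \text{colim}\, F(\mathcal{F}_{i})$ holds automatically. Hence every object of $\text{QCoh}(\mathcal{O}_{\mathcal{X}})$ is locally finite, i.e. (L) holds. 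The parenthetical special case is subsumed: if $\mathcal{X}$ is quasi-separated and $A$ is Noetherian, then $\mathcal{X}$ is quasi-compact (given), quasi-separated, and locally noetherian (through the presentation $T = \text{Spec}(A)$), hence noetherian.

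As for the main obstacle: almost all of the above is transporting the symmetric monoidal bookkeeping of $\text{QCoh}(\mathcal{O}_{\mathcal{X}})$ through $p^{*}$ and is routine. The two steps that are not pure formalism are the faithfulness of $F = p^{*}$ (handled by fppf descent along the smooth cover $p$) and, the more delicate one, the assertion in axiom (D) that $\Gamma(\mathcal{X}, -)$ commutes with arbitrary direct sums --- for which the descent/equalizer description of $\Gamma$ above, combined with the elementary injectivity of $\bigoplus\Gamma(-)\to\Gamma(\bigoplus -)$ over any algebraic space, is the crux. For axiom (L) the substantive input is the cited approximation of quasi-coherent sheaves by coherent subsheaves on a noetherian algebraic stack; I would either cite this or reprove it through the smooth presentation together with Noetherianity of $A$.
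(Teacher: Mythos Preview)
Your proposal is correct and follows the same route as the paper, which simply declares ``the rest is clear'' and cites \cite[Prop 15.4]{lmb} for axiom (L); you have merely spelled out what ``clear'' entails, including a careful argument via the descent equalizer for why $\Gamma(\mathcal{X},-)$ commutes with direct sums. Your citation of the approximation-by-coherent-subsheaves result for (L) is exactly the content of the reference the paper invokes.
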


\begin{proof}
The final statement is \cite[Prop 15.4]{lmb}.  The rest is clear.
\end{proof}

\noindent
The following definition reinterprets the adequacy axiom (A).

\begin{definition}
\label{definition-adequate-moduli-space}
Let $\mathcal{X}$ be an quasi-compact algebraic stack over $S = \text{Spec}(R)$.  
We say that $\mathcal{X}$ is \emph{adequate} if for every surjection 
$\mathcal{A} \to \mathcal{B}$ of quasi-coherent 
$\mathcal{O}_{\mathcal{X}}$-algebras with $\mathcal{A}$ locally finite and 
$f \in \Gamma(\mathcal{X}, \mathcal{B})$, there
exists an $n > 0$ and a $g \in \Gamma(\mathcal{X}, \mathcal{A})$
such that $g \mapsto f^n$ in $\Gamma(\mathcal{X}, \mathcal{B})$.
\end{definition}

\begin{lemma}
\label{lemma-adequate-stack-equivalences}
Let $\mathcal{X}$ be an quasi-compact algebraic stack over $S = \text{Spec}(R)$. 
The following are equivalent:
\begin{enumerate}
\item $\mathcal{X}$ is adequate.
\item For every surjection of finite type $\mathcal{O}_{\mathcal{X}}$-modules
$\mathcal{G} \to \mathcal{F}$ and $f \in \Gamma(\mathcal{X}, \mathcal{F})$, there
exists an $n > 0$ and a $g \in \Gamma(\mathcal{X}, \text{Sym}^n \mathcal{G})$
such that $g \mapsto f^n$ in $\Gamma(\mathcal{X}, \text{Sym}^n \mathcal{F})$.
\end{enumerate}
If $\mathcal{X}$ is noetherian, then the above are also equivalent to:
\begin{enumerate} \setcounter{enumi}{2}
\item 
For every surjection
$\mathcal{G} \to \mathcal{O}$ with $\mathcal{G}$ of finite type and 
$f \in \Gamma(\mathcal{X}, \mathcal{O}_{\mathcal{X}})$, there
exists an $n > 0$ and a $g \in \Gamma(\mathcal{X}, \text{Sym}^n \mathcal{G})$
such that $g \mapsto f^n$ in $\Gamma(\mathcal{X}, \mathcal{O}_{\mathcal{X}})$.
\item[(1')] 
For every surjection $\mathcal{A} \to \mathcal{B}$ of quasi-coherent 
$\mathcal{O}_{\mathcal{X}}$-algebras and 
$f \in \Gamma(\mathcal{X}, \mathcal{B})$, there
exists an $n > 0$ and a $g \in \Gamma(\mathcal{X}, \mathcal{A})$
such that $g \mapsto f^n$ in $\Gamma(\mathcal{X}, \mathcal{B})$.
\item[(2')]
 For every surjection of $\mathcal{O}_{\mathcal{X}}$-modules
$\mathcal{G} \to \mathcal{F}$ and $f \in \Gamma(\mathcal{X}, \mathcal{F})$, there
exists an $n > 0$ and a $g \in \Gamma(\mathcal{X}, \text{Sym}^n \mathcal{G})$
such that $g \mapsto f^n$ in $\Gamma(\mathcal{X}, \text{Sym}^n \mathcal{F})$.
\item[(3')]
For every surjection
$\mathcal{G} \to \mathcal{O}$  and 
$f \in \Gamma(\mathcal{X}, \mathcal{O}_{\mathcal{X}})$, there
exists an $n > 0$ and a $g \in \Gamma(\mathcal{X}, \text{Sym}^n \mathcal{G})$
such that $g \mapsto f^n$ in $\Gamma(\mathcal{X}, \mathcal{O}_{\mathcal{X}})$.
\end{enumerate}
\end{lemma}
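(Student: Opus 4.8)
The plan is to deduce every implication from Lemma~\ref{lemma-adequate-equivalences}, applied to the set of data $(R \to A, \text{QCoh}(\mathcal{O}_{\mathcal{X}}), \otimes, F, \gamma, \mathcal{O}_{\mathcal{X}}, \mu)$ provided by Lemma~\ref{lemma-quasi-coherent}, once we have matched up the numbered conditions on the two sides.

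First I would set up the dictionary. In $\mathcal{C} = \text{QCoh}(\mathcal{O}_{\mathcal{X}})$ a ring object is a quasi-coherent $\mathcal{O}_{\mathcal{X}}$-algebra, and such an algebra is weakly commutative (i.e.\ $F(\mathcal{A}) = p^{*}\mathcal{A}$ is commutative) if and only if it is commutative, since commutativity of a quasi-coherent algebra may be checked after the faithfully flat base change $p : T \to \mathcal{X}$. By Lemma~\ref{lemma-quasi-coherent} the functor $\Gamma$ of Definition~\ref{definition-global-sections} is $\mathcal{F} \mapsto \Gamma(\mathcal{X}, \mathcal{F})$; a surjection in $\mathcal{C}$ is a surjection of quasi-coherent sheaves; the finite type (resp.\ locally finite) objects are the finite type (resp.\ filtered-colimit-of-finite-type) quasi-coherent sheaves; and, because $p^{*}$ commutes with the formation of symmetric powers, the universal quotient $\text{Sym}^{n}_{\mathcal{C}}(\mathcal{F})$ of axiom~(S) is the ordinary symmetric power $\text{Sym}^{n}\mathcal{F}$. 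Under this dictionary condition~(1) of the present lemma (adequacy, Definition~\ref{definition-adequate-moduli-space}) is condition~(3) of Lemma~\ref{lemma-adequate-equivalences}, condition~(2) here is condition~(1) there, condition~(3) here is condition~(2) there, and the primed conditions~(1'), (2'), (3') here correspond to (3'), (1'), (2') there.

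Next I would apply Lemma~\ref{lemma-quasi-coherent}: axioms (D), (C), (S) hold unconditionally, so Lemma~\ref{lemma-adequate-equivalences} is in force. From that lemma, $(1)\Rightarrow(3)$ there yields $(2)\Rightarrow(1)$ here, and $(3)\Rightarrow(1)$ there (which uses (D)) yields $(1)\Rightarrow(2)$ here; this proves the unconditional equivalence $(1)\Leftrightarrow(2)$. For the noetherian case I would observe that if $\mathcal{X}$ is noetherian then the affine scheme $T = \text{Spec}(A)$ is locally Noetherian, hence $A$ is Noetherian, so axiom~(N) holds, and axiom~(L) holds by Lemma~\ref{lemma-quasi-coherent}. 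Then $(2)\Rightarrow(1)$ of Lemma~\ref{lemma-adequate-equivalences}, valid under (N), translates to $(3)\Rightarrow(2)$ here, giving $(1)\Leftrightarrow(2)\Leftrightarrow(3)$; and the equivalences $(1)\Leftrightarrow(1')$, $(2)\Leftrightarrow(2')$, $(3)\Leftrightarrow(3')$ of that lemma, valid under (L), become $(2)\Leftrightarrow(2')$, $(3)\Leftrightarrow(3')$, $(1)\Leftrightarrow(1')$ here, which exhausts the list.

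The only content beyond this bookkeeping is the dictionary itself --- in particular the descent of commutativity along $p$, the identification of $\text{Sym}^{n}_{\mathcal{C}}$ with the usual symmetric power, and the remark that in the noetherian case ``locally finite'' is vacuous --- together with the elementary fact that a locally Noetherian affine scheme is Noetherian. I do not anticipate a genuine obstacle; the one thing to be careful about is keeping the cross-referenced numbering between the two lemmas straight.
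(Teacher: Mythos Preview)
Your proposal is correct and is exactly the paper's approach: the paper's proof is the single line ``This is Lemma~\ref{lemma-adequate-equivalences},'' and you have simply spelled out the dictionary and the cross-referencing of conditions that makes that citation work.
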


\begin{proof}
This is Lemma \ref{lemma-adequate-equivalences}.
\end{proof}

\begin{corollary}
\label{corollary-stack-finite-type}
Let $\mathcal{X}$ be an algebraic stack finite type over an affine noetherian 
scheme $\text{Spec}(R)$. 
Suppose $\mathcal{X}$ is adequate.  
Let $\mathcal{A}$ be a finite type $\mathcal{O}_{\mathcal{X}}$-algebra.  
Then $\Gamma(\mathcal{X}, \mathcal{A})$ is finitely generated over $R$ 
and for every finite type $\mathcal{A}$-module $\mathcal{\mathcal{F}}$, 
the $\Gamma(\mathcal{X}, \mathcal{A})$-module 
$\Gamma(\mathcal{X}, \mathcal{F})$ is finite.
\end{corollary}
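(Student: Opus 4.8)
The plan is to obtain this as an immediate application of Theorem \ref{theorem-finite-type} to the categorical setup furnished by Lemma \ref{lemma-quasi-coherent}; the corollary contains no genuinely new content, so the work is entirely bookkeeping.

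First I would fix the data. Using the smooth surjection $p : T = \text{Spec}(A) \to \mathcal{X}$ from the paragraph preceding Lemma \ref{lemma-quasi-coherent}: since $\mathcal{X}$ is of finite type over $\text{Spec}(R)$ and $p$ is smooth, $T$ is of finite type over $\text{Spec}(R)$, so $A$ is a finitely generated $R$-algebra, hence Noetherian. Thus $R$ is Noetherian, $R \to A$ is of finite type, and axiom (N) holds. By Lemma \ref{lemma-quasi-coherent} the tuple $(R \to A, \text{QCoh}(\mathcal{O}_{\mathcal{X}}), \otimes, F, \gamma, \mathcal{O}_{\mathcal{X}}, \mu)$ is an instance of Situation \ref{situation-category}, axioms (D), (C), (S) hold, and—since $\mathcal{X}$ is Noetherian—axiom (L) holds as well; here I would note that $\mathcal{X}$ is Noetherian because it is quasi-compact, quasi-separated, and locally of finite type over the Noetherian scheme $\text{Spec}(R)$. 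Finally, by Definition \ref{definition-adequate-moduli-space}, adequacy of $\mathcal{X}$ is precisely axiom (A) for this setup.

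Next I would check that $\mathcal{A}$ satisfies the hypotheses of Theorem \ref{theorem-finite-type}. A quasi-coherent $\mathcal{O}_{\mathcal{X}}$-algebra is a ring object of $\text{QCoh}(\mathcal{O}_{\mathcal{X}})$ in the sense of Definition \ref{definition-ring}, and $F(\mathcal{A}) = p^\ast\mathcal{A}$ is a commutative ring, so $\mathcal{A}$ is weakly commutative. Since $\mathcal{A}$ is a finite type $\mathcal{O}_{\mathcal{X}}$-algebra, $p^\ast\mathcal{A}$ is a finitely generated $A$-algebra, i.e. $\mathcal{A}$ is of finite type in the sense of Definition \ref{definition-properties}; and it is locally finite by axiom (L). Therefore Theorem \ref{theorem-finite-type} applies and property $(\star)$ holds for $\mathcal{A}$: the ring $\Gamma(\mathcal{A})$ is a finite type $R$-algebra, and for every finite type $\mathcal{A}$-module $\mathcal{F}$ the $\Gamma(\mathcal{A})$-module $\Gamma(\mathcal{F})$ is finite.

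It then remains only to unravel notation: by Lemma \ref{lemma-quasi-coherent} the functor $\Gamma$ on $\text{QCoh}(\mathcal{O}_{\mathcal{X}})$ is $\mathcal{F} \mapsto \Gamma(\mathcal{X}, \mathcal{F})$, and a finite type module object over $\mathcal{A}$ is exactly a finite type $\mathcal{A}$-module in the usual sense (its pullback along $p$ is finite over $p^\ast\mathcal{A}$). Substituting these identifications into property $(\star)$ yields the statement of the corollary verbatim. The only point requiring a moment's care is the verification that $\mathcal{X}$ is Noetherian, since that is what makes axiom (L)—and hence the local finiteness of $\mathcal{A}$—available; with that in hand there is no obstacle, everything else being a direct translation.
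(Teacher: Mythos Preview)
Your proposal is correct and takes exactly the same approach as the paper, which simply writes ``This is Theorem \ref{theorem-finite-type}.'' You have merely spelled out the hypothesis-checking that the paper leaves implicit.
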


\begin{proof}
This is Theorem \ref{theorem-finite-type}.
\end{proof}

\section{Bialgebras, modules and comodules}
\label{section-hopf}

\noindent
In this section we discuss how modules and comodules over a bialgebra
form an example of our abstract setup.  If $A$ is a commutative ring, 
recall that a \emph{bialgebra $H$ over $A$} is an
$A$-module $H$ endowed with maps
$(A \to H, H \otimes_A H \to A, \epsilon : H \to A,
\delta : H \to H \otimes_A H)$. Here $H \otimes_A H \to H$ and
$A \to H$ define an unital $A$-algebra structure on $H$,
the maps $\delta$ and $\epsilon$ are unital $A$-algebra maps.
Moreover, the comultiplication $\mu$ is associative and
$\epsilon$ is a counit.

\medskip \noindent
Let $H$ be a bialgebra over $A$.  A \emph{left $H$-module} 
is a left module over the $R$-algebra structure on $H$; that is,
 there is a $A$-module homomorphism $H \otimes_A M \to M$ 
satisfying the two commutative diagrams for an action.  A \emph{left $H$-comodule} 
$M$ is an $R$-module homomorphism $\sigma: M \to H \otimes_A M$ satisfying 
the two commutative diagram for a coaction.  See \cite[Chapter 3]{kassel} and 
\cite[Chapter 1]{montgomery} for the basic properties of $H$-modules 
and $H$-comodules.

\begin{definition}
\label{definition-bialgebra}
Let $A$ be a commutative ring.
Let $H$ be a bialgebra over $A$.
\begin{enumerate}
\item Let $\text{Mod}_H$ be the category
of left $H$-modules. It is endowed with the forgetful functor to $A$-modules,
the tensor product
$$
(M, N) \longmapsto M \otimes_A N
$$
where $H$ acts on $M \otimes_A N$ via the comultiplication,
and the object $\mathcal{O}$ given by the
module $A$ where $H$ acts via the counit.
\item Let $\text{Comod}_H$ be the category of left $H$-comodules.
It is endowed with the forgetful functor to $A$-modules, the tensor product
$$
(M, N) \longmapsto M \otimes_A N
$$
where comodule structure on $M \otimes_A N$ comes from the multiplication
in $H$, and the object $\mathcal{O}$ given by the
module $A$ where $H$ acts via the $A$-algebra structure $H$.
\end{enumerate}
\end{definition}

\begin{lemma}
\label{lemma-bialgebra}
Let $R \to A$ be a map of commutative rings.
Let $H$ be a bialgebra over $A$.
\begin{enumerate}
\item The category $\text{Mod}_H$ with its additional structure introduced in
Definition \ref{definition-bialgebra} is an example of
Situation \ref{situation-category}.  
The functor $\Gamma : \text{Mod}_H \to \text{Mod}_R$
is identified with the functor of invariants
$$
M \longmapsto M^H = \{m \in M \mid h \cdot m = \epsilon(h) m \}.
$$
Axioms (D), (I) and (S) hold.  
Axiom (C) holds if H is cocommutative.
\item The category $\text{Comod}_H$ with its additional structure introduced in
Definition \ref{definition-bialgebra} is an example of
Situation \ref{situation-category}.  
The functor $\Gamma : \text{Comod}_H \to \text{Mod}_R$ is
identified with the functor of coinvariants
$$
M \longmapsto M_H = \{m \in M \mid \sigma(m) = 1 \otimes m\}
$$
where $\sigma : M \to H \otimes_A M$ indicates the coaction of $M$. 
Axiom (D) holds.
Axiom (C) holds if $H$ is commutative.
\end{enumerate}
\end{lemma}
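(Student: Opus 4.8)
The plan is to verify each clause directly, handling $\text{Mod}_H$ and $\text{Comod}_H$ in parallel; everything is bookkeeping with the (co)algebra structure maps and with the data required in Situation \ref{situation-category}, so I only indicate the key points.

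\textbf{The category $\text{Mod}_H$.} This is the category of left modules over the (possibly noncommutative) $A$-algebra $H$, hence it is abelian, is $R$-linear via $R \to A \to H$, has all limits and colimits, and the forgetful functor $F$ to $\text{Mod}_A$ is faithful, exact, $R$-linear, and computes all of these on underlying $A$-modules; in particular $F$ commutes with arbitrary products and direct sums. The bifunctor $M \otimes_A N$ with $H$ acting through the comultiplication $\delta$ is well defined into $\text{Mod}_H$ precisely because $\delta$ is a unital $A$-algebra map; coassociativity of $\delta$ supplies $\tau$, and $\gamma_{M,N} = \text{id}_{M \otimes_A N}$. The unit object is $\mathcal{O} = A$ with $H$ acting through $\epsilon$, the counit identities making $\mathcal{O} \otimes M \cong M \cong M \otimes \mathcal{O}$ the usual isomorphisms under $F$. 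An $H$-module map $f : A \to M$ is determined by $m = f(1)$, and $H$-linearity of $f$ is exactly the relation $h \cdot m = \epsilon(h) m$, so $\Gamma(M) = \text{Hom}_H(A, M) = M^H$. Axiom (D): the $H$-action on $\bigoplus_i M_i$ is componentwise, so $\otimes_A$, $F$, and $(-)^H$ all commute with direct sums. Axiom (I): the diagonal action makes $\prod_i M_i$ the product in $\text{Mod}_H$ and $F$ commutes with it; then (S) follows from Lemma \ref{lemma-symmetric}. Axiom (C): the $A$-linear flip $M \otimes_A N \to N \otimes_A M$ is $H$-linear iff $\sum h_{(1)} \otimes h_{(2)} = \sum h_{(2)} \otimes h_{(1)}$ for all $h$, that is, iff $H$ is cocommutative, and when it is, uniqueness of $\sigma$ (noted after Definition \ref{definition-commutative}) identifies $\sigma$ with the flip.

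\textbf{The category $\text{Comod}_H$.} Here I would invoke the basic theory of comodules (\cite[Chapter 3]{kassel}, \cite[Chapter 1]{montgomery}): $\text{Comod}_H$ is an abelian $R$-linear category with forgetful functor $F$ to $\text{Mod}_A$ that is faithful, exact, $R$-linear, and commutes with direct sums, with cokernels, kernels and direct sums computed on underlying $A$-modules, using that $H \otimes_A -$ commutes with colimits. The bifunctor $M \otimes_A N$ carries the coaction assembled from $\sigma_M$, $\sigma_N$ and the multiplication of $H$; it lands in $\text{Comod}_H$ because $H$ is a bialgebra, associativity of the multiplication supplies $\tau$, and $\gamma$ is again the identity. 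The unit $\mathcal{O} = A$ carries the coaction $A \to H \otimes_A A = H$ induced by $A \to H$, so $F(\mathcal{O}) = A$ with the usual unit isomorphisms. A comodule map $f : A \to M$ out of $\mathcal{O}$ is determined by $m = f(1)$, and compatibility with the coactions forces $\sigma_M(m) = 1 \otimes m$, so $\Gamma(M) = M_H$. Axiom (D) holds because direct sums are computed on underlying $A$-modules, $\otimes_A$ and $F$ commute with them, and coinvariance is tested coordinatewise. Axiom (C): the flip $M \otimes_A N \to N \otimes_A M$ respects the coactions iff the two resulting elements of $H$ agree for all inputs, that is, iff $H$ is commutative. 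We do not claim (I) or (S): products in $\text{Comod}_H$ are not the underlying-module products, and (S) genuinely fails for a general bialgebra (cf.\ the discussion after Lemma \ref{lemma-adequate-equivalences}).

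\textbf{Expected obstacle.} Every step above is a direct unwinding of the bialgebra axioms and of Situation \ref{situation-category} except one: that $\text{Comod}_H$ is abelian with $F$ exact. Cokernels and direct sums are immediate, but constructing kernels and checking $\text{Im} = \text{Coim}$ rely on the structure theory of comodules (and implicitly on $H$ being flat enough over $A$) rather than a one-line argument, so that is the point where I would lean on the cited references.
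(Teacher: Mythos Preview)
Your proposal is correct and follows essentially the same approach as the paper: direct verification of the axioms of Situation \ref{situation-category}, invoking Lemma \ref{lemma-symmetric} to deduce (S) from (I) for $\text{Mod}_H$, and appealing to the standard references \cite{kassel}, \cite{montgomery} for the comodule side. Your write-up is considerably more explicit than the paper's (which simply declares most points ``clear''), and your flagged obstacle about $\text{Comod}_H$ being abelian with $F$ exact is a genuine subtlety that the paper glosses over---it is indeed where a flatness-type hypothesis on $H$ over $A$ would normally enter, and deferring to the references is the right move.
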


\begin{proof}
The first two statements in both part (1) and (2) are clear.
It also clear that axiom (D) holds in both cases.
Arbitrary direct products exist in the category $\text{Mod}_H$, 
which is axiom (I), and so by Lemma \ref{lemma-symmetric} axiom (S) holds.  
The final statement concerning axiom (C) is straightforward,
see \cite[Section 1.8]{montgomery}.
\end{proof}

\section{Adequacy for a bialgebra}

\noindent
Let $R \to A$ be map of commutative rings.
Let $H$ be a bialgebra over $A$.
Let $M$ be an $H$-module.
We can identify $\text{Sym}_H^n M := \text{Sym}^n_{\text{Mod}_H} M$ of axiom (S)  
with the $H$-module
$$
 \underbrace{M \otimes_A \cdots \otimes_A M}_{n} / M'
$$ 
where $M'$ is the submodule generated by elements 
$h \cdot ( (\cdots \otimes m_i \otimes \cdots \otimes m_j \otimes \cdots) -  
(\cdots \otimes m_j \otimes \cdots \otimes m_i \otimes \cdots))$ 
for $h \in H$ and $m_1, \ldots, m_n \in M$.
And $\text{Sym}_H M :=\bigoplus_n \text{Sym}^n_{H} M$ is the 
largest $H$-module quotient 
of the tensor algebra on $M$ which is commutative.

\medskip \noindent
An \emph{$H$-algebra} is an $H$-module $C$ which is an algebra 
over the algebra structure on $H$ such that $A \to C$ and 
$C \otimes_A C \to C$ are $H$-module homomorphisms.  We say 
that $C$ is \emph{commutative} if $C$ is commutative as an algebra.  
An $H$-module $M$ is \emph{locally finite} if it is the 
filtered colimit of finite type $H$-modules.

\medskip \noindent
The following definition reinterprets adequacy axiom (A) for the category $\text{Mod}_H$.  

\begin{definition} 
\label{definition-bialgebra-adequate}
Let $R \to A$ be map of commutative rings.
Let $H$ be a bialgebra over $A$.
We say that $H$ is \emph{adequate} if for every surjection of 
commutative $H$-algebras $C \to D$ in $\text{Mod}_H$ with $C$ locally 
finite, and any $f \in D^H$, there exists an $n > 0$ and an element $g \in C^H$ 
such that $g \mapsto f^n$ in $D^H$.
\end{definition}

\begin{lemma}
\label{lemma-adequate-bialgebra-equivalences}
Let $R \to A$ be map of commutative rings.
Let $H$ be a bialgebra over $A$.
The following are equivalent:
\begin{enumerate}
\item $H$ is adequate.
\item For every surjection of finite type $H$-modules
$N \to M$ and $f \in M^H$, there
exists an $n > 0$ and a $g \in (\text{Sym}_H^n N)^H$
such that $g \mapsto f^n$ in $(\text{Sym}_H^n M)^H$.
\end{enumerate}
If $A$ is Noetherian, then the above are also equivalent to:
\begin{enumerate} \setcounter{enumi}{2}
\item 
For every surjection of finite type $H$-modules
$N \to A$ and $f \in A$, there
exists an $n > 0$ and a $g \in (\text{Sym}_H^n N)^H$
such that $g \mapsto f^n$ in $A $.
\end{enumerate}
\end{lemma}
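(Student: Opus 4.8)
The plan is to deduce this lemma from the abstract equivalence Lemma \ref{lemma-adequate-equivalences} applied to $\mathcal{C} = \text{Mod}_H$. By Lemma \ref{lemma-bialgebra} this category is an instance of Situation \ref{situation-category} in which axioms (D), (I) and (S) all hold; in particular the standing hypothesis (S) of Lemma \ref{lemma-adequate-equivalences} is satisfied, and the auxiliary hypothesis (D) used there for some implications is available as well, while if $A$ is Noetherian then axiom (N) holds too.

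First I would record the dictionary between the abstract language and the bialgebra language. Under the forgetful functor $F$, the functor $\Gamma$ of Definition \ref{definition-global-sections} is the invariants functor $M \mapsto M^H$ (Lemma \ref{lemma-bialgebra}); a ring object of $\text{Mod}_H$ is an $H$-algebra, and, since $F$ is faithful, exact and merely forgets the $H$-action, such a ring object is weakly commutative exactly when it is commutative as an $A$-algebra, so that surjections of weakly commutative ring objects in $\text{Mod}_H$ are precisely surjections of commutative $H$-algebras; the internal symmetric power $\text{Sym}^n_{\text{Mod}_H} M$ agrees with the explicit quotient $\text{Sym}_H^n M = M^{\otimes_A n}/M'$ recalled at the start of this section (this is exactly what the universal property in axiom (S) gives, using faithfulness of $F$); the notion of a locally finite ring object coincides with that of a locally finite $H$-algebra, since colimits in $\text{Mod}_H$ are computed on underlying $A$-modules; and the unit object $\mathcal{O}$ is $A$ with $H$ acting through the counit, for which $h \cdot a = \epsilon(h) a$ holds identically, so $\Gamma(\mathcal{O}) = \mathcal{O}^H = A$. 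With these identifications, ``$H$ is adequate'' in the sense of Definition \ref{definition-bialgebra-adequate} is precisely condition (3) of Lemma \ref{lemma-adequate-equivalences}, the present condition (2) is condition (1) there, and, when $A$ is Noetherian, the present condition (3) is condition (2) there.

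Next I would invoke Lemma \ref{lemma-adequate-equivalences}. Since axiom (S) holds it gives $(1) \Rightarrow (2)$ and $(1) \Rightarrow (3)$ there, and since axiom (D) holds for $\text{Mod}_H$ it also gives $(3) \Rightarrow (1)$; combining these shows that the present conditions (1) and (2) are equivalent. If moreover $A$ is Noetherian, then axiom (N) holds and Lemma \ref{lemma-adequate-equivalences} additionally supplies $(2) \Rightarrow (1)$, so that conditions (1), (2) and (3) of that lemma are all equivalent; translating back, the present condition (3) joins (1) and (2).

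There is essentially no new mathematical content here: the argument is bookkeeping layered on Lemmas \ref{lemma-bialgebra} and \ref{lemma-adequate-equivalences}. The only point demanding a little care — the closest thing to an obstacle — is confirming the dictionary, especially that $\text{Sym}^n_{\text{Mod}_H} M$ really is the explicit quotient $M^{\otimes_A n}/M'$ and that weakly commutative ring objects in $\text{Mod}_H$ are exactly the commutative $H$-algebras; both are immediate from the universal property of the symmetric power and the faithfulness of the forgetful functor, as already indicated in the discussion preceding the statement.
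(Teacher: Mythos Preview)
Your proposal is correct and follows exactly the approach of the paper, which simply cites Lemma \ref{lemma-adequate-equivalences}; you have merely spelled out the dictionary and the chain of implications in detail. The identifications you make (adequacy of $H$ with abstract condition (3), the present (2) with abstract (1), and under (N) the present (3) with abstract (2)) are the right ones, and the verification that $\Gamma(\mathcal{O}) = A^H = A$ is exactly what is needed to match the last condition.
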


\begin{proof}  
This is Lemma \ref{lemma-adequate-equivalences}.
\end{proof}

\begin{corollary} \label{corollary-module-finite-type}
Let $R \to A$ be a finite type map of commutative rings where $R$ is Noetherian.  
Let $H$ be an adequate bialgebra over $A$.  
Let $C$ be a finitely generated, locally finite, commutative $H$-algebra. 
Then $C^H$ is a finitely generated $R$-algebra and for 
every finite type $C$-module M, the 
$C^H$-module $M^H$ is finite.
\end{corollary}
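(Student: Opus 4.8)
The plan is to deduce this directly from Theorem \ref{theorem-finite-type}, applied to the categorical setup $\text{Mod}_H$ furnished by Lemma \ref{lemma-bialgebra}(1). First I would record that $(R \to A, \text{Mod}_H, \otimes_A, F, \gamma, \mathcal{O}, \mu)$, with $F$ the forgetful functor to $\text{Mod}_A$, is an instance of Situation \ref{situation-category} for which axiom (D) holds, and that under this identification $\Gamma$ is the invariants functor $M \mapsto M^H$. Since $R$ is Noetherian and $R \to A$ is of finite type by hypothesis, the only remaining input needed for Theorem \ref{theorem-finite-type} is axiom (A).

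Next I would observe that axiom (A) for $\text{Mod}_H$ is exactly the adequacy hypothesis on $H$. A ring object $\mathcal{A}$ of $\text{Mod}_H$ is the same thing as an $H$-algebra, and it is weakly commutative precisely when $F(\mathcal{A})$ is commutative, i.e.\ when the $H$-algebra is commutative; moreover, because $F$ is exact and faithful, the notion of locally finite object from Definition \ref{definition-locally-finite} unwinds to the filtered-colimit-of-finite-type condition on $H$-modules. Under these identifications, axiom (A) becomes word for word Definition \ref{definition-bialgebra-adequate}, so adequacy of $H$ supplies axiom (A); this is already the content of Lemma \ref{lemma-adequate-bialgebra-equivalences}.

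With all hypotheses of Theorem \ref{theorem-finite-type} in place, I would apply it to the ring object $\mathcal{A}$ corresponding to $C$: it is weakly commutative since $C$ is commutative, of finite type since $C$ is finitely generated as an $A$-algebra so that $F(\mathcal{A}) = C$ is a finitely generated $A$-algebra, and locally finite by hypothesis. The theorem then yields property $(\star)$ for $\mathcal{A}$, namely that $\Gamma(\mathcal{A}) = C^H$ is a finite type $R$-algebra and that $\Gamma(\mathcal{F}) = M^H$ is a finite $C^H$-module for every finite type module object $\mathcal{F}$ over $\mathcal{A}$. Since a finite type $\mathcal{A}$-module is precisely a finite type $C$-module (finiteness being tested on $F(\mathcal{F})$ over $F(\mathcal{A}) = C$), this is exactly the assertion of the corollary.

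The only genuine work here is bookkeeping rather than a real obstacle: one must confirm that the dictionary between $H$-module language and the abstract categorical language is faithful, i.e.\ that module objects over $\mathcal{A}$ coincide with $C$-modules compatibly with the $H$-action, that the two notions of ``finite type'' match, and that $\Gamma = (-)^H$ intertwines them. All of this is immediate from the explicit descriptions in Definition \ref{definition-bialgebra} and Lemma \ref{lemma-bialgebra} together with exactness and faithfulness of $F$, so no further argument beyond citing Theorem \ref{theorem-finite-type} is actually required.
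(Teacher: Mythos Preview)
Your proposal is correct and is precisely the approach the paper takes: its proof reads ``This is Theorem \ref{theorem-finite-type}.'' The bookkeeping you spell out (identifying $\Gamma$ with $(-)^H$, matching the abstract axioms with the $H$-module notions, and checking that $C$ gives a finite type, locally finite, weakly commutative ring object) is exactly the implicit content of that one-line proof.
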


\begin{proof}
This is Theorem \ref{theorem-finite-type}.
\end{proof}

\begin{remark}  If $R = A = k$ where $k$ is a field, then \cite{kalniuk-tyc} define a
 Hopf algebra $H$ over $k$  to be \emph{geometrically reductive} if any finite 
 dimensional $H$-module 
$M$ and any non-zero homomorphism of $H$-modules $N \rightarrow k$ 
there exist $n > 0$ such that $\text{Sym}^n_H(N)^H \rightarrow k$ is non-zero.  
By Lemma \ref{lemma-adequate-bialgebra-equivalences}, 
$H$ is geometrically reductive if and only if $H$ is adequate. 

\medskip \noindent
In \cite[Theorem 3.1]{kalniuk-tyc}, Kalniuk and Tyc prove that with the 
hypotheses of the above corollary and with $R = A = k$ is a field, 
$C^H$ is finitely generated over $k$.  
\end{remark}

\section{Coadequacy for a bialgebra}

\noindent
Let $R \to A$ be map of commutative rings.
Let $H$ be a bialgebra over $A$.
An \emph{$H$-coalgebra} is an $H$-comodule $C$ which is an algebra 
over the algebra structure on $H$ such that $A \to C$ and 
$C \otimes_A C \to C$ are $H$-comodule homomorphisms;  
$C$ is \emph{commutative} if $C$ is commutative as an algebra.  
An $H$-comodule $M$ is \emph{locally finite} if it is the 
filtered colimit of finite type $H$-comodules.

\medskip \noindent
Here we reinterpret the adequacy axiom (A) 
for the category $\text{Comod}_H$.
\begin{definition} 
\label{definition-bialgebra-coadequate}
Let $R \to A$ be map of commutative rings.
Let $H$ be a bialgebra over $A$.
We say that $H$ is \emph{coadequate} if for every surjection 
of commutative $H$-coalgebras $C \to D$ with $C$ locally finite, 
and any $f \in D_H$, there exists an $n > 0$ 
and an element $g \in C_H$ such that $g \mapsto f^n$ in $D_H$.
\end{definition}

\noindent
Recall that we only know that axiom (S) holds for 
$\text{Comod}_H$ when $H$ is commutative.

\begin{lemma}
\label{lemma-coadequate-bialgebra-equivalences}
Let $R \to A$ be map of commutative rings.
Let $H$ be a commutative bialgebra over $A$.
The following are equivalent:
\begin{enumerate}
\item $H$ is adequate.
\item For every surjection of finite type $H$-modules
$N \to M$ and $f \in M^H$, there
exists an $n > 0$ and a $g \in (\text{Sym}_H^n N)^H$
such that $g \mapsto f^n$ in $(\text{Sym}_H^n M)^H$.
\end{enumerate}
If $A$ is Noetherian, then the above are also equivalent to:
\begin{enumerate} \setcounter{enumi}{2}
\item 
For every surjection of finite type $H$-modules
$N \to A$ and $f \in A$, there
exists an $n > 0$ and a $g \in (\text{Sym}_H^n N)^H$
such that $g \mapsto f^n$ in $A $.
\end{enumerate}
\end{lemma}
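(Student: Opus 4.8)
The plan is to deduce this exactly as Lemma~\ref{lemma-adequate-bialgebra-equivalences} was deduced: namely, to apply Lemma~\ref{lemma-adequate-equivalences} to the categorical datum attached to $\text{Comod}_H$ in Definition~\ref{definition-bialgebra} and Lemma~\ref{lemma-bialgebra}(2). First I would check that the hypotheses of Lemma~\ref{lemma-adequate-equivalences} are met. By Lemma~\ref{lemma-bialgebra}(2) the system $(R \to A, \text{Comod}_H, \otimes_A, F, \gamma, \mathcal{O}, \mu)$ is an instance of Situation~\ref{situation-category}, axiom (D) holds, and axiom (C) holds because $H$ is commutative; by Lemma~\ref{lemma-symmetric}, (C) forces axiom (S). This is the single place the commutativity hypothesis on $H$ is used, and it is precisely what is needed: without (S) there is no object $\text{Sym}^n_H N$ at all, so conditions (2)--(3) could not even be formulated. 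If in addition $A$ is Noetherian, then axiom (N) holds.

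Next I would spell out the dictionary translating the present conditions into those of Lemma~\ref{lemma-adequate-equivalences}. Since $F$ is the forgetful functor, the global sections functor $\Gamma$ on $\text{Comod}_H$ is the coinvariants functor of Lemma~\ref{lemma-bialgebra}(2); a weakly commutative ring object of $\text{Comod}_H$ is exactly a commutative $H$-coalgebra; a surjection of ring objects is a surjection of commutative $H$-coalgebras; a surjection of finite type objects is a surjection of finite type $H$-comodules; and the categorical notion of locally finite object from Definition~\ref{definition-locally-finite} agrees with the notion of locally finite $H$-comodule recalled above, since the forgetful functor commutes with filtered colimits. Under this dictionary, condition~(1) here (coadequacy, Definition~\ref{definition-bialgebra-coadequate}) is condition~(3) of Lemma~\ref{lemma-adequate-equivalences}, condition~(2) here is condition~(1) there, and condition~(3) here is condition~(2) there.

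With the translation in place the conclusion is formal. Because axiom (D) holds, Lemma~\ref{lemma-adequate-equivalences} yields the equivalence of its conditions (1) and (3), i.e.\ the equivalence of (2) and (1) here, with no Noetherian hypothesis; when $A$ is Noetherian axiom (N) holds, so Lemma~\ref{lemma-adequate-equivalences} also yields the equivalence of its conditions (1) and (2), i.e.\ of (2) and (3) here, and the three conditions are then all equivalent. The one step demanding genuine care is the verification that axiom (S) holds for $\text{Comod}_H$ when $H$ is commutative --- that is, that the universal quotient $N^{\otimes n} \to \text{Sym}^n_H N$ exists; it can be built as in the case (C) argument of Lemma~\ref{lemma-symmetric}, as the cokernel of the difference of the identity and the maps coming from the $S_n$-action on $N^{\otimes n}$. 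This is the main obstacle; everything else is bookkeeping through the faithful exact functor $F$.
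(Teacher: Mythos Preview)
Your proposal is correct and follows exactly the paper's approach: the paper's proof is the single line ``This is Lemma~\ref{lemma-adequate-equivalences},'' and you have simply unpacked the dictionary and verified the hypotheses (in particular that (C), hence (S), holds for $\text{Comod}_H$ when $H$ is commutative). Your reading of condition~(1) as coadequacy in the sense of Definition~\ref{definition-bialgebra-coadequate} (and of the objects as $H$-comodules with coinvariants) is the intended one; the wording in the displayed statement is a copy--paste artifact from Lemma~\ref{lemma-adequate-bialgebra-equivalences}.
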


\begin{proof}  
This is Lemma \ref{lemma-adequate-equivalences}.
\end{proof}

\begin{corollary}
 \label{corollary-comodule-finite-type}
Let $R \to A$ be a finite type of commutative rings where $R$ is Noetherian.  
Let $H$ be an adequate bialgebra over $A$.  
Let $C$ be a finitely generated, locally finite,
 commutative $H$-coalgebra. 
Then $C_H$ is a finitely generated $R$-algebra 
and for every finite type $C$-module $M$, the 
$C_H$-module $M_H$ is finite.
\end{corollary}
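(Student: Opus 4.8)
The plan is to obtain this as an immediate instance of Theorem~\ref{theorem-finite-type}, once $C$ is recast as a ring object in the categorical framework attached to the category of $H$-comodules. First I would recall, from Lemma~\ref{lemma-bialgebra}(2), that $\text{Comod}_H$ together with the tensor product $\otimes_A$, the forgetful functor $F : \text{Comod}_H \to \text{Mod}_A$, and the structure object $\mathcal{O} = A$ forms an instance of Situation~\ref{situation-category}; that the associated functor $\Gamma$ is the coinvariants functor $M \mapsto M_H$; and that axiom (D) holds. Note that axiom (S) is \emph{not} needed here, which is fortunate since it is not available for comodules over a general bialgebra: Theorem~\ref{theorem-finite-type} requires only (A) and (D).

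Next I would verify the hypotheses of Theorem~\ref{theorem-finite-type}. Conditions (1) and (2)---$R$ Noetherian and $R \to A$ of finite type---are assumed. Axiom (D) was just recorded. Axiom (A) for the category $\text{Comod}_H$ is, unwinding Definition~\ref{definition-bialgebra-coadequate}, precisely the statement that $H$ is coadequate: a surjection of weakly commutative ring objects of $\text{Comod}_H$ is the same as a surjection of commutative $H$-coalgebras (since $F$ is the forgetful functor, ``weakly commutative'' means commutative as an $A$-algebra), and local finiteness matches up as below. So I would read the adequacy hypothesis on $H$ in the corollary as coadequacy, i.e. axiom (A) for $\text{Comod}_H$; then all hypotheses of Theorem~\ref{theorem-finite-type} are in force.

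It then remains to match the remaining data. The object $C$ is a ring object of $\text{Comod}_H$ because an $H$-coalgebra is by definition an $H$-comodule carrying a compatible $A$-algebra structure. It is of finite type as a ring object since $F(C) = C$ is a finitely generated $A$-algebra; it is weakly commutative since $F(C) = C$ is commutative; and it is locally finite since $C$ is a filtered colimit of finite type $H$-comodules and such colimits in $\text{Comod}_H$ are computed on the underlying $A$-modules, so $F$ preserves them---this is exactly Definition~\ref{definition-locally-finite}. Similarly, a finite type $C$-module $M$ in the sense of the corollary is a finite type module object over the ring object $C$, because $F(M) = M$. Applying Theorem~\ref{theorem-finite-type} to $\mathcal{A} = C$ now yields property ($\star$): $\Gamma(C) = C_H$ is a finite type $R$-algebra, and for every finite type $C$-module $M$ the module $\Gamma(M) = M_H$ is finite over $C_H$, which is the claim.

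The real content is entirely in the dictionary between comodule language and the axioms of Situation~\ref{situation-category}, so there is no genuine obstacle; the point I would be most careful about is the identification of ``locally finite $H$-comodule'' with ``locally finite object of $\text{Comod}_H$'', which hinges on the forgetful functor commuting with the filtered colimits in question---true here because colimits of comodules are formed at the level of $A$-modules---together with the (essentially bookkeeping) clarification that the corollary's ``adequate'' should be understood as ``coadequate'' for the comodule category.
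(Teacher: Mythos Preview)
Your proposal is correct and matches the paper's approach exactly: the paper's proof is the single sentence ``This is Theorem~\ref{theorem-finite-type},'' and what you have written is precisely the unpacking of that sentence via Lemma~\ref{lemma-bialgebra}(2) and Definition~\ref{definition-bialgebra-coadequate}. Your observation that the hypothesis ``adequate'' must be read as ``coadequate'' (so that axiom (A) holds in $\text{Comod}_H$) is a genuine clarification of what appears to be a typo in the statement.
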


\begin{proof}
This is Theorem \ref{theorem-finite-type}.
\end{proof}

\bibliography{references}{}
\bibliographystyle{alpha}

\end{document}